  \setlist{topsep=3pt, itemsep=0pt,partopsep=0pt}
\let\p\partial 
\mathchardef\O="24F %
\def\ZZ{\mathbb Z}\def\RR{\mathbb R}
\def\QQ{\mathbb Q}
\def\PP{\mathbb P}\def\AA{\mathbb A}
\let\geL\succeq 
\def\geQ{\geL_{\mathbb Q}} 
\def\simQ{\sim_{\mathbb Q}}
\def\NFun#1#{\NFunto#1}\def\NFunto#1#2{\def#1{\mathop{\rm#2}\nolimits}}
\NFun\Tr{Tr}
\NFun\Hom{Hom}\NFun\Ker{Ker}
\NFun\Pic{Pic}\NFun\Alb{Alb}\NFun\alb{alb}
\NFun\im{im}\NFun\Im{Im}
\NFun\rk{rank}
\NFun\id{id}
\NFun\codim{codim}
\NFun\coker{coker}\NFun\Coker{Coker}
\NFun\Ann{Ann}
\NFun\Der{Der}
\NFun\Spec{Spec}
\NFun\Supp{Supp}
\NFun\calSpec{\mathcal Spec}
\NFun\Proj{Proj}\NFun\bfProj{\mathbf{Proj}}
\NFun\Sym{Sym}\NFun\Lie{Lie}
\NFun\Aut{Aut}\NFun\Isom{Isom}\NFun\varIsom{\underline{Isom}}
\NFun\pdeg{pdeg}
\NFun\Var{Var}
\NFun\Char{char}
\NFun\ord{ord}
\NFun\Sing{Sing}
\NFun\Frob{Frob}
\newtheorem{thm}{Theorem}[section]
\newtheorem{lem}[thm]{Lemma}
\newtheorem{cor}[thm]{Corollary}
\newtheorem{prop}[thm]{Proposition}
\theoremstyle{definition}
\newtheorem{examp}[thm]{Example}%
\newtheorem{ques}{Question}
\newtheorem{rmk}[thm]{Remark}
\theoremstyle{remark}
\newtheorem{assumption}[thm]{Assumptions}
\theoremstyle{plain}
\date{\today}
\author{Jingshan Chen}
\email{chjingsh@hbmzu.edu.cn}
\address{School of Mathematics and Statistics, Hubei Minzu University, Enshi 445000, P.R.China.}
\author{Chongning Wang}
\email{chnwang@mail.ustc.edu.cn}
\address{School of Mathematical Science, University of Science and Technology of China, Hefei 230026, P.R.China.}
\author{Lei Zhang}
\email{zhlei18@ustc.edu.cn}
\address{School of Mathematical Science, University of Science and Technology of China, Hefei 230026, P.R.China.}
\begin{document}
\title[Irregular threefolds with numerically trivial canonical divisor]{Irregular threefolds with numerically trivial canonical divisor}

\begin{abstract}
  In this paper, we classify irregular threefolds with numerically trivial canonical divisors in positive characteristic.
  For a threefold, if its Albanese dimension is not maximal, then the Albanese morphism will induce a fibration which either maps to a curve or is fibered by curves.
  In practice, we treat arbitrary dimensional irregular varieties with either one dimensional Albanese fiber or one dimensional Albanese image.
  We prove that such a variety carries another fibration transversal to its Albanese morphism (a ``bi-fibration'' structure), which is an analog structure of bielliptic or quasi-bielliptic surfaces.
  In turn, we give an explicit description of irregular threefolds with trivial canonical divisors.
\end{abstract}
\maketitle
\tableofcontents

\section{Introduction}\label{sec:intro}
Over the field $\mathbb C$ of complex numbers, Enriques and Kodaira's classification of surfaces with numerically trivial canonical divisors ($K$-trivial), has four basic classes: abelian, bielliptic, K3, and Enriques surfaces. This classification is achieved by considering their Albanese morphisms: for a $K$-trivial surface $X$, the Albanese morphism $a_X\colon X\to A$, according to $\dim a_X(X) = 2,1,0$, is respectively an isomorphism, an elliptic fibration to an elliptic curve, or a trivial morphism. When $\dim a_X(X) = 1$, $X$ has another elliptic fibration $g\colon X\to \mathbb{P}^1$, thus $X$ has a so called bielliptic structure; more precisely, there is an \'etale cover $A'\to A$ such that $X\times_A A' \cong A'\times F$, where $F$ is a general fiber of $a_X$. Bielliptic surfaces were fully classified in \cite{B-DF} (see \cite[List~VI.20]{Beau96}). For higher dimensional $K$-trivial varieties, Bogomolov and Beauville (\cite{Bog75, Beau83} for smooth cases) and Kawamata (\cite[Section~8]{Kaw85} for varieties with canonical singularities) proved that the Albanese morphism $a_X\colon X\to A$ is a fibration and there exists an \'etale covering $A'\to A$ such that $X\times_A A' \cong F\times A'$, where $F$ is a general fiber.
Especially, for an irregular $K$-trivial variety $X$, when $\dim a_X(X) = \dim X$, we have $X\cong A$.
When the Albanese image of $X$ has an intermediate dimension, namely $0<\dim a_X(X)<\dim X$, we can reduce the study of $X$ to the study of lower dimensional $K$-trivial varieties. Here, by an \emph{irregular variety} we mean a normal projective variety $X$ with \emph{irregularity} $q(X):=\dim \mathrm{Pic}^0(X) >0$, or equivalently, the Albanese morphism $a_X\colon X\to A$ is not trivial.

Over a ground field of positive characteristic, Bombieri and Mumford \cite{BM77, BM76} classified $K$-trivial irregular surfaces $X$: if $\dim a_X(X) =2$, then $X=A$; if $\dim a_X(X) =1$, then a general fiber of the Albanese morphism is either an elliptic curve or a rational curve with a cusp, and $X$ has a bielliptic or a quasi-bielliptic structure accordingly.
Note that, in Bombieri and Mumford's classification of (quasi-)bielliptic surfaces, a key step is to show that there is a rational pencil of elliptic curves on $X$ that is transversal to the Albanese morphism of $X$, that is,
$X$ carries a ``bi-fibration'' structure:
$$\xymatrix@R=4ex@C=4ex{&X\ar[d]_{a_X}\ar[r]^>>>>{g} & Z\cong \mathbb{P}^1.\\
&A  &
}$$
Here, ``transversal'' means that a general fiber of $g$ is dominant and finite over $A$ under the morphism $a_X$.

For higher dimensional $K$-trivial varieties in positive characteristic, we know that if $X$ is of maximal Albanese dimension, then $X$ is birational to an abelian variety by \cite{HPZ19}. In the past decade, a series of progresses have been made in understanding the positivity of the direct image of (pluri)canonical sheaves in positive characteristics (\cite{Pat14, Eji17, Eji19}, etc.). The application of the powerful positivity engine to the Albanese morphism $a_X\colon X\to A$ induces some remarkable results for higher dimensional $K$-trivial (or more generally $-K$ nef) varieties. A pivotal result by \cite{PZ19} establishes that if $X$ is weakly ordinary (meaning the Frobenius pullback $F^*\colon H^d(\mathcal{O}_X) \to H^d(\mathcal{O}_X)$ is a $\sigma$-linear isomorphism, or equivalently, $X$ is globally $F$-splitting), then $X$ admits a Beauville-Bogomolov type decomposition. Roughly speaking, there is an isogeny $A' \to A$ such that $X\times_AA' \cong A' \times F$, where $F$ is a general fiber of $a_X$. Recently, Ejiri and Patakfalvi \cite{EP23} prove that when $-K$ is nef, under certain conditions on singularities, the Albanese morphism $a_X$ is surjective, and the intermediate variety $Y$ arising from Stein factorization of $a_X$ is either purely inseparable over $A$ or isomorphic to $A$.
 Later, the authors \cite{CWZ23} prove that, in case $a_X$ is of relative dimension one, $a_X\colon X\to A$ is a fibration. Under additional conditions that $\dim a_X(X)=1$ or that the generic geometric fiber is strongly $F$-regular, in \cite{EP23} and \cite{Eji23} the authors prove that the fibers of $f$ are isomorphic to each other.

 It is worth mentioning that the assumptions of the decomposition theorems above avoid a ``bad phenomenon'' in positive characteristic: the general fiber of the Albanese morphism might have bad singularities, and sometimes it is non-reduced (see for example \cite{Moret-Bailly79} or \cite[Section~3]{Schroer04}).
 A natural question arises:
\begin{ques}
Let $X$ be an irregular $K$-trivial variety. Is the Albanese morphism $a_X\colon X\to A$ a fibration? Does there exist an isogeny $A' \to A$ of abelian varieties such that $X\times_A A' \cong A'\times F$?
\end{ques}

In this paper, we focus on irregular $K$-trivial threefolds and treat this ``bad phenomenon''.
Our approach applies to not only threefolds but also arbitrary dimensional $X$ whose Albanese morphism $a_X \colon X\to A$ satisfies one of the following conditions:
\begin{itemize}
  \item the Albanese image $a_X(X)$ is of dimension one (Section~\ref{sec:str-thms-albdim=1});
  \item $a_X\colon X\to a_X(X)$ is  of relative dimension one (Section~\ref{sec:str-thms-reldim=1}).
\end{itemize}
In both cases we derive a ``bi-fibration'' structure 
\begin{equation}\label{eq:JZBU}
  \vcenter{\xymatrix@R=4ex@C=4ex{&X\ar[d]_{a_X}\ar[r]^{g} & Z \\ &A  & }}
\end{equation}
which is a crucial step to obtain an explicit structure of $X$.

We explain our strategy as follows.
In the first case, $\dim a_X(X) = 1$, we follow the strategy earlier used in \cite{PZ19, EP23, Eji23}: applying the positivity engine to construct a semi-ample divisor $D$ on $X$, which is relatively ample over $A$ and has $\nu(D) = \dim X-1$ (Theorem~\ref{thm:pos-EP}). In the second case, $f=a_X\colon X\to A$ is a fibration with the generic fiber $X_{\eta}$ being a curve of arithmetic genus one, thus we treat the following three cases separately:
\begin{itemize}
  \item[(C1)]
    $X_{\eta}$ is smooth over $k(\eta)$. In this case, $f\colon X\to A$ is an elliptic fibration with $\mathrm{var}(f) = 0$ by \cite[Theorem~2.14]{CZ15}, so we can apply the Isom functor developed in \cite{PZ19}.
\end{itemize}
  If $X_{\eta}$ is not smooth over $k(\eta)$, we can show that there is a natural movable divisor which induces the fibration $g\colon X\to Z$ as follows:
\begin{itemize}
\item[(C2)]
  $X_{\bar{\eta}}$ is reduced but not smooth. Then $a_X\colon X \to A$ is fibred by quasi-elliptic curves, we prove that the divisor supported on the singular locus $\Sigma$ of the fibers is movable as required (Theorem~\ref{thm:hor-map-1}).
\item[(C3)]
  $X_{\bar{\eta}}$ is not reduced, which means that $a_X\colon X \to A$ is inseparable. Then the required movable divisor arises from global sections of  $\Omega_A$ when doing Frobenius base change (Section \ref{sec:base-change}). This should be attributed to Ji and Waldron's observation \cite{JW21} ({see \cite[Proposition~3.4]{CWZ23}}).
\end{itemize}
We see that in the cases (C2) and (C3), the ``bad phenomenon'' that $X_{\bar{\eta}}$ is singular becomes an advantage.
\smallskip
Finally, we apply the two fibrations (\ref{eq:JZBU}) to derive the explicit structure of $X$.

For threefolds, we have a precise description as follows.
\begin{thm}[{=\thinspace Theorem~\ref{thm:3fold}}]\label{thm:main}
  Let $k$ be an algebraically closed field of characteristic $p>0$.
  Let $X$ be a normal $\QQ$-factorial projective threefold over $k$ with $K_X\equiv0$. 
  Denote the Albanese morphism of $X$ by $a_X\colon X\to A$, and assume $\dim a_X(X) >0$.
  Then the following statements hold.
  \begin{itemize}
    \item[\rm(A)] If $\dim a_X(X) =3$, then $X=A$.
    \item[\rm(B)] If $\dim a_X(X) =1$, under the condition that 
      \begin{itemize}
        \item either {\rm(i)} $X$ is strongly $F$-regular and $K_X$ is $\mathbb Z_{(p)}$-Cartier;
        \item or {\rm(ii)} $X$ has at most terminal singularities and $p \geq 5$,
      \end{itemize}
      then $a_X$ is a fibration and there exists an isogeny of elliptic curves $A'\to A$, such that  $X\times_A A' \cong A'\times F$, where $F$ is a general fiber of $a_X$. More precisely, $X\cong A'\times F/H$,
      where $H$ is a finite group subscheme of $A'$ acting diagonally on $A'\times F$.
    \item[\rm(C)] If $\dim a_X(X) =2$, then $a_X$ is a fibration and $X$ falls into one of the following three cases:
    \begin{itemize}
      \item[\rm(C1)] the generic fiber $X_{\eta}$ of $a_X$ is smooth.
        Then there exists an isogeny of abelian surfaces $A'\to A$, such that  $X\times_A A' \cong A'\times E$, where $E$ is an elliptic curve appearing as a general fiber of $a_X$.
        More precisely, $X\cong A'\times E/H$, where $H$ is a finite group subscheme of $A'$ acting diagonally on $A'\times E$, with a complete classification as in Section~\ref{sec:explicite-of-C1}.
      \item[\rm(C2)] $X_\eta$ is non-smooth but geometrically reduced. Then $p=2$ or $3$.
    Denote by $F_{A/k}\colon A_1:=A^{(-1)}\to A$ the relative Frobenius over $k$, $X_1$ the normalization of $X \times_{A} A_1$ and $f_1\colon X_1\to A_1$ the induced morphism.
   Then $f_1\colon X_1\to A_1$ is a smooth fibration fibred by rational curves, which falls into one of the following specific cases:
      \begin{itemize}
        \item[\rm(2.1)] In this case, $f_1\colon X_1\to A_1$ admits a section, and
        \begin{itemize}
          \item[\rm(2.1a)] either $X_1\cong \mathbb{P}_{A_1}(\mathcal{O}_{A_1}\oplus \mathcal{L})$, where $\mathcal{L}^{\otimes p+1}\cong \mathcal{O}_X$; or
          \item[\rm(2.1b)] $X_1\cong \mathbb{P}_{A_1}(\mathcal{E})$, where $\mathcal{E}$ is a unipotent vector bundle of rank two, and
          there exists an \'etale cover $\mu\colon A_2\to A_1$ of degree $p^v$ for some $v\le 2$ such that $\mu^*F^{(2-v)*}_{A_1/k}\mathcal{E}$ is trivial.
        \end{itemize}
        \item[\rm(2.2)] In this case, $p=2$, and there exists a purely inseparable isogeny $A_2 \to A_1$ of degree two, such that $X_2 :=X_1\times_{A_1}A_2$ is a projective bundle over $A_1$ described as follows
        \begin{itemize}
          \item[\rm(2.2a)] $X_2\cong \mathbb{P}_{A_2}(\mathcal{O}_{A_2}\oplus \mathcal{L})$, where $\mathcal{L}^{\otimes 4}\cong \mathcal{O}_{A_2}$; or
          \item[\rm(2.2b)] $X_2\cong \mathbb{P}_{A_2}(\mathcal{E})$, where $\mathcal{E}$ is a unipotent vector bundle of rank two, and there exists an \'etale cover $\mu\colon A_3\to A_2$ of degree $p^v$  for some $v\le 2$ such that $\mu^*F^{(2-v)*}_{A_1/k}\mathcal{E}$ is trivial.
        \end{itemize}
      \end{itemize}
    \item[\rm(C3)] $X_{\eta}$ is not geometrically reduced.
      In this case, we also have $p=2$ or $3$.
      Let $X_1$ be the normalization of $(X \times_{A} A_1)_{\rm red}$, where $A_1:=A^{(-1)}\to A$ is the relative Frobenius. Then the projection $X_1 \to A_1$ is a smooth morphism, and either
      \begin{itemize}
        \item[\rm(3.1)] $X_1=A_1\times\mathbb P^1$ and $X=A_1\times\mathbb P^1/\mathcal F$ for some smooth rank~one foliation $\mathcal F$ which is described concretely in Section~\ref{sec:descrip-C31}; or
        \item[\rm(3.2)] $p=2$, and there exists an isogeny of abelian surfaces $\tau\colon A_2 \to A_1$ such that $X_1\times_{A_1}A_2 \cong A_2\times\mathbb P^1$, where either
        \begin{itemize}
          \item[\rm(3.2a)] $\tau\colon A_2 \to A_1$ is an \'etale of degree two; or
          \item[\rm(3.2b)] $\tau=F_{A_1/k}\colon A_2 := A^{(-2)} \to A_1$ is the relative Frobenius.
        \end{itemize}
      \end{itemize}
    \end{itemize}
  \end{itemize}
\end{thm}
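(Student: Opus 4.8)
The plan is to organise the argument by the value of $q(X)\in\{1,2,3\}$, reducing each case to one mechanism: since $K_X\equiv 0$ and $\omega_A\cong\mathcal O_A$, one has $K_{X/A}\equiv 0$, and combined with the positivity theorems of \cite{Pat14,Eji17,Eji19} applied to $(a_X)_*\omega_{X/A}^{[m]}$ this forces the family of fibres of $a_X$ to be isotrivial after an isogeny base change; once trivialised, $X$ is recovered as the quotient of a product by the kernel of the isogeny acting diagonally, and for threefolds one then enumerates the possible fibres and actions. First one records that $a_X\colon X\to A$ is a fibration: by \cite{EP23} it is surjective and its Stein factorisation $X\to Y\to A$ has $Y\to A$ an isomorphism or purely inseparable (hence a universal homeomorphism), so the geometric fibres are connected, and in relative dimension one \cite{CWZ23} gives that $a_X$ itself is a fibration. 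For (A), $q(X)=3$ forces $\dim a_X(X)=3$, so $X$ has maximal Albanese dimension; by \cite{HPZ19} the morphism $a_X$ is then birational, and since $K_X\equiv 0$ no curve in a fibre of $a_X$ is $K_X$-negative, so $a_X$ has no positive-dimensional fibre, and a finite birational morphism onto the normal variety $A$ is an isomorphism.

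For (B), $q(X)=1$, so $A$ is an elliptic curve and $\dim a_X(X)=1$. One first applies the positivity engine (Theorem~\ref{thm:pos-EP}) to produce a semiample divisor $D$, relatively ample over $A$, with $\nu(D)=2$, yielding a fibration transversal to $a_X$. Under hypothesis (i), $X$ is strongly $F$-regular and so is its generic fibre; under hypothesis (ii), a general fibre is a smooth $K$-trivial surface and $p\geq 5$ makes the relevant minimal model and $F$-singularity theory available. In either case \cite{EP23,Eji23} apply, numerical triviality of $K_{X/A}$ forces $a_X$ to be isotrivial, and the $\Isom$-functor argument of \cite{PZ19} produces an isogeny $A'\to A$ with $X\times_A A'\cong A'\times F$. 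Its descent datum transports to an action of $H:=\ker(A'\to A)$ on $A'\times F$ that projects to translation on $A'$, hence is diagonal, giving $X\cong(A'\times F)/H$.

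For (C), $q(X)=2$. One first shows $\dim a_X(X)=2$: were the image a curve $C$ generating $A$, then $C$ would have genus $\geq 2$ and $a_X$ would factor through a fibration over $C$ with two-dimensional fibres, whence $(a_X)_*\omega_{X/C}^{[m]}$ would have negative degree, contradicting its weak positivity. So $a_X$ has relative dimension one, and by adjunction $\omega_{X_\eta}\cong\mathcal O_{X_\eta}$, i.e.\ $p_a(X_\eta)=1$. We then split according to $X_\eta$:
\begin{itemize}
  \item[\rm(C1)] $X_\eta$ smooth: $a_X$ is an isotrivial elliptic fibration by \cite{CZ15}*{Theorem~2.14}, so as in (B) the $\Isom$ functor of \cite{PZ19} gives an isogeny $A'\to A$ with $X\times_A A'\cong A'\times E$ and $X\cong(A'\times E)/H$; it remains to list the diagonal $H$-actions, done in Section~\ref{sec:elliptic-3fold}.
  \item[\rm(C2)] $X_{\bar\eta}$ reduced but singular: $a_X$ is fibred by quasi-elliptic curves, so $p\in\{2,3\}$, and by Theorem~\ref{thm:hor-map-1} the divisor $\Sigma$ on the singular loci of the fibres is movable and defines a transversal fibration. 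Passing to $X_1:=(X\times_A A^{\frac1p})^{\nu}$ resolves the cusps, so $f_1\colon X_1\to A_1=A^{\frac1p}$ is a smooth $\PP^1$-bundle (a priori a Severi--Brauer scheme); computing $K_{X_1}$ from $K_X\equiv 0$ via the ramification formulas for the base change and for the normalisation shows that $K_{X_1}$ is $\QQ$-linearly a combination of the cuspidal section and a fibre, and imposing numerical triviality together with vanishing of the $\Pic^0$-part pins the bundle down: either $\PP_{A_1}(\mathcal O\oplus\mathcal L)$ with $\mathcal L$ torsion of the predicted order, or $\PP_{A_1}(\mathcal E)$ with $\mathcal E$ unipotent, the latter becoming Frobenius-trivial after an \'etale cover by the structure theory of unipotent bundles on abelian varieties. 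Whether $f_1$ admits a section separates (2.1) from (2.2).
  \item[\rm(C3)] $X_\eta$ not geometrically reduced: $a_X$ is inseparable, again $p\in\{2,3\}$, and following \cite{JW21} (cf.\ \cite{CWZ23}*{Proposition~3.3}) the transversal fibration comes from global sections of $\Omega_A$ after Frobenius base change. With $X_1:=(X\times_A A^{\frac1p})_{\mathrm{red}}^{\nu}$, the projection $X_1\to A_1$ is smooth and $X\cong X_1/\mathcal F$ for a rank-one $p$-closed foliation $\mathcal F$ on $X_1$; classifying such $\mathcal F$ on $A_1\times\PP^1$, together with a further isogeny when $p=2$, gives (3.1) and (3.2).
\end{itemize}

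I expect the real difficulty to lie in (C2) and (C3). In (C2) one must track carefully how Frobenius base change, normalisation and passing to the reduced structure affect $K_X$, classify the unipotent $\PP^1$-bundles over abelian surfaces together with their torsion invariants, and descend the bundle structure through the purely inseparable (resp.\ \'etale) isogenies back to $X$. In (C3) the analogous classification of rank-one $p$-closed foliations on $A_1\times\PP^1$, and the descent of the product structure down the inseparable isogeny to recover $X$ with its explicit finite group-scheme action, is where the essential positive-characteristic input is concentrated; by contrast (A), (B) and (C1) reduce quickly to isotriviality plus the $\Isom$-functor and descent package.
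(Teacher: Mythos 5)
Your proposal follows the same architecture as the paper: the trichotomy on $q(X)$ (equivalently on $\dim a_X(X)$), the positivity engine producing a transversal semi-ample divisor when the Albanese image is a curve, the $\Isom$-functor for the smooth elliptic case, the singular locus of the fibres in the quasi-elliptic case, and the sections of $\Omega_{A^{1/p}/A}$ after Frobenius base change in the inseparable case. There is, however, one genuine gap. In (B) and (C1) you write that the $H$-action on $A'\times F$ ``projects to translation on $A'$, hence is diagonal''. This is a non sequitur: a priori $h$ acts by $(a,f)\mapsto(a+h,\phi_h(a)(f))$ with $\phi_h(a)\in\Aut(F)$ possibly varying with $a$, and $\Aut(F)$ is a positive-dimensional group scheme when $F$ is an elliptic curve or an abelian surface, so constancy of $\phi_h$ is not automatic. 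Establishing diagonality is exactly where the paper needs the second, transversal fibration $g\colon X\to Z$: its fibres are elliptic curves finite over $A$, and the Bombieri--Mumford argument (Proposition~\ref{isotrivial-prod}) applied over an open part of $Z$ forces the action to be diagonal there and hence everywhere. Without this step the ``more precisely'' clauses of (B) and (C1) remain unproved.

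Two smaller points. In (A), the claim that $a_X$ has no positive-dimensional fibre because ``no curve in a fibre is $K_X$-negative'' does not follow: $K_X$-trivial curves can be contracted by a birational morphism. The paper instead applies Proposition~\ref{char-abel-var} directly: maximal Albanese dimension gives $K_X\geQ 0$, which together with $K_X\equiv0$ forces $K_X\simQ 0$ and then $X\cong A$. Your sketches of (C2) and (C3) are consistent with Theorems~\ref{thm:C-quasi-ell} and~\ref{thm:C-nonreduced}, but note that smoothness of $f_1$ in (C2) is itself only obtained after the isogeny base change in Step~3 of that proof (it is not an immediate consequence of normalizing the Frobenius base change), and that excluding the extra subcase (2.c) of Theorem~\ref{thm:C-nonreduced} for threefolds requires the $p$-degree bound of Remark~\ref{rmk:pdeg2}, which your outline does not address.
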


\begin{rmk}%
  (1) Theorem~\ref{thm:main} shows that the Albanese fibration $f\colon X\to A$ splits into the product after a sequence of ``normalized'' Frobenius base changes and an étale base change; say \[
  \xymatrix{
    A_2 \times F \ar[r]\ar[d] & X_1\ar[r]\ar[d] & X\ar[d]^f \\ A_2 \ar[r]^{\text{ét}} & A_1 \ar[r]^{F_{A/k}^n} & A \rlap{\,,}
  }
\]
where $F$ is the normalization of a general fiber of $f$, $X_1 = (X\times_A A_1)_{\rm red} ^\nu$ and the left square is Cartesian.
By Remark~\ref{rmk:sep-foliation-lift}, the same holds if we first do an étale base change following by Frobenius base changes, namely we have the following sequence of normalized base changes: \[
  \xymatrix{
    B_1 \times F \ar[r]\ar[d] & X_B\ar[r]\ar[d] & X\ar[d]^f \\ B_1 \ar[r]^{F_{B/k}^n} & B \ar[r]^{\text{ét}} & A \rlap{\,.}
  }
\]

(2) In Case (C), we get a full classification for the cases (C1) and (C3.1), and provide examples for the remaining cases (Section \ref{sec:3-folds}).
\end{rmk}

\subsection{Effectivity of the pluricanonical map of threefolds}%
Over the field of complex numbers, for terminal $K$-trivial threefolds $X$, Kawamata \cite{Kaw86} showed that there exists a positive calculable integer $m_0$ such that $m_0 K_X \sim 0$; and by \cite{Beauville83Rmk,Morrison86} and finally \cite{Oguiso93}, the smallest $m_0$ is $2^5 \cdot 3^3 \cdot 5^2 \cdot 7 \cdot 11 \cdot 13 \cdot 17 \cdot 19$.
It is natural to ask the following question.
\begin{ques}\label{ques:Effectivity}
  Does there exist a positive integer $N$ such that $N K_X \sim 0$ for all terminal $\QQ$-factorial $K$-trivial threefolds over an algebraically closed field of characteristic $p>0$?
\end{ques}

Applying the structure theorem \ref{thm:main}, we can prove the following effectivity result when $\dim a_X(X) = 2$.
\begin{cor}[see Section~\ref{sec:effectivity}]\label{cor:effectivity}
  Let $X$ be a terminal $\QQ$-factorial threefold such that $K_X \equiv 0$ and $q = 2$, then $(2^4 \cdot 3^3) K_X \sim 0$.
\end{cor}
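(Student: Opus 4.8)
The plan is to apply Theorem~\ref{thm:main}: since $q(X)=2$, the Albanese morphism $a_X\colon X\to A$ is a fibration onto an abelian surface whose generic fibre $X_\eta$ is a regular curve of arithmetic genus one, and $X$ is of one of the types (C1), (C2), (C3); I will bound the order of $K_X$ in $\Pic(X)$ in each case. First comes a reduction that applies uniformly. As $\omega_{X_\eta}\cong\mathcal O_{X_\eta}$ one has $K_X=\omega_{X/A}$; putting $\mathfrak L:=(a_X)_*\omega_{X/A}$ (a line bundle on $A$, since $h^0(\omega_{X_\eta})=1$), the natural injection $a_X^*\mathfrak L\hookrightarrow\omega_{X/A}$ is an isomorphism over the generic point, so $K_X=a_X^*\mathfrak L+E$ with $E\geq 0$ an $a_X$-vertical divisor which, by maximality of $\mathfrak L$, contains no full fibre. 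One checks $\mathfrak L$ is nef (standard positivity of pushforwards of relative canonical sheaves; alternatively $\deg(\mathfrak L|_C)=\chi(\mathcal O_{a_X^{-1}(C)})\geq 0$ for a general curve $C\subset A$, using that $a_X^{-1}(C)$ has rational singularities and $\chi\geq 0$ for relatively minimal genus-one fibred surfaces). Intersecting $K_X\equiv 0$ with an ample square then gives $E\cdot H^2=-\mathfrak L\cdot(a_X)_*H^2\leq 0$, so $E=0$ and $\mathfrak L\in\Pic^0(A)=\hat A$; since $a_X^*\colon\Pic^0(A)\hookrightarrow\Pic(X)$, it suffices to bound $\ord(\mathfrak L)$.

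For Case (C1) I would argue directly. Here $A'\times E\to X=(A'\times E)/H$ is an $H$-torsor and $\omega_{A'\times E}\cong\mathcal O$, so $\omega_X$ is the descent of the trivial bundle equipped with the $H$-linearisation given by the character $\chi\in\Hom(H,\mathbb G_m)$ through which $H$ acts on $H^0(\omega_{A'\times E})$, and hence $\ord(K_X)=\ord(\chi)$. Since $H$ acts on $A'$ by translations, $\chi$ is the character of the $H$-action on $H^0(\omega_E)$, and because $\Aut(E,0)$ is \'etale it factors through $H^{\mathrm{red}}\to\Aut(E,0)\to\mathbb G_m(k)$, whose image is cyclic of order prime to $p$ and dividing $|\Aut(E,0)|$. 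Therefore $\ord(\chi)\in\{1,2,3,4,6\}$ and $12\,K_X\sim 0$.

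In Cases (C2) and (C3) one has $p\in\{2,3\}$ and $\mathfrak L^{-1}=R^1(a_X)_*\mathcal O_X$. I would pull back along the flat relative Frobenius $\phi\colon A_1=A^{1/p}\to A$ and pass to the smooth $\mathbb P^1$-bundle $f_1\colon X_1=(X\times_AA_1)^{\nu}_{\mathrm{red}}\to A_1$ furnished by Theorem~\ref{thm:main}, with induced map $\psi\colon X_1\to X$ satisfying $a_X\circ\psi=\phi\circ f_1$. From $0\to\mathcal O_W\to\nu_*\mathcal O_{X_1}\to\mathcal Q\to 0$ for $W=X\times_AA_1$ together with the vanishing $R^1f_{1*}\mathcal O_{X_1}=0$ one obtains $\phi^*\mathfrak L^{-1}\cong(\mathrm{pr}_{A_1})_*\mathcal Q$; in Case (C2) the conductor sheaf $\mathcal Q$ is supported, with generic length one, on the cuspidal section $\Sigma_1\subset X_1$ (equivalently $\psi^*K_X=K_{X_1}+2\Sigma_1$), while in Case (C3) one uses instead $X_1=A_1\times\mathbb P^1$, $X=X_1/\mathcal F$ for a rank-one foliation $\mathcal F$, and $\psi^*K_X=K_{X_1}-(p-1)c_1(\mathcal F)$. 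In either case $\phi^*\mathfrak L\in\Pic^0(A_1)$ is expressed explicitly through $\det\mathcal E$ and $[\Sigma_1]$, or through the $\Pic^0(A_1)$-part of $c_1(\mathcal F)$. Feeding in the finitely many models of Theorem~\ref{thm:main}(C2)--(C3) — $\mathcal E=\mathcal O\oplus\mathcal L$ with $\mathcal L^{\otimes(p+1)}\cong\mathcal O$ (or $\mathcal L^{\otimes 4}\cong\mathcal O$ in case (2.2)), or unipotent $\mathcal E$, and the concrete foliations of Section~\ref{sec:descrip-C31} — shows $\phi^*\mathfrak L$ is torsion of controlled order; tracking $\ord$ back along the base-change tower then gives the claim, because pulling back along the relative Frobenius $A^{1/p}\to A$ multiplies the order by at most $p$ (the kernel of the dual isogeny is killed by $[p]$), an \'etale $p^v$-cover by at most $p^v$, and the auxiliary degree-two inseparable isogeny of case (2.2) by at most $2$. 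Accumulating, $\ord(\mathfrak L)\mid 2^4\cdot 3^3$: the $2$-part is largest in Case (C2), $p=2$, (2.2) (where $\mathcal L^{\otimes 4}\cong\mathcal O$, the degree-two isogeny, and the Frobenius $A^{1/2}\to A$ contribute $2^2\cdot 2\cdot 2$), and the $3$-part in Case (C3), $p=3$ (where the foliation term contributes a $3^2$-torsion class and the Frobenius $A^{1/3}\to A$ a further factor $3$). Hence $(2^4\cdot 3^3)K_X\sim 0$.

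The hard part is Cases (C2)--(2.2) and (C3): one must identify the conductor sheaf $\mathcal Q$ — equivalently the class $[\Sigma_1]$ of the cuspidal section — on each of the explicit $\mathbb P^1$-bundles of Theorem~\ref{thm:main}, and the $\Pic^0$-part of $c_1(\mathcal F)$ from the foliation description, and then control precisely how many factors of $2$ and of $3$ are picked up at each inseparable or \'etale base-change step so that the totals do not exceed $2^4$ and $3^3$. The clean reduction "$K_X=a_X^*\mathfrak L$ with $\mathfrak L\in\hat A$" turns this into a finite, if delicate, computation.
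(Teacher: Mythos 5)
Your overall strategy --- run through the classification of Theorem~\ref{thm:main} for $q=2$ and bound the torsion order of $K_X$ case by case by tracking it through the base-change towers --- is the same as the paper's, and your treatment of (C1) (the character of $H$ on $H^0(\omega_E)$ factors through the \'etale group $\Aut(E,0_E)$, giving order dividing $12$) is essentially the computation behind the paper's citation of the Bombieri--Mumford list. But there is a genuine gap in your handling of (C2) and (C3), which you yourself flag as ``the hard part'' and do not carry out: you propose to identify $\phi^*\mathfrak L$ explicitly through the conductor sheaf, the class of the cuspidal section, and the $\Pic^0$-part of $c_1(\mathcal F)$, and your bookkeeping of where the factors accumulate is wrong. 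You locate the maximal $3$-part in Case (C3) with $p=3$, claiming the foliation contributes a $3^2$-torsion class plus a further factor $3$ from Frobenius; in fact in (C3.1) the morphism $A_1\times\PP^1\to X$ is a single purely inseparable quotient of height one, so triviality of $\pi^*K_X$ there already forces $3K_X\sim0$ --- only a factor of $3$. The case that actually forces the exponent $3^3$ is (C2.1b) with $p=3$: there the tower is the height-one base change $X_1\to X$ (factor $p$) followed by an \'etale cover of degree $p^v$ and $2-v$ further Frobenius pullbacks needed to trivialize the unipotent bundle $\mathcal E$ (factor $p^v\cdot p^{2-v}=p^2$), totalling $p^3=27$. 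Your sketch never isolates this case, so as written the bound $3^3$ is asserted for the wrong reason.

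A second, structural, issue is that the route you propose is much harder than necessary, and your preliminary reduction to $K_X=a_X^*\mathfrak L$ with $\mathfrak L\in\Pic^0(A)$ rests on positivity of $(a_X)_*\omega_{X/A}$, which is exactly the kind of statement that is delicate in positive characteristic for the non-smooth fibrations of cases (C2)--(C3). Neither step is needed. The paper's Lemma~\ref{lem:norm-sec} does all the work: if $\pi\colon Y\to X$ is finite dominant of degree $d$ and $H^0(Y,\pi^*L)\ne0$, then $H^0(X,dL)\ne0$ by taking norms, and if $\pi$ is purely inseparable of height $r$ one may replace $d$ by $p^r$ (since $F_X^r$ factors through $\pi$). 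Theorems~\ref{thm:C-quasi-ell} and~\ref{thm:C-nonreduced} already hand you, in each subcase, a tower $X_n\to\cdots\to X$ of \'etale and height-one purely inseparable morphisms at whose top the pullback of $K_X$ is trivial; multiplying the \'etale degrees by $p$ raised to the total height immediately gives the divisors $(p+1)p$, $p^3$, $4\cdot2^2$, $2^4$, $3$, $4$ in the respective subcases, whose least common multiple with the (C1) bound $12$ is $2^4\cdot3^3$. No identification of the conductor, of the cuspidal section, or of $c_1(\mathcal F)$ is required.
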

For the case $\dim a_X(X) = 1$, we may break the effectivity problem into the following two questions.
\begin{ques}\label{ques:eff}   
  Let $f\colon X \to A$ be a fibration belonging to Case (B) of Theorem~\ref{thm:main}. Denote by $F$ a general fiber of $f$. Then there exists an integer $N_F>0$ such that $N_FK_F \sim 0$.

  (1) Is there a uniform bound of $N_F$? Equivalently, is there a positive integer $N$ such that $NK_F \sim 0$ {holds for every fibration} $f\colon X \to A$ in Case (B)?
  \smallskip

  Let $H$ be a finite subgroup scheme of an elliptic curve. Assume that $H$ acts on $F$ as in Case (B).
  Then there is a natural group homomorphism $H \to \mathrm{GL}(H^0(F, N_FK_F)) \cong \mathbb G_m$; we denote its image by $\overline{H}$, which is a finite group scheme over $k$.

  (2) Is there a uniform bound of the order of $\overline{H}$?
\end{ques}

\subsection{Notation and conventions}
\begin{itemize}
%\item Through out this paper, we work over an algebraically closed field $k$ of characteristic $p>0$.
\item 
  By a \emph{variety} we mean an integral quasi-projective scheme over a field.
  By a \emph{log pair} $(X,\Delta)$, we mean a pair consisting of a variety $X$ and an effective $\QQ$-divisor $\Delta$ such that $K_X+\Delta$ is $\QQ$-Cartier.
  We denote by $\nu\colon X^\nu \to X$ the normalization morphism of a variety $X$.
\item
By a \emph{fibration} we mean a projective morphism $f\colon X \to Y$ of normal varieties such that $f_*\mathcal{O}_X = \mathcal{O}_Y$.
\item
Let $X$ be a normal variety and $D$ a Weil divisor on $X$. Denote by $\mathcal{O}_X(D)$ the reflexive sheaf associated with $D$.
Note that if $D_1$,$D_2$ are Weil divisors, then $\mathcal O_X(D_1 + D_2) \cong (\mathcal O_X(D_1)\otimes\mathcal O_X(D_2))^{\vee\vee}$.
%If $L$ is a line bundle, then $\mathcal O_X(D+L) \cong \mathcal O_X(D) \otimes L$.
\item
For a projective morphism $f\colon X \to Y$ of normal varieties, a Weil divisor $D$ on $X$ is called {\it $f$-exceptional} if $f(\Supp D) \subset Y$ has codimension $\ge2$, {\it $f$-vertical} if $f(\Supp D)$ has codimension $\ge1$, and {\it $f$-horizontal} if each irreducible component of $D$ is dominant over $Y$.
\item
For a morphism $\sigma\colon Z \to X$ of varieties and a divisor $D$ on $X$ such that the pullback $\sigma^*D$ is well defined, we often use $D|_Z$ to denote $\sigma^*D$ for simplicity.
\item
Let $K=\mathbb Z,\mathbb Q$ or $\mathbb R$.
Let $D$ be a $K$-divisors on $X$, namely $D\in N^1(X)\otimes K$. We say that $D$ is effective, with the notation $D\ge0$, if all coefficients are non-negative.
By $D\succeq_K0$, we mean that there exists an effective $K$-divisor $D'$ such that $D'\sim_K D$.
When $K=\mathbb Z$, we also denote $D\succeq_{\mathbb Z}0$ by $D\geL0$ for simplicity.
\item
%Let $E$ be an effective integral divisor on $X$. Then we have an inclusion $\mathcal{O}_X(D) \subseteq \mathcal{O}_X(D+E)$, and we may regard $H^0(X,\mathcal{O}_X(D))$ as a subspace of $H^0(X,\mathcal{O}_X(D+E))$. This space coincides with $H^0(X,\mathcal{O}_X(D))\otimes 1_E$, where $1_E \in H^0(X, \mathcal{O}_X(E))$ denotes the section corresponding to the constant function $1 \in K(X)$.
For every effective integral divisor $E$ on $X$, the inclusion $\mathcal{O}_X(D) \subseteq \mathcal{O}_X(D+E)$ allows us to regard $H^0(X,\mathcal{O}_X(D))$ as a subspace of $H^0(X,\mathcal{O}_X(D+E))$. This space coincides with $H^0(X,\mathcal{O}_X(D))\otimes 1_E$, where $1_E \in H^0(X, \mathcal{O}_X(E))$ denotes the section corresponding to the constant function $1 \in K(X)$.
\end{itemize}

{Throughout this paper, we let $k$ be an algebraically closed field of characteristic $p>0$, and unless otherwise mentioned we assume varieties are defined over $k$.}

\medskip
{\small \noindent\textit{Acknowledgments.}
The authors would like to thank the referee for giving many helpful comments to improve the presentation and the proof.
This research is partially supported by National Key R and D Program of China (No. 2020YFA0713100), CAS Project for Young Scientists in Basic Research (No. YSBR-032) and NSFC (No.12122116 and No. 12471495).
The first author is also supported by Hubei Minzu University (Grant No. XN24040).
}

\section{Preparations}\label{sec:prelim}
In this section, we collect some basic notions and facts which we will use in the sequel.

\subsection{Frobenius morphisms}\label{sec:Frob-mor}%
Let $f\colon X\to \Spec k$ be a variety over $k$.
We denote by $F_X\colon X\to X$ the absolute Frobenius morphism of $X$.
Set $X^{(1)} := X \times_{k,F_k} k$. We denote by $F_{X/k}\colon X \to X^{(1)}$ the {\em relative Frobenius} of $X$ over $k$, which fits into the following commutative diagram
\[
  \xymatrix{
    X \ar@/^8mm/[rr]^{F_X} \ar[r]^{F_{X/k}} \ar[rd]_f & X^{(1)} \ar[d]\ar[r] & X \ar[d]^f \\
                                            & \Spec k \ar[r] ^{F_k} & \Spec k\,.     
  }
\]
Note that since $k$ is perfect, the morphism $X^{(1)} \to X$, though not $k$-linear, is an isomorphism as schemes.
For this reason, we also denote the relative Frobenius by $F_{X/k}\colon X^{(-1)}\to X$.

\subsection{Foliations and purely inseparable morphisms}\label{sec:foliation}

Let $Y$ be a normal variety over $k$, and denote by $\mathcal T_Y:=\Omega_{Y/k}^{\vee}$ the tangent sheaf.
A {\it foliation} on $Y$ is a saturated subsheaf $\mathcal F\subseteq \mathcal T_Y$, which is $p$-closed ($\mathcal F^p \subseteq \mathcal F$) and involutive ($[\mathcal F,\mathcal F] \subseteq \mathcal F$).
The subsheaf $\mathop{\rm Ann} \mathcal F \subseteq \mathcal O_Y$ is a subring containing $\mathcal O^p_Y$, and thus gives a natural morphism $\pi\colon Y \to Y/\mathcal F:=\mathrm{Spec}( \mathop{\rm Ann}\mathcal F)$ over $k$. By the construction, the  relative Frobenius morphism $F_{Y/k}\colon Y \to Y^{(1)}$ factors through $\pi\colon Y \to Y/\mathcal F$, thus $\pi\colon Y \to Y/\mathcal F$ is a purely inseparable morphism of height one.
In fact, there is a one-to-one correspondence (\cite{Eke87} or \cite[Proposition~2.9]{PW22}): $$
    \newcommand\amsatop[2]{\genfrac{}{}{0pt}{}{#1}{#2}}
    \left\{\amsatop{\text{foliations}}{\mathcal F\subseteq \mathcal T_Y}\right\}
    \leftrightarrow
    \left\{\vcenter{\hbox{finite purely inseparable morphisms $\pi\colon Y\to X$}%
                   \hbox{over $k$ of height one with $X$ normal}}\right\}.
$$
which is given by 
\[
  \mathcal F ~\mapsto ~\pi\colon Y \to Y/\mathcal F \text{ \ and \ } \pi\colon Y\to X~\mapsto \mathcal F_{Y/X},
\]
where $\mathcal F_{Y/X}$ is the subsheaf of $\mathcal{T}_Y$ annihilated by $\mathrm{im }(\pi^*\Omega_X^1 \to \Omega_Y^1)$.
Recall (see for example \cite[Proposition~2.10]{PW22}) the following formula 
\begin{equation}\label{eq:pullback-cano}
    \pi^*K_X \sim K_Y - (p-1)\det \mathcal F_{X/Y}\sim K_Y + (p-1)\det \Omega_{X/Y}^1.
\end{equation}

If $Y$ is a smooth variety, we call a foliation $\mathcal F$ on $Y$ a {\it smooth foliation} if $\mathcal F \subseteq \mathcal T_Y$ is a subbundle, namely both $\mathcal F$ and $\mathcal T_Y/\mathcal F$ are locally free.
In this case by \cite[Proposition~2.4]{Eke87} or \cite[Proposition~3.1.9]{MP97}, the quotient $Y/\mathcal F$ is smooth if and only if $\mathcal F$ is a smooth foliation.

\subsection{``Pushing down'' and pullback foliations}\label{sec:etaleLift}

\subsubsection{``Pushing-down'' foliations along a fibration}%
Let $f\colon X\to S$ be a fibration of normal varieties over $k$, and let $\mathcal F$ be a foliation on $X$.
We recall the ``pushing-down'' foliation of $\mathcal F$ constructed in \cite[Section~3.1.1]{CWZ23} as follows.
By results of the previous section, we have the following commutative diagram
$$  \xymatrix{
  X \ar[r]^<<<<<\pi \ar[d]_f & \bar X = X/\mathcal F \ar[d]_{\bar f}\ar[r] &X^{(1)}\ar[d] \\  S \ar[r]^<<<<<<<<\sigma & \bar S  \ar[r] & S^{(1)},
}$$
where $\bar{f}\colon \bar{X}\to \bar{S}$ arises from the  Stein factorization of  $\bar{X} \to X^{(1)} \to S^{(1)}$, and hence $\bar{S}$ is obviously between $S$ and $S^{(1)}$. The purely inseparable morphism $\sigma\colon S \to \bar{S}$ corresponds to a foliation $\mathcal G$ on $S$ such that $\bar{S}=S/\mathcal{G}$. The following is another characterization of $\mathcal{G}$.

\begin{prop}[{\cite[Lemma~3.3]{CWZ23}}]\label{prop:push-foliation}
Let notation be as above.
Assume moreover that $S$ is regular.
Let $\mathcal F \subseteq \mathcal T_X \buildrel\eta\over\to f^*\mathcal T_S$ be the natural homomorphisms.
Then
\begin{itemize}
  \item[\rm(1)] the sheaf $\mathcal G$ is the minimal foliation on $S$ such that $\eta(\mathcal F) \subseteq f^*\mathcal G$ holds generically;
  \item[\rm(2)] if $f$ is separable, then $\bar{f}$ is separable if and only if $\eta\colon \mathcal F\to f^*\mathcal G$ is generically surjective.
\end{itemize}
\end{prop}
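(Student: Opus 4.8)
The plan is to work generically on $S$, where everything becomes a statement about vector bundles and their maps, and then check that the resulting subsheaf of $\mathcal T_S$ is indeed saturated, $p$-closed and involutive so that it corresponds to a genuine finite purely inseparable morphism of height one. Let $S^\circ \subseteq S$ be a dense open over which $f$ is flat with normal fibers and $X^\circ = f^{-1}(S^\circ)$; shrinking further we may assume $\mathcal F$, $f^*\mathcal T_S$ and all the relevant sheaves are locally free on $X^\circ$ and $S^\circ$. The key object is the composite $\mathcal F \hookrightarrow \mathcal T_X \xrightarrow{\eta} f^*\mathcal T_S$, i.e. the ``horizontal part'' of the foliation. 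Note that by construction $\pi$ factors the relative Frobenius $F_{S/k}\circ f$, so on the level of function fields $k(\bar X)$ is a purely inseparable extension of $k(S^{(1)})$ sitting inside $k(X)$, and $\bar S$ is by definition the normalization of $S^{(1)}$ in the field $k(\bar X)\cap k(S)\cdot k(S^{(1)})$ — more precisely $k(\bar S)$ is generated over $k(S^{(1)})$ by those elements of $k(S)$ that become regular-function-coordinates on $\bar X$. The foliation $\mathcal G$ corresponding to $\sigma\colon S\to\bar S$ is exactly $\Der_{k(\bar S)}(k(S))$ inside $\Der_k(k(S)) = \mathcal T_S\otimes k(S)$, saturated inside $\mathcal T_S$.

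For part~(1): a foliation $\mathcal H$ on $S$, corresponding to $S\to S/\mathcal H$, satisfies $\eta(\mathcal F)\subseteq f^*\mathcal H$ generically iff every local vector field in $\mathcal F$, pushed to $f^*\mathcal T_S$, lies in $f^*\mathcal H$; dualizing, this says $f^*(\mathcal H^\perp)\subseteq$ (the annihilator of $\eta(\mathcal F)$ inside $f^*\Omega^1_S$). Equivalently, viewing $\mathcal H$ as $\Der_{k(S/\mathcal H)}(k(S))$, the condition is that every derivation in $\mathcal F$ kills $k(S/\mathcal H)\subseteq k(S)$ after the identification $\mathcal T_X\otimes k(X)\supseteq \mathcal F\otimes k(X)$ maps to $\mathcal T_S\otimes k(X) = (\mathcal T_S\otimes k(S))\otimes_{k(S)}k(X)$. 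But a derivation $\partial$ of $k(X)$ restricts to a derivation of $k(S)$ precisely when $\partial$ preserves $k(S)$; the functions on $S$ that are killed by all of $\eta(\mathcal F)$ are exactly $k(\bar S)$ by the definition of $\bar S$ via Stein factorization (the subfield of $k(S)$ fixed by the derivations coming from $\mathcal F$, intersected with the purely inseparable constraint, is $k(\bar S)$). Hence a foliation $\mathcal H$ works iff $k(S/\mathcal H)\subseteq k(\bar S)$, iff $\mathcal H\supseteq \mathcal G$. This gives both that $\mathcal G$ itself works and that it is the minimal such foliation. The main thing to be careful about here is the bookkeeping between "$\eta(\mathcal F)\subseteq f^*\mathcal H$ holds generically on $X$" and the corresponding statement over $k(S)$: one must pass through the faithfully flat base change $k(S)\to k(X)$ to descend the inclusion, which is where I expect the only real friction.

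For part~(2): assume $f$ is separable. The question is whether $\bar f\colon \bar X\to\bar S$ is separable. Consider the generic point: $k(\bar X)/k(\bar S)$. Since $f$ separable means $k(X)/k(S)$ separable, and $k(\bar X)\subseteq k(X)$ with $k(\bar S)\subseteq k(S)$, separability of $k(\bar X)/k(\bar S)$ is governed by whether the $p$-th power map interacts badly. Translate to sheaves over $X^\circ$: $\bar f$ separable $\iff$ the map $\bar f^*\Omega^1_{\bar S}\to\Omega^1_{\bar X}$ is generically injective $\iff$ (dualizing and using $\pi$ finite purely inseparable of height one, so $\mathcal T_{\bar X}\hookrightarrow \pi_*\mathcal T_X$ etc.) the natural map $\mathcal T_X/\mathcal F \to f^*(\mathcal T_S/\mathcal G) = f^*\mathcal T_{\bar S}$ is generically a surjection onto its expected image, which unwinds to: the composite $\mathcal F\to f^*\mathcal T_S\to f^*(\mathcal T_S/\mathcal G)$ — which is zero by part~(1) — and the induced $\mathcal T_X/\mathcal F\to f^*\mathcal T_S/\eta(\mathcal F)\cdot(\text{stuff})$. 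Cleaner: chase the snake. We have $0\to\mathcal F\to\mathcal T_X\to\mathcal T_X/\mathcal F\to 0$ and want to compare with $f^*(0\to\mathcal G\to\mathcal T_S\to\mathcal T_{\bar S}\to 0)$. Since $\eta(\mathcal F)\subseteq f^*\mathcal G$ generically (part 1), $\eta$ descends to $\bar\eta\colon \mathcal T_X/\mathcal F\to f^*\mathcal T_{\bar S}$ generically, and one checks $\bar f$ is separable $\iff$ $\bar\eta$ is generically surjective $\iff$ $\eta$ together with $f^*\mathcal G$ generates $f^*\mathcal G$, i.e. $\eta(\mathcal F)= f^*\mathcal G$ generically, i.e. $\eta\colon\mathcal F\to f^*\mathcal G$ is generically surjective. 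The hard part here is justifying "$\bar f$ separable $\iff \bar\eta$ generically surjective": this needs that $f$ separable forces the vertical part of the comparison to already be an isomorphism generically (so the only obstruction to $\bar f$ being separable is the horizontal one), which I would prove by passing to the generic fiber of $f$ and noting $X_\eta$ is a normal variety over $k(S)$ on which $\mathcal F|_{X_\eta}$ restricts to the relative foliation $\mathcal F_{X_\eta/\bar X_\eta}$, and smoothness of the generic point combined with $f$ separable pins down the relative differentials. I anticipate this last compatibility — identifying $\bar f$ with the quotient of $X_\eta$ by $\mathcal F|_{X_\eta}$ over the right base — as the technical crux; everything else is linear algebra over function fields.
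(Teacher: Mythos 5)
Your part (1) is correct and is essentially the paper's argument rewritten at the level of function fields: the three facts you need --- that $s\in K(S)$ is annihilated by $\eta(\mathcal F)$ iff $f^*s\in K(\bar X)$, that $K(\bar S)=K(S)\cap K(\bar X)$ because $f$ is a fibration (so $K(S)$ is algebraically closed in $K(X)$), and Jacobson/Ekedahl's order-reversing correspondence --- all hold, and the paper packages the same content as ``if $\langle\mathcal H,s\rangle=0$ then $\langle\mathcal F,f^*s\rangle=0$, hence $h$ factors through $S/\mathcal H$,'' plus the remark that the saturated intersection of foliations is a foliation.

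Part (2) has a genuine gap. The identification $f^*(\mathcal T_S/\mathcal G)=f^*\sigma^*\mathcal T_{\bar S}$ on which your comparison rests is false: for a height-one quotient $\sigma\colon S\to\bar S=S/\mathcal G$ the correct sequence is the four-term one
\[
0\to\mathcal G\to\mathcal T_S\to\sigma^*\mathcal T_{\bar S}\to F_S^*\mathcal G\to 0,
\]
so $\mathcal T_S/\mathcal G$ embeds in $\sigma^*\mathcal T_{\bar S}$ with cokernel of rank $\rk\mathcal G$; the two sheaves do not even have the same rank. With your truncated three-term sequences, the induced map $\bar\eta\colon\mathcal T_X/\mathcal F\to f^*(\mathcal T_S/\mathcal G)$ is generically surjective as soon as $f$ is separable, independently of $\eta|_{\mathcal F}$, so your chain of equivalences would show that $\bar f$ is always separable when $f$ is --- which is false. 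For instance, take $p=2$, $X=\AA^3=\Spec k[x,y,z]$, $S=\AA^2=\Spec k[x,y]$, $f$ the projection, and $\mathcal F=\langle\partial_x+z\partial_y\rangle$: then $\mathcal G=\mathcal T_S$, $K(\bar X)=k(x^2,\,y+zx,\,z)$, $K(\bar S)=k(x^2,y^2)$, and $\bar f$ is inseparable while $\eta\colon\mathcal F\to f^*\mathcal G$ has generic rank $1<2$. (Under the correct reading of the target, the other implication also fails: the composite $\mathcal T_X/\mathcal F\to\pi^*\bar f^*\mathcal T_{\bar S}$ has image inside $f^*(\mathcal T_S/\mathcal G)$, hence is never generically surjective once $\mathcal G\ne0$, even when $\bar f$ is separable.) The condition on $\eta$ enters exactly through the cokernel terms you discarded: in the paper's diagram the rightmost vertical arrow is $F_X^*\mathcal F\to F_X^*f^*\mathcal G$, i.e.\ the Frobenius pullback of $\eta$, and the proof is a five-lemma chase --- $f$ separable makes $\mathcal T_X/\mathcal F\to f^*(\mathcal T_S/\mathcal G)$ generically surjective, whence $\varphi_2\colon\pi^*\mathcal T_{\bar X}\to\pi^*\bar f^*\mathcal T_{\bar S}$ (whose generic surjectivity is equivalent to separability of $\bar f$) is generically surjective iff $F_X^*\eta$ is, iff $\eta$ is. Your proposed fix via the generic fiber does not address this rank mismatch.
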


% Foliations has a natural pullback along a generically finite, separable and dominant morphism.
% We recall the pullback under this assumption.
% We also define a ``pushing down'' of foliations along a fibration.

\subsubsection{Pullback of a foliation}%
Let $\tau\colon Y\to X$ be a generically finite, separable and dominant morphism of normal varieties, and let $\mathcal F\subset \mathcal T_X$ be a foliation on $X$.
We can define the pullback foliation $\mathcal F_Y$ on $Y$ as follows.
The natural homomorphism $\mathcal T_Y \to \tau^*\mathcal T_X$ is generically isomorphic.
So over some open subset of $Y$, $\tau^*\mathcal F$ can be viewed as a subsheaf of $\mathcal T_Y$ under this isomorphism.
We define $\mathcal F_Y$ to be the saturation of $\tau^*\mathcal F$ in $\mathcal T_Y$.
One can check that $\mathcal F_Y$ is a foliation on $Y$.

\begin{lem}[{\cite[Lemma~3.0.7]{Posva24}}]\label{lem:etaleLift}
  Let $\tau\colon Y\to X$ be a finite étale morphism between normal varieties.
  Let $\mathcal F$ be a foliation on $X$ and let $\mathcal F_Y$ be the pullback foliation on $Y$.
  There is an étale morphism $\sigma\colon Y/\mathcal F_Y\to X/\mathcal F$ which give a Cartesian square \[
    \xymatrix@R=3ex{
      Y \ar[r] \ar[d]_{\tau} & Y/\mathcal F_Y \ar[d]^{\sigma}\\ X \ar[r] & X/\mathcal F \rlap.}
  \]
  Moreover, if $\tau\colon Y\to X$ is a Galois covering, then so is $\sigma$.
\end{lem}
\begin{proof}
  The first assertion is \cite[Lemma~3.0.7]{Posva24}. For the second, assume that $G$ is the group of automorphisms of $Y$ over $X$.
  Then there is a commutative diagram \[
    \xymatrix@R=3ex{
      Y/\mathcal F_Y \ar[d]^{\sigma}\ar[r] & Y \ar[r] \ar[d]_{\tau} & (Y/\mathcal F_Y)^{(1)} \ar[d]^{\sigma^{(1)}}\\
      X/\mathcal F \ar[r] & X \ar[r] & (X/\mathcal F)^{(1)} \rlap,
  }\]
  where the composition of the horizontal rows are relative Frobenius morphisms over $k$. 
 %Since the right and the big squares are Cartesian, so is the left square.
  Moreover the left square is Cartesian, i.e., $Y/\mathcal F_Y \cong X/\mathcal F \times_X Y$, thus $G$ acts naturally on $Y/\mathcal F_Y$ which makes $\sigma$ a Galois covering.
\end{proof}

\begin{rmk}\label{rmk:sep-foliation-lift}
  Let $f\colon X\to S$ be a fibration of normal varieties and let $\mathcal F$ be a foliation on $X$.
  If $\tau\colon Y\to X$ is obtained by an étale base change, say $Y = X\times_S \tilde S$ where $\sigma\colon \tilde S\to S$ is finite étale, then $Y/\mathcal F_Y \cong X/\mathcal F \times_{S^{(1)}} \tilde S^{(1)}$ and the diagram
 \begin{equation}\label{eq:MYXX}
    \vcenter{\xymatrix@R3ex@C3ex{
      Y \ar[dd]\ar[dr]^\tau\ar[rr]^{\tilde{\pi}} && Y/\rlap{$\mathcal F_Y$}\ar[dr]^{\tau'}\ar@{-->}[dd]|\hole \\
      & X\ar[rr]^<<<<<\pi \ar[dd]^<<<<f && X/\rlap{$\mathcal F$}\ar[dd]^<<<<{f'} \\
      \tilde{S} \ar@{-->}[rr]|\hole \ar[rd]^\sigma  && \tilde{S}^{(1)} \ar@{-->}[dr]^{\sigma'} \\
                                                    & S \ar[rr]^{F} && S^{(1)} \rlap{\,,}
    }}
  \end{equation}
  is commutative, where $S\to S^{(1)}$ and $\tilde S\to \tilde S^{(1)}$ are the relative Frobenius morphisms over $k$ and $\sigma'$ is the natural étale morphism induced by $\sigma$.
  % Indeed, the composition $Y\to \tilde S \buildrel F\over\to \tilde S^{(1)}$ factors through the morphism $Y/\mathcal F_Y \to \tilde S^{(1)}$.
  % There is a natural morphism $\psi\colon Y/\mathcal F_Y\to X/\mathcal F \times_{S^{(1)}} \tilde S^{(1)}$ defined by the two morphisms $\tau'\colon Y/\mathcal F_Y\to X/\mathcal F$ (using Lemma~\ref{lem:etaleLift}) and $Y/\mathcal F_Y\to \tilde S^{(1)}$.
  % By comparing the degrees, we see that the morphism $\psi$ is birational; and since it is also a homeomorphic map, it is an isomorphism.  
  %  This gives the diagram (\ref{eq:MYXX}).
\end{rmk}

\subsection{A property of fibred varieties under flat base changes}
\begin{lem}\label{lem:flat-base-change-integral}%
  Let $X,S,T$ be quasi-projective normal varieties over an arbitrary field. Let $f\colon X\to S$ be a separable fibration and $\sigma\colon T\to S$ a finite flat morphism.
  Then the fiber product $X\times_S T$ is integral, and it is normal if and only if its conductor divisor is zero.
\end{lem}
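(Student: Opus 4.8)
\emph{Overview.} I would prove the two assertions in turn: first that $W:=X\times_S T$ is integral, and then, having checked that $W$ satisfies Serre's condition $S_2$, that its non-normal locus is precisely the support of the conductor divisor, so that normality of $W$ becomes the vanishing of that divisor.

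\emph{Integrality.} Since $\sigma$ is finite and flat, the base change $\rho\colon W\to X$ is again finite and flat. As $X$ is integral, flatness forces $\operatorname{Ass}(W)$ to map into $\operatorname{Ass}(X)=\{\eta_X\}$, and the scheme-theoretic fibre is $\rho^{-1}(\eta_X)=W\times_X\Spec K(X)=\Spec\bigl(K(X)\otimes_{K(S)}K(T)\bigr)$, using that $T\times_S\Spec K(S)=\Spec K(T)$ because $T\to S$ is finite with $T$ integral. Now $f$ being a \emph{separable} fibration means the generic fibre $X_\eta$ is geometrically integral over $K(S)$ (it is geometrically connected since $f_*\mathcal O_X=\mathcal O_S$, and geometrically reduced by separability), so $K(X)\otimes_{K(S)}K(T)$ is a domain; being finite-dimensional over $K(X)$, it is a field. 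Hence $\operatorname{Ass}(W)$ consists of the single point $\eta_W$ over $\eta_X$, with $\mathcal O_{W,\eta_W}\cong K(X)\otimes_{K(S)}K(T)$ a field; so $W$ is irreducible, has no embedded point, and is generically reduced, hence reduced, hence integral, with $K(W)=K(X)\otimes_{K(S)}K(T)$. I would also note at this stage that $W$ is $S_2$: it is flat over the $S_2$ scheme $X$, and its fibres, being Artinian, are $S_k$ for every $k$.

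\emph{The conductor divisor.} Let $\nu\colon\widetilde W\to W$ be the (finite) normalization, an isomorphism over the dense open normal locus of $W$, and put $\mathcal Q:=\nu_*\mathcal O_{\widetilde W}/\mathcal O_W$, so $\Supp\mathcal Q$ is the non-normal locus of $W$ and $\mathfrak C:=\Ann_{\mathcal O_W}\mathcal Q$ is the conductor ideal. Since $\widetilde W$ is normal (hence $S_2$) and a finite morphism preserves depth, $\nu_*\mathcal O_{\widetilde W}$ is an $S_2$ $\mathcal O_W$-module, and $\mathcal O_W$ is $S_2$ by the previous paragraph; applying the depth inequality $\operatorname{depth}\mathcal Q_w\ge\min\bigl(\operatorname{depth}(\nu_*\mathcal O_{\widetilde W})_w,\operatorname{depth}(\mathcal O_W)_w-1\bigr)$ to $0\to\mathcal O_W\to\nu_*\mathcal O_{\widetilde W}\to\mathcal Q\to0$ shows that a minimal point $w$ of $\Supp\mathcal Q$ with $\operatorname{codim}w\ge2$ would satisfy $\operatorname{depth}\mathcal Q_w\ge1$, contradicting that $\mathcal Q_w$ has finite length. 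Hence $\Supp\mathcal Q$ is empty or pure of codimension one, so it is the support of a well-defined effective divisor on $W$, the conductor divisor $\mathfrak d$.

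\emph{Conclusion and the main obstacle.} If $\mathfrak d=0$ then, there being no codimension-one components and (by the depth argument) no higher-codimension ones, $\Supp\mathcal Q=\emptyset$, so $\nu$ is an isomorphism and $W$ is normal; the converse is immediate. Equivalently: $W$ is $S_2$, so by Serre's criterion $W$ is normal if and only if it is $R_1$, and $\mathfrak d=0$ is exactly the statement that $W$ is regular at every codimension-one point. I expect the crux to be the $S_2$-ness of $W$ in the second step; without it the non-normal locus could acquire components of codimension $\ge2$, and then $\mathfrak d=0$ would only give normality in codimension one rather than normality. The remaining steps are routine bookkeeping with flatness and with the geometric integrality of the generic fibre of a separable fibration.
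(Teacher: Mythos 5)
Your proof is correct and follows essentially the same route as the paper's: show that $K(X)\otimes_{K(S)}K(T)$ is a field (hence $X\times_S T$ is integral), that the fibre product is $S_2$ because it is flat over the normal, hence $S_2$, scheme $X$ with Cohen--Macaulay (Artinian) fibres, and conclude that normality is equivalent to $R_1$, i.e.\ to the vanishing of the conductor divisor; your depth argument making explicit that the non-normal locus of an $S_2$ scheme is purely divisorial is a detail the paper leaves implicit. One small imprecision: geometric connectedness plus geometric reducedness of $X_\eta$ does not by itself make $K(X)\otimes_{K(S)}K(T)$ a domain --- this Artinian ring can be a product of several fields while the fibre remains connected (two components meeting) --- so what you really need is that $K(S)$ is algebraically closed in $K(X)$, which holds because $f$ is a fibration and $X$ is normal and is exactly what the paper invokes via \cite{EGAIV.2}*{Proposition~4.3.2}, combined with separability for reducedness.
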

\begin{proof}
  Consider the tensor product of the function fields $R:= K(X) \otimes_{K(S)} K(T)$.
  Since $f$ is a fibration, $K(S)$ is algebraically closed in $K(X)$, so $\Spec R$ is irreducible by \cite[Proposition~4.3.2]{EGAIV.2}.
  Since $f$ is separable, $\Spec R$ is reduced by \cite[Proposition~4.3.5]{EGAIV.2}.
  Therefore, $\Spec R$ is integral, which implies that $Y := X \times_S T$ is generically integral (in the sense that it becomes integral restricting to certain open subsets of $X,S,T$).
  Since the base change morphism is flat, $Y$ satisfies Serre's condition $(S_2)$ by \cite[Corollary of Theorem~23.3]{MatsumuraCRT}.
  Consequently, $Y$ satisfies $(S_1)+(R_0)$, and thus it is reduced.
  Furthermore, since $Y$ is $(S_2)$ and generically irreducible, it is irreducible overall according to Hartshorne's connectedness lemma (\cite[\href{https://stacks.math.columbia.edu/tag/0FIV}{Tag~0FIV}]{stacks-project}).
  %Therefore, $Y$ is integral and, since it is ($S_2$), it is normal if and only if the conductor divisor is zero.
  Therefore, $Y$ is integral. Since $Y$ is ($S_2$), it is normal if and only if it satisfies ($R_1$) condition, which is equivalent to the condition that the conductor divisor is zero.
\end{proof}

\subsection{Behavior of the relative canonical divisor under purely inseparable base changes}\label{sec:base-change}
We shall frequently encounter the following settings:
\begin{equation}\label{eq:base-change}
  \xymatrix{&Y\ar[r]^<<<<<\nu \ar@/^8mm/[rrr]|{\,\pi\,} \ar[rrd]_g\ar[r] &(X_T)_{\rm red}\ar[r] &X_T\ar[r]\ar[d]^{f_1} &X\ar[d]^{f}\\
  &&  &T\ar[r]^{\tau} &S\rlap{\,,} \\
}\end{equation}
where $X,S,T$ are normal quasi-projective varieties over $k$, $f\colon X \to S$ is a fibration and $\tau\colon T\to S$ is a finite purely inseparable morphism of height one.
For simplicity, we assume that $S$ and $T$ are regular so that all divisors on them are Cartier.
Let us recall the following formulas from \cite{CWZ23}.

(a) If $X_{K(T)}$ is integral, then, by \cite[Proposition~3.5]{CWZ23}, there exists an effective Weil divisor $E$ and a $g$-exceptional $\mathbb Q$-divisor $V$ on $Y$ such that
\begin{equation}\label{eq:compds}
  \pi^*K_{X/S} \simQ K_{Y/T} + (p-1)E +V.
\end{equation}

(b) 
If $S=A$ is an abelian variety, $T=A_1:=A^{(-1)}$ and $\tau\colon A^{(-1)} \to A$ is the relative Frobenius morphism over $k$, 
then $\Omega^1_{Y/X}$ is generically globally generated since $\Omega_{A_1/A}^1=\Omega_{A_1}^1 \to \Omega^1_{Y/X}$ is generically surjective. As a result, $\det (\Omega^1_{Y/X})$ has global sections (see \cite[Proposition~3.4]{CWZ23}).
By (\ref{eq:pullback-cano}), we have
\begin{equation}
  \label{eq:4C7W}
  \pi^* K_X \sim K_Y + (p-1) (E + V),\qquad E\ge 0, V\ge 0,
\end{equation}
where $E$ is a $g$-horizontal divisor and $V$ is a $g$-vertical divisor on $Y$.
If moreover, $f$ is separable, then $\omega_{X_{A_1}}$ is locally free restricted on the generic fiber of $f_1$, and thus $E$ can be chosen such that, on the generic fiber $Y_\eta$ of $g$, $(p-1)E$ coincides with the conductor divisor of the normalization of $Y_\eta$ (see \cite[Theorem~1.2]{PW22}).
If $f$ is inseparable, then $E$ contains a nontrivial movable part (see \cite[Theorem~1.1]{JW21} and \cite[Proposition~3.4]{CWZ23}).
Thus
\begin{equation}
  \label{eq:6QN0}
  \pi^* K_X \sim K_Y + (p-1) (\mathfrak M + \mathfrak F),
\end{equation}
where $\mathfrak M$ is the movable part, and $\mathfrak F$ is the fixed part.

\subsection{Numerical dimension}%

Let $X$ be a normal projective variety over $k$ and $D$ be an $\RR$-Cartier $\RR$-divisor on $X$.
The {\it numerical dimension of $D$} is
\[
  \kappa_\sigma(D)= \max\bigl\{\ell\in\mathbb N\mid \mathop{\rm lim\;inf}_{m\to\infty} (h^0(X,\mathcal O_X(\lfloor mD \rfloor+A))/m^\ell) > 0\bigr\},
\]
where $A$ is an ample divisor on $X$ and $\kappa_\sigma(D)=-\infty$ if no such $\ell$ exists.
If $D$ is nef, we use $\nu(D)$ to denote the largest natural number $j\ge0$ such that the cycle class $D^j$ is not numerically trivial.
Equivalently, $\nu(D)$ is the largest $j$ such that $(D^j\cdot H^{\dim X-j}) \ne 0$ for some ample Cartier divisor $H$ by \cite[Corollary~3.17]{Fulger-Lehmann-2017-Dual}.
Moreover, for any nef divisor $D$, we have $\kappa_\sigma(D)=\nu(D)$ by \cite[Remark~4.6]{CHMS14}.
\smallskip

The following lemma is probably well known to experts. In characteristic zero, it is a consequence of \cite[Proposition~2.1]{Kaw85p}, whose proof needs resolution of singularities. In characteristic $p$, we use smooth alteration instead.
\begin{lem}\label{lem:nu=1}%
  Let $X$ be a normal projective variety, and let $D$ be a nef $\QQ$-Cartier $\QQ$-divisor with $\nu(D) = \kappa(D) = 1$.
  Then $D$ is semi-ample.
\end{lem}

\begin{proof}
  First, we consider the case when $X$ is smooth.
  Since $\kappa(D) =1$, by replacing $D$ with its multiple, we can assume that the linear system $|D|$ contains a movable part. Write that $|D|= |M| + V$ where $|M|$ denotes the movable part and $V$ the fixed part. Since $X$ is smooth, intersections of divisors make sense.
  By restricting on the intersection of $\dim X-2$ general hypersurfaces, since $M$ is movable, from $D^2\equiv M(M+V) + V\cdot D\equiv 0$ we deduce that $M^2 \equiv 0$ and $M\cdot V\equiv 0$. From this we conclude that
  \begin{itemize}
    \item the linear system $|M|$ has no base point, hence it induces a fibration $f\colon X \to B$ where $B$ is a smooth projective curve; and
    \item the fixed part $V$ is contained in finitely many closed fibers of $f$.
  \end{itemize}
  Moreover since $D$ is nef, we see that $V$ is nef, and $V$ must be like $a_1F_1 + \cdots + a_rF_r$ where $F_i$ are closed fibers of $f$ and $a_i \in \mathbb{Q}^+$. In conclusion, there exists an effective $\mathbb{Q}$-divisor $D_B$ such that $D\sim_{\mathbb{Q}}f^*D_B$. Thus $D$ is semi-ample.

  In general, we can take a smooth alteration $\pi\colon Y\to X$ (\cite[Theorem 4.1]{deJ96}), so that $Y$ is smooth, projective and dominant over $X$.
  We see that $\pi^*D$ is nef, and $\nu(\pi^*(D)) = \kappa(Y, \pi^*(D))= \nu(D) =1$ (\cite[Remark~4.6]{CHMS14}). 
  In the previous paragraph, we have proved that $\pi^*D$ is semi-ample on $Y$, which implies that $D$ is semi-ample.
\end{proof}

\subsection{Covering theorem}%
\begin{thm}[{\cite[Theorem~10.5]{Iit82}}]\label{ct}
  Let $f\colon X \rightarrow Y$ be a proper surjective morphism between complete normal varieties over an algebraically closed field.
	If $D$ is a Cartier divisor on $Y$ and $E$ an effective $f$-exceptional divisor on $X$, then
	$$\kappa(X, f^*D + E) = \kappa(Y, D).$$
\end{thm}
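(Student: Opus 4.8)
The plan is to express everything through section rings $R(Z,L) := \bigoplus_{m\ge 0} H^0(Z,\mathcal O_Z(mL))$, using that $\kappa(Z,L) = \mathrm{tr.deg}_k\,\mathrm{Frac}\,R(Z,L) - 1$ when $R(Z,L)\ne k$ (and $\kappa(Z,L) = -\infty$ otherwise), and to reduce the statement in two stages. First I would take the Stein factorization $f = g\circ h$, with $h\colon X\to Y'$ a fibration and $g\colon Y'\to Y$ finite (so $Y'$ is normal since $X$ is). Because $g$ is finite, $E$ remains $h$-exceptional, and the crucial observation is that $h_*\mathcal O_X(mE) = \mathcal O_{Y'}$ for every $m\ge 0$: a local section over $U\subseteq Y'$ is a rational function whose polar set lies in $\Supp E$, hence it is regular on $U$ minus the codimension-$\ge 2$ locus $h(\Supp E)$, hence by $h_*\mathcal O_X = \mathcal O_{Y'}$ and normality of $Y'$ it lies in $\mathcal O_{Y'}(U)$, so it has no poles at all. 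Combining this with the projection formula — legitimate because $mD$, and therefore $g^*mD$ and $f^*mD = h^*g^*mD$, are Cartier — gives $h_*\mathcal O_X(f^*mD + mE) \cong \mathcal O_{Y'}(g^*mD)$ compatibly with multiplication, i.e. a graded-ring isomorphism $R(X,f^*D+E)\cong R(Y',g^*D)$. It therefore suffices to prove $\kappa(Y',g^*D) = \kappa(Y,D)$ for a finite surjective morphism $g$ of normal complete varieties.

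In the finite case $\kappa(Y',g^*D)\ge\kappa(Y,D)$ is immediate, since pulling back sections is injective; this also lets me enlarge $Y'$ freely while proving the reverse inequality. I would first peel off inseparability: factoring $g$ through the normalization $Y''$ of $Y$ in the separable closure of $K(Y)$ inside $K(Y')$, for the purely inseparable piece $Y'\to Y''$ one replaces $Y'$ by a high Frobenius power $(Y'')^{1/p^e}$, which maps finitely onto $Y'$, and uses that $\kappa$ is unchanged by a Frobenius pullback, $\kappa((Y'')^{1/p^e},(F^e)^*L) = \kappa(Y'',p^eL) = \kappa(Y'',L)$. This reduces to $g''\colon Y''\to Y$ separable, and then I would pass to the Galois closure $\widetilde g\colon\widetilde Y\to Y$ with group $G$. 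Now $\widetilde g^*D$ is $G$-invariant, so for $\psi\in H^0(\widetilde Y,m\widetilde g^*D)$ all conjugates $\gamma\psi$ ($\gamma\in G$) lie in the same space; hence the elementary symmetric functions $e_k(\{\gamma\psi\})$ lie in $H^0(\widetilde Y, mk\,\widetilde g^*D)^G = H^0(Y, mkD)$, so $\psi$ is integral over $R(Y,D)$. Thus $R(\widetilde Y,\widetilde g^*D)$ is integral over $R(Y,D)$, so the two fraction fields have the same transcendence degree over $k$; a short check handles the $R=k$ (i.e. $\kappa=-\infty$) case; hence $\kappa(\widetilde Y,\widetilde g^*D) = \kappa(Y,D)$, and the chain $\kappa(Y',g^*D)\le\kappa(Y'',(g'')^*D)\le\kappa(\widetilde Y,\widetilde g^*D) = \kappa(Y,D)$ closes the argument.

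The hard part — and where one must resist a tempting shortcut — is the finite-cover step. Embedding the coherent torsion-free sheaf $g_*\mathcal O_{Y'}$ into $\mathcal O_Y(H)^{\oplus r}$ for a suitable ample $H$ would give $h^0(Y',mg^*D)\le r\,h^0(Y,mD+H)$, but $h^0(Y,mD+H)$ can grow strictly faster than $m^{\kappa(Y,D)}$ (it only controls $\kappa_\sigma$, not $\kappa$ — think of a nef non-semiample divisor on a surface), so this estimate is too weak; the norm / integral-extension argument above is what is actually needed, and it has the pleasant feature of using only properness, never projectivity. The remaining ingredients — normality of the Stein base of a morphism out of a normal variety, existence of the Galois closure in characteristic $p$, and the description of $\kappa$ as a transcendence degree — are standard.
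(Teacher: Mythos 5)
The paper offers no proof of this statement: Theorem~\ref{ct} is quoted verbatim from Iitaka's book (Theorem~10.5 there), so there is nothing in the text to compare your argument against. Judged on its own, your proof is correct and essentially complete, and it is worth having, since Iitaka's book works in characteristic zero while the present paper invokes the result in characteristic $p$; your two characteristic-$p$ insertions (replacing the purely inseparable part of the Stein base by a Frobenius root $(Y'')^{1/p^e}$ and using $\kappa(Z,p^eL)=\kappa(Z,L)$, then passing to a Galois closure and using elementary symmetric functions of the conjugates to exhibit $R(\widetilde Y,\widetilde g^*D)$ as integral over $R(Y,D)$) are exactly what is needed, and the norm argument correctly avoids any averaging over $G$, so it is insensitive to $p$ dividing $|G|$. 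The two reductions are sound: $E$ stays exceptional for the Stein fibration $h$ because $g$ is finite, the extension $h_*\mathcal O_X(mE)=\mathcal O_{Y'}$ via normality of $Y'$ and codimension $\ge2$ of $h(\Supp E)$ is the standard argument, and the projection formula applies because the $f^*mD$ factor is Cartier. Your closing remark about why the naive bound $h^0(Y',mg^*D)\le r\,h^0(Y,mD+H)$ is insufficient (it controls $\kappa_\sigma$, not $\kappa$) is apt. The only ingredients you lean on without proof are themselves standard and appear earlier in Iitaka's Chapter~10: the identity $\kappa(Z,L)=\mathrm{tr.deg}_k\,\mathrm{Frac}\,R(Z,L)-1$ for $R\ne k$, and the descent $H^0(\widetilde Y,N\widetilde g^*D)^G=H^0(Y,ND)$, which for a Weil-divisorial pullback requires the (true) facts that $\mathrm{div}_{\widetilde Y}(\phi)=\widetilde g^*\mathrm{div}_Y(\phi)$ for $\phi\in K(Y)$ and that effectivity of divisors is detected after pullback along a finite surjection of normal varieties; stating these explicitly would make the write-up airtight.
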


\subsection{Adjunction formula and characterization of abelian varieties}%
\begin{prop}[{\cite[Proposition~4.5]{Kollar-Sing} and \cite[Theorem~4.1]{Das15}}]\label{adjunction}
  Let $X$ be a normal variety and $S$ be a prime Weil divisor of $X$.
  Let $S^\nu \to S$ be the normalization. Assume that $K_X + S$ is $\mathbb{Q}$-Cartier.
  Then
  \begin{itemize}
    \item[\rm(1)] There exists an effective $\mathbb Q$-divisor $\Delta_{S^\nu}$ on $S^\nu$ such that
      $$(K_X+ S)|_{S^{\nu}} \simQ K_{S^{\nu}} + \Delta_{S^\nu} .$$
    \item[\rm(2)] Let $V \subset S^\nu$ be a prime divisor, then $\mathrm{coeff}_V \Delta_{S^\nu} = 0$ if and only if $X,S$ are both regular at the generic point of $V$.
    \item[\rm(3)] If the pair $(S^\nu, \Delta_{S^\nu})$ is strongly $F$-regular, then $S$ is normal.
  \end{itemize}
\end{prop}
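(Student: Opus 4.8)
The plan is to treat all three parts as local statements at the codimension-one points of $S^\nu$. Such a point maps to a codimension-two point $x$ of $X$, so after localising at $x$ and cutting with $\dim X-2$ general hyperplane sections — Bertini preserving the normality of $X$, the irreducibility and reducedness of $S$, the $\QQ$-Cartier property of $K_X+S$, and the formation of the different — one reduces to the case that $X$ is a normal surface germ at a point $P$, $S=C$ is a reduced irreducible curve through $P$, $K_X+C$ is $\QQ$-Cartier, and $\pi\colon C^\nu\to C$ is the normalisation. For part (1) I would construct $\Delta_{S^\nu}$ via Grothendieck duality so that effectivity is built in: fixing $m>0$ with $m(K_X+S)$ Cartier and setting $L=\mathcal O_X(m(K_X+S))$, the Poincar\'e residue identifies $L$ canonically with $\omega_X^{[m]}$ over the open locus where $X$ is smooth and $S$ is a smooth Cartier divisor, so $L|_{S^\nu}$ and $\omega_{S^\nu}^{[m]}$ agree on a big open subset of $S^\nu$; pushing this comparison to a genuine $\mathcal O_{S^\nu}$-linear map $\omega_{S^\nu}^{[m]}\to L|_{S^\nu}^{\vee\vee}$ of rank-one reflexive sheaves by means of the trace map of the finite morphism $\pi$ yields, as its vanishing divisor, an effective $\ZZ$-divisor $m\Delta_{S^\nu}$, and one checks in the usual way that $\Delta_{S^\nu}:=\tfrac1m\bigl(m\Delta_{S^\nu}\bigr)$ is independent of $m$ and satisfies $(K_X+S)|_{S^\nu}\simQ K_{S^\nu}+\Delta_{S^\nu}$. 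Effectivity is automatic precisely because this is an honest map of sheaves, not a merely rational one.

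For part (2) I would work in the surface germ. If $X$ is regular at $P$, then $\omega_X(C)|_{C^\nu}\cong\omega_{C^\nu}(\mathfrak c)$ with $\mathfrak c\ge 0$ the conductor divisor of $\pi$, so $\mathrm{coeff}_V\Delta_{S^\nu}=\mathrm{coeff}_V\mathfrak c$, which vanishes exactly when $\pi$ is an isomorphism at the relevant point, i.e.\ when $C$ — and hence both $X$ and $C$ — is regular there. The substantive case is $X$ singular at $P$: here I would pass to the minimal resolution $g\colon Y\to X$ and write $g^*(K_X+C)=K_Y+\widetilde C+\sum_j b_jE_j$ with $\widetilde C$ the strict transform. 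Intersecting with each exceptional curve gives $\bigl(\sum_j b_jE_j\bigr)\cdot E_k=-(K_Y\cdot E_k)-(\widetilde C\cdot E_k)\le 0$ because $K_Y\cdot E_k\ge 0$ on the minimal resolution; negative-definiteness of the exceptional lattice then forces $\sum_j b_jE_j\ge 0$, and it is non-zero since $\widetilde C$ meets the exceptional locus (as $P\in C$), so by connectedness of the dual graph — no edge can join $\{b_j>0\}$ to $\{b_j=0\}$ — one gets $b_j>0$ for every $j$. Restricting this relation to $C^\nu$ and using classical adjunction on the smooth surface $Y$ then gives $\Delta_{S^\nu}=\mathfrak c'+\sum_j b_j\bigl(E_j|_{C^\nu}\bigr)$ with $\mathfrak c'\ge 0$ the conductor of $C^\nu\to\widetilde C$, whose second summand is strictly positive at every point over $P$ since $\widetilde C$ meets some $E_j$ there; hence the coefficient cannot vanish, consistently with ``$X$ and $S$ both regular'' being false.

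For part (3) I would argue by contradiction: if $S$ is not normal, then along the codimension-one part of the non-normal locus the conductor of $\pi$ is non-trivial, so by the computations above $\Delta_{S^\nu}$ has a component of coefficient $\ge 1$ (a node already forces coefficient exactly $1$, and singularities of $X$ only increase it); but a strongly $F$-regular pair $(S^\nu,\Delta_{S^\nu})$ is in particular klt, so $\lfloor\Delta_{S^\nu}\rfloor=0$, a contradiction, whence $S=S^\nu$ is normal. This is the $F$-singularity inversion-of-adjunction recorded in \cite{Das15}*{Theorem~1.4}; alternatively one runs it directly, restricting along $S$ the Frobenius splittings twisted by multiples of $\Delta_{S^\nu}$ furnished by strong $F$-regularity and observing that such a splitting cannot survive a vanishing of order $\ge 1$ along the conductor. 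The step I expect to be the main obstacle is the strict positivity in (2) when $X$ is singular: effectivity from (1) does not suffice, and one genuinely needs the minimal resolution together with the negativity lemma and the connectedness of the exceptional locus to see that every $b_j$ is positive and that $\widetilde C$ must meet one of the corresponding components. Parts (1) and (3) are, by contrast, chiefly a matter of setting up the residue/trace formalism with due care about the non-Cohen--Macaulay locus of $X$ and of quoting the $F$-singularity theory, respectively.
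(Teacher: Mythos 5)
The paper gives no proof of this proposition — it is quoted verbatim from \cite{Kollar-Sing}*{Proposition~4.5} and \cite{Das15}*{Theorem~1.4} — so I am judging your argument on its own terms. Parts (1) and (2) are essentially sound and follow the same route as Koll\'ar's proof: the different is determined at codimension-one points of $S^\nu$, hence by two-dimensional local rings of $X$, where the minimal resolution, the inequality $K_Y\cdot E_k\ge 0$, the negativity lemma and the connectedness of the exceptional fibre give strict positivity of every $b_j$. Two small cautions: the hyperplane-cutting step is both unnecessary and delicate in characteristic $p$ (Bertini for normality can fail) — plain localisation at the codimension-two point already puts you on an excellent normal surface germ, which is all you need; and you should say explicitly that the residue/trace-theoretic different agrees with the divisor you read off from $g^*(K_X+C)=K_Y+\widetilde C+\sum b_jE_j$, since that identification is what lets the resolution computation control the object defined in (1).

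Part (3) has a genuine gap. Your contradiction runs through ``the conductor of $\pi$ is non-trivial along the codimension-one part of the non-normal locus,'' but normality of $S$ is $R_1$ plus $S_2$, and a prime Weil divisor on a normal variety need not be $S_2$. If $S$ is regular in codimension one but fails $S_2$, the non-normal locus has codimension $\ge 2$ in $S$, the conductor ideal has no divisorial part, and neither your klt argument nor your ``splitting cannot survive a vanishing of order $\ge 1$ along the conductor'' variant ever produces a contradiction: both only prove that $S$ is $R_1$. This is precisely why \cite{Das15}*{Theorem~1.4} does not reduce to a coefficient count; the actual argument pushes the Frobenius splittings supplied by strong $F$-regularity of $(S^\nu,\Delta_{S^\nu})$ through the inclusion $\mathcal O_S\hookrightarrow \nu_*\mathcal O_{S^\nu}$ and forces it to be an equality of sheaves, which delivers $R_1$ and $S_2$ simultaneously. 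Separately, even for the $R_1$ part your step ``a node already forces coefficient exactly $1$, and singularities of $X$ only increase it'' is quietly invoking the inequality $\Delta_{S^\nu}\ge$ (divisorial part of the conductor) with its integral coefficients; your surface computation gives positivity of the coefficient but not, without more work, the lower bound $\ge 1$ that the klt contradiction requires.
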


%For a smooth projective variety $X$, it is proved in \cite{HPZ19} that, if $X$ has maximal Albanese dimension and $\kappa(X) = 0$, then $X$ is birational to an abelian variety. This can be generalized as follows.
\begin{prop}[{\cite[Proposition~3.2]{EP23}, see \cite[Proposition~2.9]{CWZ23}}]\label{char-abel-var}
  Let $X$ be a normal projective variety of maximal Albanese dimension, then
\begin{itemize}
  \item[\rm(1)] $K_X \geQ 0$, and
  \item[\rm(2)] if $K_X \simQ 0$, then $X$ is isomorphic to an abelian variety.
\end{itemize}
\end{prop}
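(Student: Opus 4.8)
\textbf{Part (1).}
The plan is to run the positivity engine for direct images of pluricanonical sheaves under morphisms to abelian varieties. Let $a=a_X\colon X\to A$ be the Albanese morphism; since $X$ is of maximal Albanese dimension, $a$ is generically finite onto its image, so $a_*\mathcal O_X(mK_X)$ is a nonzero coherent sheaf on $A$ whenever $mK_X$ is Cartier. Using $\omega_A\cong\mathcal O_A$, hence $K_X\simQ K_{X/A}$, I would invoke the Frobenius-stable positivity/generic-vanishing results available in characteristic $p$ (\cite{Pat14}, \cite{HPZ19}): a normal projective variety of maximal Albanese dimension has Frobenius-stable Kodaira dimension $\kappa_S(X)\ge0$. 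Since $S^0\big(X,\mathcal O_X(mK_X)\big)\subseteq H^0\big(X,\mathcal O_X(mK_X)\big)$, this forces $\kappa(X)\ge\kappa_S(X)\ge0$, i.e.\ $K_X\geQ0$. (Concretely the input is that the relevant positivity makes the cohomological support locus $V^0\big(a_*\mathcal O_X(mK_X)\big)\subseteq\Pic^0(A)$ nonempty; by the projection formula $\mathcal O_X(mK_X)\otimes a^*P$ then has a section for some $P\in\Pic^0(A)$, and taking $P$ torsion and clearing it produces an effective multiple of $K_X$.)

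\textbf{Part (2).}
Assume $K_X\simQ0$, so $\kappa(X)=0$. First I would quote \cite{HPZ19}: $X$ is then birational to an abelian variety $B$. Because $X$ has maximal Albanese dimension and is birational to $B$, one gets $q(X)=\dim X$, $A:=\Alb(X)\cong B$, and — the Albanese morphism of a variety birational to an abelian variety being, up to translation, that birational morphism — $a_X\colon X\to A$ is \emph{birational}. Next I would write $K_X\simQ a_X^*K_A+\Delta$, where $\Delta\ge0$ is the discrepancy divisor: its coefficient along an $a_X$-exceptional prime divisor $E$ is the discrepancy $a(E,A)\ge1$ (since $A$ is smooth), and it vanishes along non-exceptional divisors. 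From $K_A\simQ0$ and $K_X\simQ0$ we get $\Delta\simQ0$, and an effective $\mathbb Q$-divisor that is $\mathbb Q$-linearly trivial on a normal projective variety must vanish (a multiple $m\Delta$ has $\mathcal O_X(m\Delta)\cong\mathcal O_X$, so the section cutting out $m\Delta\ge0$ is a global function on a projective variety, hence constant). Thus $a_X$ is a small birational morphism. To finish, pick an ample divisor $H$ on $X$: then $H=a_X^*(a_{X*}H)$ with $a_{X*}H$ Cartier ($A$ smooth), and if $a_X$ contracted a curve $C$ we would get $0<H\cdot C=a_{X*}H\cdot(a_X)_*C=0$, a contradiction; hence $a_X$ is finite, and a finite birational morphism onto a normal variety is an isomorphism. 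So $X\cong A$ is an abelian variety.

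\textbf{The hard part.}
The real difficulty is concentrated in Part (1). In characteristic $p$ the classical generic-vanishing proof is unavailable — Kodaira vanishing fails and $a_*\omega_{X/A}$ need not be weakly positive (Moret--Bailly-type examples) — so everything rests on the Frobenius-stable positivity/generic-vanishing package of \cite{HPZ19} (building on \cite{Pat14}), which is what actually produces a genuine, non-Frobenius-stabilized pluricanonical section twisted by a torsion class in $a_X^*\Pic^0(A)$. A further point to keep in mind is that the Albanese morphism may be inseparable onto its image; this does not affect the birational-to-isomorphism step in Part (2), which only uses the \emph{conclusion} of \cite{HPZ19}, but it is why one cannot instead argue via an honest ramification divisor, and it is the reason the finer analyses elsewhere in this paper replace ramification by the purely inseparable base-change formulas of Section~\ref{sec:base-change} (notably \eqref{eq:pullback-cano}).
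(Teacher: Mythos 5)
The paper does not prove this proposition: it is quoted from \cite{EP23}*{Propositions~3.1 and 3.2} and \cite{CWZ23}*{Proposition~2.9}, so there is no internal argument to compare against. Your Part (1) is a reasonable summary of what those sources actually do: the substance is that the Frobenius-stable support locus $V^0$ of $a_{X*}\mathcal O_X(mK_X)$ is nonempty \emph{and contains a torsion point} (the torsion claim is itself a nontrivial piece of the machinery, coming from the stability of $V^0$ under multiplication by $p$ on $\widehat A$), after which clearing the torsion twist gives $K_X\geQ 0$. No complaint there beyond noting that the torsion step deserves to be flagged as part of the input rather than a formal consequence of $V^0\neq\emptyset$.

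Part (2), however, has a genuine gap at the sentence asserting that $a_X$ is birational. Two things go wrong. First, the birational characterization of \cite{HPZ19} is a statement about \emph{smooth} projective varieties, whereas $X$ is merely normal and of arbitrary dimension over a field of characteristic $p$, where resolution of singularities is unavailable; so you cannot simply ``quote \cite{HPZ19}''. Second, even granting that $X$ is birational to an abelian variety $B$, you cannot identify that birational map with the Albanese morphism: for a singular normal variety a rational map to an abelian variety need not extend to a morphism, and the Albanese is \emph{not} a birational invariant of normal varieties (the projective cone over an elliptic curve has trivial Albanese yet is birational to a $\PP^1$-bundle over the curve). Composing the inverse map $B\dashrightarrow X$ with $a_X$ and using Weil extension plus rigidity only yields an isogeny $B\to A$ of degree equal to $\deg a_X$, and there is no formal reason for that degree to be $1$ --- ruling out, say, a purely inseparable $a_X$ is exactly where the difficulty sits, as you yourself observe in your closing remark. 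The cited proofs avoid this reduction entirely: they work directly with the generically finite morphism $a_X$, first forcing it to be finite by playing the section produced in Part (1) against exceptional divisors, and then analyzing the finite (possibly inseparable) part. Once birationality of $a_X$ is granted, the rest of your Part (2) --- discrepancies $\geq 1$ of exceptional divisors over the smooth $A$, the vanishing of an effective $\QQ$-linearly trivial divisor, and the small-plus-descended-ample argument forcing $a_X$ to be finite birational onto a normal variety, hence an isomorphism --- is correct and would be a clean way to finish.
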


%Combining the above two propositions, we obtain the following useful lemma.
We give the following useful lemma which can be proved by the above two propositions.
\begin{lem}\label{lem:adj-charA}%
  Let $X$ be a normal $\QQ$-factorial projective variety.
  Let $D$ be a prime divisor of maximal Albanese dimension, and let $\Delta\ge0$ be a $\QQ$-divisor such that $D \not\subseteq\Supp \Delta$.
  Then
  \begin{itemize}
    \item[\rm(1)] $(K_X + D + \Delta)|_{D^\nu} \geQ 0$;
    \item[\rm(2)] if $(K_X + D+\Delta)|_{D^\nu} \simQ 0$, then $D$ is isomorphic to an abelian variety, $\Delta |_{D}\simQ 0$, and $X$ is regular at codimension-one points of $D$;
    \item[\rm(3)] if $D|_{D^\nu} \geQ 0$, and $(K_X + D + aD + \Delta)|_{D^\nu} \simQ 0$ for some $a\in\QQ_{>0}$, then $D|_{D^\nu} \simQ 0$; and as a consequence of {\rm(2)}, $D$ is isomorphic to an abelian variety, $\Delta |_{D}\simQ 0$, and $X$ is regular at codimension-one points of $D$.
  \end{itemize}
\end{lem}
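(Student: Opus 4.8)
The plan is to reduce everything to adjunction along $D$ together with the characterization of abelian varieties among varieties of maximal Albanese dimension, i.e.\ Propositions~\ref{adjunction} and~\ref{char-abel-var}; since the latter is a statement about projective varieties we may and do assume $X$ is projective. As $X$ is $\QQ$-factorial, $K_X$, $D$ and $\Delta$ are all $\QQ$-Cartier, so every restriction to $D^\nu$ appearing below is well defined; in particular $K_X+D$ is $\QQ$-Cartier, so Proposition~\ref{adjunction}(1) applies to the pair $(X,D)$ and produces an effective $\QQ$-divisor $\Delta_{D^\nu}\ge0$ on $D^\nu$ with $(K_X+D)|_{D^\nu}\simQ K_{D^\nu}+\Delta_{D^\nu}$. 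Since $D\not\subseteq\Supp\Delta$ and $\Delta\ge0$, the pullback $\Delta|_{D^\nu}$ is an effective $\QQ$-divisor, whence
\[
  (K_X+D+\Delta)|_{D^\nu}\simQ K_{D^\nu}+\Delta_{D^\nu}+\Delta|_{D^\nu}.
\]
Because $D$ is of maximal Albanese dimension, so is its normalization $D^\nu$, and Proposition~\ref{char-abel-var}(1) gives $K_{D^\nu}\geQ 0$. Adding the two effective $\QQ$-divisors then proves~(1).

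For~(2), suppose $(K_X+D+\Delta)|_{D^\nu}\simQ 0$. The displayed relation gives $K_{D^\nu}\simQ-(\Delta_{D^\nu}+\Delta|_{D^\nu})$, so both $K_{D^\nu}\geQ 0$ and $-K_{D^\nu}\geQ 0$; choosing effective representatives $G_1\simQ K_{D^\nu}$, $G_2\simQ -K_{D^\nu}$, the effective divisor $G_1+G_2$ is $\QQ$-linearly trivial on the complete variety $D^\nu$, hence $G_1+G_2=0$, so $G_1=G_2=0$. Thus $K_{D^\nu}\simQ0$, and since $\Delta_{D^\nu}+\Delta|_{D^\nu}\simQ-K_{D^\nu}\simQ0$ is effective it too vanishes, so $\Delta_{D^\nu}=\Delta|_{D^\nu}=0$. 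Now Proposition~\ref{char-abel-var}(2) shows $D^\nu$ is an abelian variety, in particular smooth; hence the pair $(D^\nu,\Delta_{D^\nu})=(D^\nu,0)$ is strongly $F$-regular (a smooth variety gives a strongly $F$-regular pair with empty boundary), and Proposition~\ref{adjunction}(3) yields that $D$ is normal, i.e.\ $D=D^\nu$ is an abelian variety. Finally $\Delta|_D=\Delta|_{D^\nu}=0\simQ 0$, and since $\Delta_{D^\nu}=0$, Proposition~\ref{adjunction}(2) says that for every prime divisor $V$ of $D$ the variety $X$ (and $D$) is regular at the generic point of $V$, i.e.\ $X$ is regular at the codimension-one points of $D$.

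For~(3), assume $D|_{D^\nu}\geQ 0$ and $(K_X+D+aD+\Delta)|_{D^\nu}\simQ 0$ for some $a\in\QQ_{>0}$. Then $(K_X+D+\Delta)|_{D^\nu}\simQ-a\,D|_{D^\nu}$, which is $\leQ 0$ because $D|_{D^\nu}\geQ0$ and $a>0$; on the other hand part~(1) gives $(K_X+D+\Delta)|_{D^\nu}\geQ 0$. Running the same cancellation argument as in~(2) (an effective divisor $\QQ$-linearly trivial on a complete variety is $0$) we conclude $(K_X+D+\Delta)|_{D^\nu}\simQ 0$, so part~(2) applies, and moreover $a\,D|_{D^\nu}\simQ-(K_X+D+\Delta)|_{D^\nu}\simQ 0$, hence $D|_{D^\nu}\simQ 0$.

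The bookkeeping above — well-definedness of the restrictions, and the elementary fact that an effective divisor $\QQ$-linearly trivial on a complete variety vanishes — is routine. The one genuinely delicate step is promoting ``$D^\nu$ is an abelian variety'' to ``$D$ is an abelian variety'': this is exactly where we use the vanishing $\Delta_{D^\nu}=0$ extracted from $(K_X+D+\Delta)|_{D^\nu}\simQ0$, the (standard) strong $F$-regularity of the smooth pair $(D^\nu,0)$, and the normality criterion Proposition~\ref{adjunction}(3). Without that input one only controls $D$ at its codimension-one points, which is insufficient for the stated conclusion.
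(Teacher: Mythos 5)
Your proof is correct and follows essentially the same route as the paper: the paper's own argument simply invokes Proposition~\ref{adjunction} to get $(K_X+D+\Delta)|_{D^\nu}\simQ K_{D^\nu}+\Delta_{D^\nu}+\Delta|_{D^\nu}$ and Proposition~\ref{char-abel-var} to get $K_{D^\nu}\geQ 0$, and declares the three assertions immediate. You have filled in the implicit details — in particular the cancellation of effective $\QQ$-linearly trivial divisors and the use of Proposition~\ref{adjunction}(3) with the strongly $F$-regular pair $(D^\nu,0)$ to promote normality of $D^\nu$ to $D=D^\nu$ — all of which are exactly the steps the paper leaves to the reader.
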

\begin{proof}
  By Proposition \ref{char-abel-var} (1), we have $K_{D^\nu} \geQ 0$. 
  In turn, applying the adjunction formula (Proposition \ref{adjunction}) we have
  \[
  (K_X + D + \Delta)|_{D^\nu} \simQ K_{D^\nu} + \Delta_{D^\nu} + \Delta|_{D^\nu}\geQ 0, \]
  which is the assertion (1). 

  Next, assuming $(K_X + D+\Delta)|_{D^\nu} \simQ 0$, from the above equation we see that
  $$K_{D^\nu} \sim_{\mathbb{Q}} \Delta_{D^\nu} \sim_{\mathbb{Q}} \Delta|_{D^\nu} \sim_{\mathbb{Q}} 0.$$
  Then the assertion (2) follows immediately from Proposition~\ref{char-abel-var} (2) and Proposition~\ref{adjunction} (2).

  Finally, assuming $D|_{D^\nu} \geQ 0$, applying the adjunction formula (Proposition \ref{adjunction}) again, we have
  \[ 0\simQ (K_X + D + aD + \Delta)|_{D^\nu} \sim_{\mathbb{Q}} K_{D^\nu} + \Delta_{D^\nu} + aD|_{D^\nu} + \Delta|_{D^\nu}\geQ 0. \]
  It follows that
  $K_{D^\nu} \sim_{\mathbb{Q}} \Delta_{D^\nu}\sim_{\mathbb{Q}} D|_{D^\nu}\sim_{\mathbb{Q}} \Delta|_{D^\nu} \sim_{\mathbb{Q}} 0$.
  Then the assertion (3) follows by the same argument as above.
 \end{proof}

\subsection{Some results of elliptic fibrations}%
The following result appears as a middle step in the proof of \cite[Theorem~1.2]{CZ15}.
\begin{thm}[{\cite[Claim~3.2 and Remark~3.3]{CZ15}}]\label{rel-can-ellfib-sm}
  Let $f\colon X\to Z$ be an elliptic fibration from a normal variety $X$ onto a smooth variety $Z$. Then $\kappa(X,K_{X/Z})\ge 0$ and $\kappa(X) \geq \max\{\kappa(Z), \mathrm{Var}(f)\}$.
\end{thm}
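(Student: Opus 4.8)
The plan is to prove a Kodaira‑type relative canonical bundle formula for $f$ and read off both inequalities from it. I would first pass to a relatively minimal model of $f$: resolving $X$ and contracting the $(-1)$‑curves contained in the fibres — which over the generic point of a codimension‑one point of $Z$ is exactly the theory of minimal elliptic surfaces, available in characteristic $p$ by Bombieri--Mumford — one may assume $f$ is relatively minimal with $X$ having at worst canonical singularities. The difference between the canonical divisors of the original and the new model is effective and exceptional, so by the covering theorem (Theorem~\ref{ct}) neither $\kappa(X,K_{X/Z})$ nor $\kappa(X,K_X)$ changes, while $\kappa(Z)$ and $\Var(f)$ are visibly unaffected. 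On such a model the generic fibre $X_\eta$ is a smooth genus‑one curve over $k(Z)$, so $\omega_{X_\eta}\cong\mathcal O_{X_\eta}$; in particular $K_{X/Z}$ is $f$‑trivial and $f_*\omega_{X/Z}$ is a torsion‑free rank‑one sheaf, whose reflexive hull on the smooth variety $Z$ is a line bundle $\mathcal O_Z(M)$.

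Next I would set up the canonical bundle formula. Working over the locus where $f$ is smooth, together with the codimension‑one analysis of the fibres — a multiple fibre of multiplicity $m$ over a prime divisor $P\subset Z$ contributing its Kodaira coefficient, the remaining locus being of codimension $\ge 2$ in $Z$ and hence irrelevant for divisor classes — one obtains
\[
  K_{X/Z}\simQ f^*M+B,\qquad B\ge 0\ \text{and}\ f\text{-vertical},
\]
equivalently $K_X\simQ f^*(K_Z+M)+B$, where $M$ is the moduli part.

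The crux is the positivity of $M$: one must show $M\geQ 0$ and $\kappa(Z,M)\ge\Var(f)$. In characteristic zero this is the Fujita--Kawamata semipositivity of $f_*\omega_{X/Z}$ combined with the fact that $M$ is, up to a multiple and a generically finite base change, the pullback of the Hodge line bundle on the coarse moduli space of genus‑one curves — which is ample, so the Iitaka dimension of its pullback equals the dimension of the image of the $j$‑map $Z\dashrightarrow\mathbb P^1$, that is $\Var(f)$ — while in the isotrivial case $\Var(f)=0$ the moduli part is torsion. In characteristic $p$ the Hodge‑theoretic input is unavailable, and this is the main obstacle: one replaces it either by a positivity theorem for direct images of relative pluricanonical sheaves of Patakfalvi type, or by an explicit birational study of the Weierstrass model of the relative Jacobian of $f$, and one has to extract simultaneously the $\QQ$‑effectivity of $M$ and the sharp bound $\kappa(Z,M)\ge\Var(f)$, all while tracking how multiple fibres and the inseparability phenomena special to characteristic $p$ affect the coefficients of the boundary part $B$.

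Granting this, the conclusion is bookkeeping. Since $f$ is a fibration and $B\ge0$, the projection formula gives $\kappa(X,f^*M)=\kappa(Z,M)$ and $\kappa(X,f^*M+B)\ge\kappa(X,f^*M)$, so
\[
  \kappa(X,K_{X/Z})\ge\kappa(Z,M)\ge\max\{0,\Var(f)\},
\]
whence $\kappa(X,K_{X/Z})\ge0$. For the second statement, from $K_X\simQ f^*(K_Z+M)+B$ with $M\geQ 0$ and $B\ge 0$ one gets $\kappa(X)=\kappa(X,K_X)\ge\kappa(Z,K_Z+M)$, which using $M\geQ0$ and the bigness of the moduli part in the respective ranges is bounded below by both $\kappa(Z)$ and $\Var(f)$; hence $\kappa(X)\ge\max\{\kappa(Z),\Var(f)\}$.
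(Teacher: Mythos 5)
The paper does not actually prove this statement: it is quoted from \cite{CZ15}*{Claim~3.2 and Remark~3.3}, with only the added remark that the argument there goes through when $X$ is merely normal. So your proposal has to stand on its own, and there the central step is missing. You reduce everything to a canonical bundle formula $K_{X/Z}\simQ f^*M+B$ and then write that the positivity of the moduli part --- $M\geQ 0$ and $\kappa(Z,M)\ge\Var(f)$ --- ``is the main obstacle'' in characteristic $p$, offer two possible substitutes for the unavailable Hodge-theoretic input (a Patakfalvi-type positivity theorem, or a study of the Weierstrass model of the relative Jacobian) without carrying out either, and conclude with ``granting this, the conclusion is bookkeeping.'' But that granted step \emph{is} the theorem: in characteristic $p$ Fujita--Kawamata semipositivity of $f_*\omega_{X/Z}$ fails in general, multiple fibres can be wild, and establishing the $\QQ$-effectivity of $M$ together with $\kappa(Z,M)\ge\Var(f)$ is the entire content of the cited Claim~3.2 and Remark~3.3. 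What you have written is an outline of the characteristic-zero proof plus an accurate description of why it does not transplant, not a proof.

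The final ``bookkeeping'' is also not sound as written. From $K_X\simQ f^*(K_Z+M)+B$ with $M\geQ0$ you may conclude $\kappa(X)\ge\kappa(Z,K_Z+M)\ge\kappa(Z)$, but you cannot pass from $\kappa(Z,M)\ge\Var(f)$ to $\kappa(Z,K_Z+M)\ge\Var(f)$ unless $K_Z\geQ0$: adding a negative $K_Z$ can kill all sections. A rational elliptic surface $X\to\PP^1$ with varying $j$-invariant satisfies every input you grant ($M\geQ0$, $\kappa(\PP^1,M)=1=\Var(f)$) yet has $\kappa(X)=-\infty$, so no amount of positivity of $M$ alone can rescue the deduction $\kappa(X)\ge\Var(f)$. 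Note that the inequality the paper actually needs later (in Step~1 of the proof of Theorem~\ref{thm:elliptic-3fold}, where $\Var(f_C)=0$ is deduced from $K_{X_C/C}\simQ0$) is $\kappa(X,K_{X/Z})\ge\Var(f)$; that form does follow from $\kappa(Z,M)\ge\Var(f)$ by the projection formula with no interference from $K_Z$, and it is the version you should aim to prove.
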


Remark that in the setting of \cite{CZ15}, $X$ is assumed to be smooth, but the proof works when $X$ is normal.

\medskip
\begin{prop}\label{tri-rel-can-ellfib-isotrivial}
  Let $f\colon X \to C$ be an elliptic fibration from a normal projective surface to a smooth curve.
  If $K_{X/C} \simQ 0$, then there exists a finite flat morphism $\sigma\colon C_1\to C$ such that $X\times_C C_1 \cong C_1 \times F$, where $F$ is a closed fiber of $f$.
\end{prop}
\begin{proof}
% We follow the terminology of \cite{CZ15}.
  By doing a semi-stable reduction (see \cite[Proposition~10.2.33]{LiuAGAC}), we obtain the following commutative diagram
  \[
    \xymatrix{ 
      \widetilde Y \ar@/^8mm/[rrr]|{\,\pi\,}\ar[rrd]_g \ar[r]^{\mu}& Y \ar[r]^{\nu} \ar[rd] & X_{C'} \ar[r]^<<<{\tau'}\ar[d]^{f'} & X \ar[d]^f \\
                 &                                     & C' \ar[r]^{\tau}            & C \rlap{\,,}}
  \]
  where $C'$ is a smooth curve,
  $\tau$ is a finite morphism,
  $X_{C'} := X\times_C C'$ is the fiber product,
  $\nu$ is the normalization,
  $\mu$ is a minimal resolution,
  $\pi = \tau'\circ\nu\circ\mu$ is the composition, and
  $g$ is a semi-stable elliptic fibration.
  We have $K_{Y/C'} = (\tau'\circ\nu)^* K_{X/C} - \mathfrak C$ where $\mathfrak C$ is the conductor divisor of $\nu$, and $K_{\widetilde Y} = \mu^*K_Y - \widetilde E$ where $\widetilde E$ is an effective $\mu$-exceptional divisor.
It follows that \[
    K_{\widetilde Y/C'} = \pi^* K_{X/C} - \mu^*\mathfrak C - \widetilde E.
  \]
We have $K_{\widetilde Y/C'} \geQ 0$ by Theorem~\ref{rel-can-ellfib-sm}, in turn, combining with the assumption $K_{X/C} \simQ 0$ we can show that $K_{\widetilde Y/C'}\simQ0$ and $\mathfrak C =  \widetilde E = 0$.
It follows that  $Y$ has at most canonical singularities, and that $X_{C'}$ is normal by Lemma \ref{lem:flat-base-change-integral}, that is, $Y=X_{C'}$.
  Since $K_{\widetilde Y/C'}\simQ0$, we can apply \cite[Theorem~2.14]{CZ15} to obtain a finite morphism $C_1\to C'$ from a smooth curve $C_1$ such that $\widetilde Y \times_{C'} C_1\cong C_1 \times F$.
  In particular, all fibers of $g$ are irreducible, this implies that $\widetilde Y \to Y$ is an isomorphism. 
  We complete the proof by taking $\sigma$ to be the composition $C_1\to C'\to C$.
\end{proof}

 The following result can be derived directly from the proof of \cite[Theorem~4]{BM77}.
\begin{prop}[{cf.~\cite[Theorem~4]{BM77}}]\label{isotrivial-prod}
  Let $X$ be a quasi-projective normal variety which is equipped with a fibration $g\colon X\to Z$ and a morphism $f\colon X\to E$ to an elliptic curve such that $K(E)$ is algebraically closed in $K(X)$. Assume that all the closed fibers $C_z$ of $g$ are elliptic curves and that the induced morphisms $f|_{C_z}\colon C_z\to E$ are finite.
  Denote by $F$ a general fiber of $f$.
  Then there is an isogeny $\tau\colon E'\to E$ from an elliptic curve $E'$ such that $X \times_E E' \cong E' \times F$.
  Moreover, we have
  \[
    X \cong E'\times F/ G,
  \]
  where $G:=\ker\tau$ acts on $E' \times F$ diagonally: it acts on $E'$ by translation and on $F$ by some injective homomorphism of group functors $\alpha\colon G \to \Aut_F$.
\end{prop}

% {\color{red}
%    \cite{BM77}'s construction of a diagonal action in their study of bi-elliptic surfaces.
%    By excluding non-elliptic fibers, their proof can be extended to higher dimensions, which we formulate as follows.
% }

\subsection{Canonical bundle formula of relative dimension one}\label{sec:cbf-rel-one}
Let us recall some canonical bundle formulas for fibrations of relative dimension one.

\medskip
For a fibration $f\colon X\to S$ from $X$ to a variety of maximal Albanese dimension, we collect some useful results from \cite{CWZ23}.
\begin{thm}[{\cite[Theorem~1.3 and 7.3]{CWZ23}}]\label{thm:cb-formula}
  Let $(X,\Delta)$ be a normal $\QQ$-factorial projective pair.
  Let $f\colon X \to S$ be a fibration of relative dimension one, where $S$ is a normal variety of maximal Albanese dimension.
  Let $a_S\colon S \to A$ be the Albanese morphism of $S$.
  Assume that the pair $(X_{K(S)},\Delta_{K(S)})$ is klt and that $K_X +\Delta \simQ f^*D$ for some $\mathbb{Q}$-Cartier divisor $D$ on $S$.
  Then
  \begin{itemize}
    \item[\rm(1)] If $f$ or $a_S$ is separable, then $\kappa(X,K_X+\Delta) \geq \kappa(S)$.
    \item[\rm(2)] If $a_S$ is inseparable, then $\kappa(X,K_X+\Delta) \ge 0$; furthermore, if $\dim X = 3$, then $\kappa(X,K_X+\Delta) \geq 1$.
  \end{itemize}
\end{thm}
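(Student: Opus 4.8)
The plan is to transfer the computation of $\kappa(X,K_X+\Delta)$ down to the base $S$, run the relative‑dimension‑one canonical bundle formula there, and control the resulting correction terms using that $S$ has maximal Albanese dimension; the genuinely characteristic‑$p$ difficulty will be concentrated in the inseparable case. \textbf{First (reduction to $S$):} since $f$ is a fibration, $f_*\mathcal O_X=\mathcal O_S$, and from $K_X+\Delta\simQ f^*D$ the projection formula gives $H^0(X,m(K_X+\Delta))=H^0(S,mD)$ for all sufficiently divisible $m$, so $\kappa(X,K_X+\Delta)=\kappa(S,D)$ (equivalently, Theorem~\ref{ct} with trivial exceptional divisor). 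Because $S$ has maximal Albanese dimension, $K_S\geQ 0$ by Proposition~\ref{char-abel-var}, hence $\kappa(S)=\kappa(S,K_S)\ge 0$. It therefore suffices to prove $D-K_S\geQ 0$ for part~(1), $D\geQ 0$ for the first half of part~(2), and $\kappa(S,D)\ge 1$ for the remainder.

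\textbf{Canonical bundle formula.} Pulling $D$ back, $f^*(D-K_S)\simQ K_{X/S}+\Delta$. The klt generic fiber with $K+\Delta\simQ 0$ in dimension one has one of two shapes: either $X_{K(S)}$ has arithmetic genus one with $\Delta$ $f$-vertical, or $X_{K(S)}$ has arithmetic genus zero with $\Delta$ having $f$-horizontal degree two. In either case I would invoke the relative‑dimension‑one canonical bundle formula of \cite{CZ15} and \cite{CWZ23} to write $D-K_S\simQ B_S+M_S$ with an effective discriminant $B_S\ge 0$ and a moduli part $M_S$. The whole problem then collapses to the positivity $M_S\geQ 0$: granting it, $D-K_S\simQ B_S+M_S\geQ 0$ proves part~(1), and $D\simQ K_S+B_S+M_S\geQ 0$ proves the first half of part~(2).

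\textbf{Positivity of the moduli part, and the refinement.} When $X_{K(S)}$ is a smooth genus‑one curve and $f$ is separable, $M_S\geQ 0$ is literally the assertion $\kappa(X,K_{X/S})\ge 0$ of Theorem~\ref{rel-can-ellfib-sm} (after a resolution of $S$, harmless by Theorem~\ref{ct}); indeed that theorem already gives $\kappa(X)\ge\kappa(S)$ outright. For a quasi‑elliptic generic fiber (only $p\in\{2,3\}$) I would instead show that the divisor supported on the singular locus of the fibers contributes effectively to $M_S$; for a genus‑zero fiber with horizontal boundary the cross‑ratio map of the boundary points to the moduli of pointed rational curves plays the same role. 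If $f$ is inseparable --- so $X_{K(S)}$ is not geometrically reduced --- I would perform a normalized Frobenius base change as in diagram~\eqref{eq:base-change}, apply the already‑settled separable case to the new fibration $g\colon Y\to T$ over the (still maximal‑Albanese‑dimension) base $T$, and descend via the comparison $\pi^*K_{X/S}\simQ K_{Y/T}+(p-1)E+V$ of~\eqref{eq:compds} together with~\eqref{eq:pullback-cano}; the extra $(p-1)E$ only helps, and its horizontal movable part (see~\eqref{eq:6QN0}) is what will force $\kappa\ge 1$. This settles part~(1) --- the hypothesis ``$f$ or $a_S$ separable'' being exactly what lets us land, after finitely many such base changes, in a separable situation --- and the bound $\kappa(S,D)\ge 0$ of part~(2). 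For the last half of part~(2): if instead $\kappa(S,D)=0$, then $\kappa(S)\le\kappa(S,D)=0$, so $\kappa(S)=0$; in the range where the sharper inseparable input to $M_S\geQ 0$ is available ($\dim X=3$, so that $S$ is a surface of maximal Albanese dimension, or $a_S$ finite) this forces $K_S\simQ 0$, hence $S$ is an abelian variety by Proposition~\ref{char-abel-var}(2), hence $a_S\colon S\to\Alb(S)=S$ is an isomorphism --- contradicting the inseparability of $a_S$.

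\textbf{Main difficulty.} The hard part will be the inseparable case above: making the normalized Frobenius base change precise --- tracking which part of $(p-1)E+V$ descends to an effective (and, for part~(2), movable) $\QQ$-divisor on $S$, and checking that $(Y,\Delta_Y)$ retains a klt fiber so that the separable case genuinely applies --- together with establishing $M_S\geQ 0$ uniformly in $p$ in the quasi‑elliptic case. These are precisely the points where the ``bad'' non‑reduced or singular fibers of positive characteristic intervene, and they are why the sharp conclusion of part~(2) is claimed only when $\dim X=3$ or $a_S$ is finite.
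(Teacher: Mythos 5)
This statement is not proved in the paper at all: it is imported verbatim from \cite{CWZ23}*{Theorem~1.3 and 7.5}, so there is no internal argument to compare yours against, and your sketch can only be judged against the strategy of the cited source as summarized in Sections~\ref{sec:base-change} and \ref{sec:curve-pa1} above. Your skeleton is the right one --- the identity $\kappa(X,K_X+\Delta)=\kappa(S,D)$, the reduction of both parts to effectivity of $D-K_S$ (resp.\ $D$) on $S$ using $K_S\geQ 0$ from Proposition~\ref{char-abel-var}, the dichotomy of generic fibers, and the normalized Frobenius base change with the Ji--Waldron movable part in the inseparable case are exactly the ingredients of \cite{CWZ23}. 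But there are genuine gaps. The central one is that the whole analytic content of the theorem, namely $M_S\geQ 0$ for quasi-elliptic fibers, for genus-zero fibers with horizontal boundary, and after inseparable base change, is announced rather than proved; moreover, if the decomposition $D\simQ K_S+B_S+M_S$ with $B_S\ge0$ and $M_S\geQ 0$ held as unconditionally as your part~(2) argument assumes, it would yield $D-K_S\geQ 0$ and hence $\kappa(X,K_X+\Delta)\ge\kappa(S)$ with no separability hypothesis at all --- more than the theorem claims. This signals that the positivity you assume cannot be available in that generality, and your sketch never pins down where the hypotheses ``$f$ or $a_S$ separable'' in (1) and ``$\dim X=3$ or $a_S$ finite'' in (2) actually enter.

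The second gap is in the endgame of part~(2): from $\kappa(S)=0$ and $K_S\geQ 0$ you conclude $K_S\simQ 0$ and invoke Proposition~\ref{char-abel-var}(2). That implication is false: $\kappa(S,K_S)=0$ only produces a rigid effective $\QQ$-divisor representing $K_S$, not a trivial one. A contradiction of this shape needs the birational characterization of abelian varieties of \cite{HPZ19} (maximal Albanese dimension and $\kappa=0$ force $a_S$ to be birational, hence separable), not the $K_S\simQ0$ criterion. Two smaller points: applying Theorem~\ref{rel-can-ellfib-sm} ``after a resolution of $S$'' is not harmless when $\dim S>3$, since resolutions are not known to exist in positive characteristic; and in the descent step the exceptional correction $V$ in \eqref{eq:compds} is not effective, so the slogan that ``the extra $(p-1)E$ only helps'' requires an argument rather than being automatic.
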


\begin{cor}[{\cite[Theorem~8.1]{CWZ23}}]\label{cor:fibration}
	Let $X$ be a normal projective $\mathbb Q$-factorial variety and $\Delta$ an effective $\mathbb{Q}$-divisor on $X$.
    Denote by $a_X\colon X\to A_X$ the Albanese morphism of $X$.
    Suppose that $-(K_X+\Delta)$ is nef, $X \to a_X(X)$ is of relative dimension one and $(X_{K(A)},\Delta_{K(A)})$ is klt. Then $a_X\colon  X \to A_X$ is a fibration.
\end{cor}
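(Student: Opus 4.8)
The plan is to take the Stein factorization $a_X = \bigl(X \xrightarrow{\,f\,} S \xrightarrow{\,h\,} A_X\bigr)$, where $f$ is a fibration of relative dimension one and $h$ is finite onto $a_X(X)\subseteq A_X$, and to prove that $h$ is an isomorphism; then $a_X\cong f$ is a fibration. One first notes that $S$ has maximal Albanese dimension: by the universal property of the Albanese variety, $h$ factors as $S\xrightarrow{\,\alb_S\,}\Alb(S)\to A_X$, and since $h$ is finite with image $a_X(X)$ of dimension $\dim S$, the image $\alb_S(S)$ must have dimension $\dim S$, so $\alb_S$ is generically finite. Moreover, since $f$ is a fibration, $f^*\colon\Pic^0(S)\to\Pic^0(X)$ is injective (by the projection formula, $f_*f^*\mathcal O = \mathcal O$), and from $a_X^*=f^*\circ h^*$ together with the injectivity of the canonical map $a_X^*\colon\Pic^0(A_X)\to\Pic^0(X)$ one gets that $h^*$ is injective — a fact used at the very end. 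For surjectivity of $a_X$ itself: were $a_X(X)$ a proper subvariety of $A_X$, it would be a positive-dimensional subvariety generating $A_X$ with $X\to a_X(X)$ of relative dimension one, and the surface estimate below (applied over a high-genus curve in $a_X(X)$) rules this out, so $a_X(X)=A_X$.

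The heart of the proof is to show that $K_S\simQ 0$. By Proposition~\ref{char-abel-var}(1) we already know $K_S\geQ 0$. Suppose $K_S\not\simQ 0$; then $K_S\cdot H^{\dim S-1}>0$ for an ample $H$ on $S$, so a general complete intersection curve $B\subseteq S$ of sufficiently high degree has genus $g(B)\gg 0$. Restricting over $B$, the surface $X_B:=f^{-1}(B)\to B$ is a fibration whose general fibre is the general fibre $C$ of $f$; the klt hypothesis forces $C$ to be a regular curve, and $-(K_X+\Delta)$ being nef gives $\deg\bigl((K_X+\Delta)|_C\bigr)\le 0$, hence $p_a(C)\le 1$, so $X_B$ is a ruled or (possibly quasi-)elliptic surface over $B$. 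But $-(K_{X_B}+\Delta|_{X_B})$ is nef, being the restriction of a nef divisor, and this contradicts $g(B)\gg 0$: in the (quasi-)elliptic case the relative canonical bundle formula gives $K_{X_B}\geQ \pi_B^*K_B$, so a multisection meets $-(K_{X_B}+\Delta|_{X_B})$ negatively; in the ruled case one obtains the same contradiction from the self-intersection of a minimal section. Hence $K_S\simQ 0$. (When $\deg\bigl((K_X+\Delta)|_C\bigr)=0$ — the (quasi-)elliptic and non-reduced-fibre cases — it is cleaner to argue instead via Theorem~\ref{thm:cb-formula}: one checks $K_X+\Delta\simQ f^*D$ for some $\QQ$-divisor $D$ on $S$, using that $(K_X+\Delta)|_C\simQ 0$ and that an $f$-vertical divisor whose negative is nef is a pullback; then, since $h=a_S$ is finite, Theorem~\ref{thm:cb-formula}(2) forces $a_S$ to be separable — otherwise $\kappa(X,K_X+\Delta)\ge 1$, impossible as $-(K_X+\Delta)$ is nef — and Theorem~\ref{thm:cb-formula}(1) then gives $0=\kappa(X,K_X+\Delta)\ge\kappa(S)$, so $\kappa(S)=0$ and, with $K_S\geQ 0$, again $K_S\simQ 0$.)

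Once $K_S\simQ 0$ and $S$ has maximal Albanese dimension, Proposition~\ref{char-abel-var}(2) shows $S$ is an abelian variety. Then $\alb_S=\id_S$, so $h\colon S\to A_X$ is (a translate of) an isogeny; since $h^*\colon\Pic^0(A_X)\to\Pic^0(S)$ is injective, its dual isogeny has trivial kernel, so $h$ is an isomorphism. Therefore $a_X\cong f$ is a fibration.

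I expect the second paragraph to be the main obstacle: the genuinely delicate point is controlling $K_S$ when the general fibre of $f$ is singular or the fibration is inseparable — the ``bad phenomenon'' the paper is built around — where one cannot use the elementary surface estimate and must instead invoke the canonical bundle formula Theorem~\ref{thm:cb-formula} together with the behaviour of the relative canonical divisor under purely inseparable base changes from Section~\ref{sec:base-change}. The first and last steps are essentially formal once $K_S\simQ 0$ is established.
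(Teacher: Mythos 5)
First, a point of comparison: the paper gives no proof of this statement — it is quoted verbatim from \cite{CWZ23}*{Theorem~8.1} — so there is no in-paper argument to measure you against. Your overall architecture (Stein factorize $a_X=h\circ f$, show $S$ has maximal Albanese dimension, prove $K_S\simQ 0$ so that Proposition~\ref{char-abel-var}(2) makes $S$ abelian, then kill the isogeny $h$ using injectivity of $h^*$ on $\Pic^0$) is the right one and matches the machinery this paper imports (Proposition~\ref{char-abel-var}, Theorem~\ref{thm:cb-formula}). The first and last paragraphs are essentially sound; in characteristic $p$ you should insist that $a_X^*$, hence $h^*$, has trivial \emph{scheme-theoretic} kernel, so that an infinitesimal kernel of the dual isogeny is also excluded, and you may note that surjectivity of $a_X$ is automatic once $S$ is abelian (its image is then a translate of an abelian subvariety generating $A_X$), so your separate surjectivity argument is not needed.

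The genuine gap is the middle step, $K_S\simQ 0$. The claim that $-(K_{X_B}+\Delta|_{X_B})$ is nef ``being the restriction of a nef divisor'' is false: for $X_B=f^*H_1\cap\cdots\cap f^*H_{\dim S-1}$ adjunction gives $K_{X_B}=(K_X+(\dim S-1)f^*H)|_{X_B}$, so $-(K_{X_B}+\Delta|_{X_B})$ differs from the nef divisor $-(K_X+\Delta)|_{X_B}$ by $-(\dim S-1)f^*H|_{X_B}$, which is very negative. Relatedly, your contradiction never actually uses $K_S\not\simQ 0$: a general high-degree complete intersection curve has $g(B)\gg 0$ regardless (the genus formula is dominated by $(\dim S-1)H^{\dim S}$ when $\dim S\ge 2$), so as written the argument would also rule out $X=\PP^1\times A$. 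The identity you need is $(K_X+\Delta)|_{X_B}\simQ K_{X_B/B}+\Delta|_{X_B}+f_B^*(K_S|_B)$ (the $H$-terms cancel), and the contradiction with nefness then requires $K_{X_B/B}+\Delta|_{X_B}$ to be pseudo-effective — which is Theorem~\ref{rel-can-ellfib-sm} for elliptic fibres but is precisely the hard characteristic-$p$ content of \cite{CWZ23} for quasi-elliptic and inseparable fibres, and fails outright for conic bundles, where your ``minimal section'' remark does not cover the cases $e<0$ or a geometrically non-reduced conic (Proposition~\ref{prop:ga0}(3)). Your fallback via Theorem~\ref{thm:cb-formula} is the correct route, but its hypothesis $K_X+\Delta\simQ f^*D$ is left unproved: the assertion that an $f$-vertical divisor whose negative is nef is a pullback needs an argument (a Zariski-lemma statement over curves in $S$ plus control of $f$-exceptional components), and no such $D$ exists at all when $\deg(K_X+\Delta)|_{X_\eta}<0$, so that case still needs separate treatment. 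In short, the skeleton is right, but the step you yourself flagged as delicate is exactly where the proof is incomplete.
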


\subsection{Curves of small arithmetic genus}\label{sec:curve-pa1}
In this subsection, let $K$ denote an $F$-finite field of characteristic $p$. %A curve over $K$ refers to a purely one-dimensional quasi-projective scheme over $K$.
Let $X$ be a normal integral projective $K$-curve with $H^0(X, \mathcal O_X)=K$.
Let $p_a(X) =\dim_K H^1(X, \mathcal O_X)$ be the arithmetic genus of $X$.
Let $D =\sum_i a_i \mathfrak p_i$ be a Cartier divisor on $X$. Recall that the {\em degree} of $D$ is defined to be the integer \[
  \deg_K D= \sum_i a_i [\kappa(\mathfrak p_i):K],
\]
where $\kappa(\mathfrak p_i)$ denotes the residue field of $\mathfrak p_i$.

If $K_X \equiv 0$, applying Riemann-Roch formula (\cite[Theorem~7.3.17]{LiuAGAC}) we see that $K_X\sim 0$ and $p_a(X) = 1$.
Remark that when $p\geq 5$, such a curve $X$ is smooth over $K$ (\cite{Tate52}). When $p<5$, $X$ is possibly geometrically singular.
We will focus on the singular behavior of such curves under field extensions.

First, we recall the classification of curves with $p_a=0$.

\begin{prop}[{\cite[Theorem~9.10]{Tan21}}]\label{prop:ga0}
  Let $X$ be regular projective curve over $K$ with $H^0(X,\mathcal O_X) = K$ and $p_a(X) = 0$. Then
  \begin{itemize}
    \item[\rm(1)] $\deg_K K_X = -2$.
    \item[\rm(2)] $X$ is isomorphic to a conic in $\mathbb{P}^2_K$, and $X \cong \mathbb{P}^1_K$ if and only if it has a $K$-rational point.
    \item[\rm(3)] Either $X$ is a smooth conic or $X$ is geometrically non-reduced.
      In the latter case, we have $\mathop{\rm char} K =2$, and $X$ is isomorphic to the curve defined by a quadric $sx^2 + ty^2 + z^2 = 0$ for some $s,t\in K\setminus K^2$.
  \end{itemize}
\end{prop}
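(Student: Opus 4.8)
The plan is to read off the numerics of $X$ from Riemann--Roch, then realize $X$ as the image of its anticanonical morphism and classify the resulting plane conic.

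\textbf{Part (1).} A normal curve is regular, hence Gorenstein, so $\omega_X$ is a line bundle and $\mathcal O_X(K_X)\cong\omega_X$. From $H^0(X,\mathcal O_X)=K$ and $h^1(\mathcal O_X)=p_a(X)=0$ we get $\chi(\mathcal O_X)=1$, so Riemann--Roch reads $\chi(L)=\deg_K L+1$ for every line bundle $L$ on $X$. Serre duality gives $h^1(\omega_X)=h^0(\mathcal O_X)=1$ and $h^0(\omega_X)=h^1(\mathcal O_X)=0$, whence $\deg_K\omega_X+1=\chi(\omega_X)=-1$, i.e.\ $\deg_K K_X=-2$. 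I also record: for any line bundle $M$ with $\deg_K M\ge-1$ one has $H^1(X,M)=0$ (since $h^1(M)=h^0(\omega_X\otimes M^\vee)$ and $\deg_K(\omega_X\otimes M^\vee)=-2-\deg_K M<0$), and $h^0(M)=0$ whenever $\deg_K M<0$ (a section would give an effective divisor of negative degree).

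\textbf{Part (2).} Put $L:=\omega_X^\vee$, so $\deg_K L=2$; the vanishings of Part (1) give $h^0(X,L)=3$ and $h^0(X,L^{\otimes2})=5$, and $H^1(X,L(-Z))=0$ for every effective Cartier divisor $Z$ on the regular curve $X$ with $\deg_K Z\le2$ (since then $\deg_K(L(-Z))\ge0>-1$), so by the standard criterion $L$ is very ample. Hence the anticanonical morphism is a closed immersion $X\hookrightarrow\mathbb P(H^0(X,L))=\mathbb P^2_K$ onto a curve of degree $\deg_K L=2$. Comparing $h^0(\mathbb P^2,\mathcal O(2))=6$ with $h^0(X,\mathcal O_X(2))=h^0(X,L^{\otimes2})=5$ produces a nonzero quadratic form $Q$ with $X\subseteq V(Q)$; since $X$ is integral of dimension one and would otherwise lie on a line (contradicting $\deg_K X=2$), $Q$ is irreducible and the prime ideal of $X$ equals $(Q)$, so $X=V(Q)$ is a conic. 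Finally $\mathbb P^1_K$ has $K$-points, and conversely, given $P\in X(K)$, placing $P=[0:0:1]$ we have $Q=z\,\ell(x,y)+q(x,y)$ with $\ell\ne0$ linear (because $X$ is regular at $P$, otherwise $P$ is a cone vertex) and $\ell\nmid q$ (because $Q$ is irreducible), so $[x:y]\mapsto[x\ell(x,y):y\ell(x,y):-q(x,y)]$ is a finite birational morphism $\mathbb P^1_K\to X$, hence an isomorphism as $X$ is normal.

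\textbf{Part (3).} Since $X=V(Q)\subseteq\mathbb P^2_K$ is regular it has no cone point: there is no nonzero $v\in K^3$ lying in the radical of the polar form $b_Q$ with $Q(v)=0$ (such a $[v]\in X$ would be singular). If $\Char K\ne2$ this forces $b_Q$ to be nondegenerate, so $V(Q)$ is smooth and $X$ is a smooth conic. If $\Char K=2$ then $b_Q$ is alternating, of rank $0$ or $2$: when $\mathrm{rank}\,b_Q=2$, regularity forces $Q$ nonzero on the one-dimensional radical, which is exactly smoothness, so again $X$ is a smooth conic; when $b_Q=0$, $Q$ is diagonal, $Q=s_0x^2+t_0y^2+u_0z^2$ with all $s_0,t_0,u_0\in K^\times$ (a coordinate point would otherwise be a cone vertex), and regularity means $Q$ is anisotropic over $K$, so after rescaling $Q=sx^2+ty^2+z^2$ with $s,t\in K\setminus K^2$ (if $s=w^2$ then $v=(1,0,w)$ is a nontrivial zero of $Q$, and likewise for $t$). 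In this last case $X_{\bar K}=V((\sqrt s\,x+\sqrt t\,y+z)^2)$ is a double line, so $X$ is geometrically non-reduced, as claimed; in the other cases $X_{\bar K}$ is a smooth plane conic, so $X$ is smooth over $K$. (The geometrically reducible ``line pair'' possibility is excluded the same way: it would make $Q$ vanish on the radical of $b_Q$, exhibiting $X$ as a cone over a $K$-rational vertex, a non-regular point.)

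\textbf{The main obstacle.} Part (1) and the abstract part of (2) are routine. The delicate work is the characteristic-$2$ geometry in Part (3): showing that the quadratic form of an \emph{integral regular} conic is a perfect square over $\bar K$ only when $b_Q=0$, which in turn forces $\Char K=2$; pinning down that $V(sx^2+ty^2+z^2)$ fails to be regular precisely when $Q$ has a nontrivial $K$-rational zero (in particular when $s\in K^2$ or $t\in K^2$); and ruling out the line-pair and other degenerate cases via the ``cone over a rational vertex'' observation. I expect this characteristic-$2$ bookkeeping, together with getting the very ampleness criterion right over a general (non-perfect) field $K$, to be the real sticking point.
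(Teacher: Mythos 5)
The paper offers no proof of this proposition: it is quoted verbatim from \cite{Tan21}*{Theorem~9.10}, so there is no in-text argument to compare against. Your blind proof is, as far as I can check, correct and self-contained, and it follows what is essentially the standard route (and very likely Tanaka's): Riemann--Roch and Serre duality over the ground field $K$ give $\deg_K K_X=-2$ and the cohomology of $\omega_X^\vee$; the anticanonical system embeds $X$ as a degree-two curve in $\PP^2_K$ cut out by a single irreducible quadric; and the trichotomy in (3) is read off from the rank of the polar form $b_Q$, using that a nonzero $K$-vector in $\mathrm{rad}(b_Q)$ on which $Q$ vanishes would be a $K$-rational point of multiplicity two, contradicting regularity. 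Your formulation of the very-ampleness criterion via length-two closed subschemes (rather than pairs of rational points) is the correct one over a non-closed field; if one wants to avoid invoking it, one can instead note that $\phi$ maps $X$ finitely and birationally onto an integral conic $C$ (the image is not a line since the coordinate sections are independent in $H^0(L)$), and that $C$ non-normal would force $h^0(\mathcal O_{\tilde C})\ge 1+\delta\ge 2$, contradicting $H^0(X,\mathcal O_X)=K$. Two small remarks on Part (3): your parenthetical ``regularity means $Q$ is anisotropic'' is used only in the direction regular $\Rightarrow$ anisotropic, which is the direction you prove and the only one needed; and note that $s,t\in K\setminus K^2$ is weaker than anisotropy of $sx^2+ty^2+z^2$ (which is equivalent to $[K(\sqrt s,\sqrt t):K]=4$, cf.\ case (c2) of Proposition~\ref{prop:curve-nonreduced}), but the proposition only asserts the weaker necessary condition, so your argument suffices.
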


Next, we consider regular curves with $p_a = 1$. We collect some related results here and refer to \cite[Section~4]{CWZ23} for details.

\begin{prop}\label{prop:ga1-geo-reduced}
    Let $X$ be a regular projective curve over $K$ with $H^0(X,\mathcal O_X) = K$ and $p_a(X) = 1$. 
    Assume that $X$ is geometrically reduced and non-smooth. Then there exists a field extension $K \subset L \subseteq K^{1/p}$ such that $X _L$ is singular; and for any such $L$, the normalization $Y:= (X _ L)^\nu$ is a smooth curve of genus zero and the following statements hold.
  \begin{itemize}
    \item[\rm(1)]
      The non-smooth locus of $X$ is supported at a closed point $\mathfrak p$ and
      $\kappa(\mathfrak p)/K$ is purely inseparable of height one with $[\kappa(\mathfrak p):K]\le p^2$.
     In particular, there exists a unique point $\mathfrak q \in Y$ lying over $\mathfrak p$.
    \item[\rm(2)]
      If $p=3$, then $Y\cong\mathbb{P}^1_L$, $\pi^*\mathfrak p=3\mathfrak q$ (where $\pi\colon Y\to X$ is the induced morphism), $\mathfrak q$ is an $L$-rational point, and $[\kappa(\mathfrak p):K]=3$.
    \item[\rm(3)] Assume $p=2$.
      \begin{itemize}
      \item[\rm(a)] If the point $\mathfrak q$ is $L$-rational, then $Y\cong \mathbb{P}^1_L$, $\pi^*\mathfrak p=2\mathfrak q$ and $[\kappa(\mathfrak p):K]=2$.
      \item[\rm(b)] If the point $\mathfrak q$ is not $L$-rational, then $\deg_L \mathfrak q=2$, and
        \[
          \pi^*\mathfrak p=
          \begin{cases}
          \mathfrak q,  &\hbox{ if } \deg_K(\mathfrak p) = 2;\\
          2\mathfrak q, &\hbox{ if } \deg_K(\mathfrak p) = 4.
        \end{cases}\]
    \end{itemize}
  \end{itemize}
\end{prop}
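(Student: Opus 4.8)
The plan is to pass to the algebraic closure, read off the singularity type of $X_{\bar K}$ from the genus of its normalization, and then descend the resulting picture to a minimal purely inseparable extension $L/K$.

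First I would observe that $X$ is geometrically integral: geometric reducedness forces $K(X)/K$ to be separable, and $H^0(X,\mathcal O_X)=K$ then means $K$ is algebraically closed in $K(X)$, so $X_{\bar K}$ is integral, with $p_a(X_{\bar K})=p_a(X)=1$ by flat base change; it is Gorenstein because $K_{X_{\bar K}}$ is the pullback of the Cartier divisor $K_X\sim0$. Let $\bar\pi\colon\bar Y\to X_{\bar K}$ be the normalization, with conductor $\mathfrak C$. Since $X$ is not smooth and $\bar K$ is perfect, $X_{\bar K}$ is non-normal, so $\mathfrak C\ne0$; from $p_a(\bar Y)=p_a(X_{\bar K})-\tfrac12\deg\mathfrak C=1-\tfrac12\deg\mathfrak C\ge0$ and $\deg\mathfrak C=2\delta\ge2$ one gets $\deg\mathfrak C=2$, $\delta=1$, $p_a(\bar Y)=0$, hence $\bar Y\cong\mathbb P^1_{\bar K}$. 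The non-normal locus of $X_{\bar K}$ is then a single point $\bar{\mathfrak p}$, and since $X$ is regular (hence geometrically unibranch) $X_{\bar K}$ is unibranch there: $\bar{\mathfrak p}$ is a cusp, $\bar\pi^{-1}(\bar{\mathfrak p})=\{\bar{\mathfrak q}\}$ with $\widehat{\mathcal O}_{X_{\bar K},\bar{\mathfrak p}}\cong\bar K[[s^2,s^3]]$, and $\mathfrak C=2\bar{\mathfrak q}$.

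Next I would descend. Let $\mathfrak p\in X$ be the image of $\bar{\mathfrak p}$. As smoothness descends along $\bar K/K$, and as the singular points of $X_{\bar K}$ lying over a fixed point of $X$ are permuted transitively by $\mathrm{Gal}(K^{\mathrm{sep}}/K)$, it follows that $\mathfrak p$ is the unique non-smooth point of $X$ and that $\kappa(\mathfrak p)/K$ is purely inseparable (a nontrivial separable part would produce several conjugate singular points on $X_{\bar K}$). The cotangent sequence $\mathfrak m_{\mathfrak p}/\mathfrak m_{\mathfrak p}^2\to\Omega^1_{X/K}\otimes\kappa(\mathfrak p)\to\Omega^1_{\kappa(\mathfrak p)/K}\to0$, in which the middle term has dimension $2$ (the embedding dimension of the cusp) while $\dim\mathfrak m_{\mathfrak p}/\mathfrak m_{\mathfrak p}^2=1$, together with the fact that the cusp arises by base change from the regular ring $\mathcal O_{X,\mathfrak p}$, shows that $\kappa(\mathfrak p)/K$ has height one and $[\kappa(\mathfrak p):K]\le p^2$. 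Now let $L/K$ be minimal purely inseparable such that the normalization $Y$ of $X_L$ is smooth over $L$; then $Y_{\bar K}=\bar Y=\mathbb P^1_{\bar K}$, so $p_a(Y)=0$, and by Proposition~\ref{prop:ga0} $Y$ is a conic over $L$, isomorphic to $\mathbb P^1_L$ exactly when it carries an $L$-rational point. There is a unique $\mathfrak q\in Y$ over $\mathfrak p$, with $\kappa(\mathfrak q)/L$ purely inseparable, and $\pi\colon Y\to X$ is finite and flat.

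For the numerics: the relative dualizing sheaf $\omega_{X_L/X}\cong\mathcal O_{X_L}$ is trivial, so $K_{X_L}\sim\pi_1^*K_X\sim0$; since $X_L$ is Gorenstein, the conductor of $\nu\colon Y\to X_L$ equals $-K_Y$, is supported at $\mathfrak q$, and has degree $\deg_L(-K_Y)=2=2(p_a(X_L)-p_a(Y))$, so writing it as $c\,\mathfrak q$ gives $c\,[\kappa(\mathfrak q):L]=2$; writing $\pi^*\mathfrak p=v\,\mathfrak q$ and comparing degrees over $K$ ($\deg_K\pi^*\mathfrak p=[L:K]\deg_K\mathfrak p$) gives $v\,[\kappa(\mathfrak q):L]=[\kappa(\mathfrak p):K]$. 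Finally, applying \eqref{eq:pullback-cano} to the height-one steps of $\pi$ yields $0\sim\pi^*K_X\sim K_Y+(p-1)\mathfrak D$ with $\mathfrak D\ge0$ of positive degree, so $(p-1)\deg_L\mathfrak D=2$ and $p\in\{2,3\}$ (this is essentially the remark preceding the statement, i.e. Tate's theorem on genus change). For $p=3$, $[\kappa(\mathfrak q):L]$ is a power of $3$ dividing $2$, hence $=1$, so $\mathfrak q$ is $L$-rational and $Y\cong\mathbb P^1_L$; the cuspidal form $y^2=x^3+\beta$ with $\beta\notin K^3$ (so $\kappa(\mathfrak p)=K(\beta^{1/3})=L$) and the parametrization $x-\beta^{1/3}=s^2$, $y=s^3$ then yield $[\kappa(\mathfrak p):K]=3$, $c=2$, $v=3$. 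For $p=2$, $[\kappa(\mathfrak q):L]\in\{1,2\}$, and the normal form $y^2=x^3+a_2x^2+a_4x+a_6$ (with $a_1=a_3=0$ forced by regularity), whose singular point is $(\sqrt{a_4},\sqrt{a_2a_4+a_6})$, gives $[\kappa(\mathfrak p):K]\in\{2,4\}$; the identities above then split the situation into case (3)(a) ($\mathfrak q$ $L$-rational, $Y\cong\mathbb P^1_L$, $v=2$) and case (3)(b) ($\deg_L\mathfrak q=2$, $v=1$ or $2$ according to $\deg_K\mathfrak p=2$ or $4$). The hard part will be exactly this last step: one must certify that the cuspidal equation and the residue field $\kappa(\mathfrak p)$ are genuinely forced, that $\pi$ has degree exactly $p$ (equivalently, that one Frobenius base change already resolves the singularity), and, in case (3)(b), that $L$ really is the field over which $Y$ becomes a possibly non-split conic and that $\mathfrak q$ is genuinely not $L$-rational; this is where the detailed calculations of \cite{BM77} and \cite{CWZ23}*{Section~3} are needed, while everything before it — the genus drop under normalization, geometric unibranch-ness, the conductor and degree identities, and the canonical bundle formula \eqref{eq:pullback-cano} — is formal.
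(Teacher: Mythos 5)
The paper does not actually prove this proposition: the surrounding text says ``we refer to [CWZ23, Section~3] for details,'' so there is no in-paper argument to compare against. Measured on its own terms, your skeleton is the right one and matches what the cited reference must do: geometric integrality from $H^0(\mathcal O_X)=K$ plus geometric reducedness; the Gorenstein genus-drop computation forcing $\delta=1$, $\bar Y\cong\mathbb P^1_{\bar K}$ and a conductor of degree $2$; unibranchness of the normal local ring $\mathcal O_{X,\mathfrak p}$ forcing a cusp rather than a node; the identity $c\,[\kappa(\mathfrak q):L]=2$ from $\mathfrak C\sim -K_Y$; the identity $v\,[\kappa(\mathfrak q):L]=[\kappa(\mathfrak p):K]$ from the degree formula; and $(p-1)\deg\mathfrak D=2$ giving $p\in\{2,3\}$. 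These identities do correctly reproduce the dichotomy in (3)(b) once the inputs are known.

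There are, however, two genuine gaps. First, in part (1) the cotangent sequence only shows $\dim_{\kappa(\mathfrak p)}\Omega^1_{\kappa(\mathfrak p)/K}\le 2$, i.e.\ that $\kappa(\mathfrak p)$ is generated over $K$ by at most two elements; this bounds neither the height nor the degree (a monogenic extension $K(\alpha)$ with $\alpha^{p^2}\in K$, $\alpha^p\notin K$ has one generator but height two and degree $p^2$). The assertions ``height one'' and ``$[\kappa(\mathfrak p):K]\le p^2$'' — equivalently, that a single Frobenius base change already destroys normality and that its normalization is smooth — require the finer local analysis of the cusp (Schr\"oer's classification, or [PW22]/[CWZ23, \S3]); the phrase ``together with the fact that the cusp arises by base change from the regular ring'' is exactly the missing argument, not a proof of it. Second, the numerical identities leave open combinations that the statement excludes (e.g.\ $[\kappa(\mathfrak p):K]=9$ when $p=3$, or $\mathfrak q$ being $L$-rational with $[\kappa(\mathfrak p):K]=4$ when $p=2$), and ruling these out is precisely the Weierstrass-form case analysis you defer to [BM77] and [CWZ23]. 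Since the paper itself only cites [CWZ23, \S3], your proposal is on par with the paper's treatment as an outline, but as written it is not a self-contained proof of parts (1)--(3).
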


\begin{prop}\label{prop:curve-nonreduced}
  Let $X$ be regular projective curve over $K$ with $H^0(X,\mathcal O_X) = K$ and $p_a(X) = 1$.
  Assume that $X$ is geometrically non-reduced. 
  Then
  \begin{itemize}
    \item[\rm(1)] There exists a height one field extension $K\subset L \subseteq K^{1/p}$ such that $X_L$ is integral but not normal,
      and for its normalization $Y$, we have $L \subsetneq K' := H^0(Y,\mathcal O_Y)\subseteq K^{\frac{1}{p}}$.
      Note that $Y \cong (X_{K'})_{\rm red}^\nu$ over $K'$.
      Denote by $\pi\colon Y\to X$ the induced morphism.
    \item[\rm(2)] We have $0\sim \pi^* K_X \sim K_Y + (p-1)C$ where $C$ is a Weil divisor such that $\deg_{K'} (p-1)C = 2$.
    \item[\rm(3)] The divisor $C$ is supported on either a single point $\mathfrak q\in Y$ or two points $\mathfrak q_1,\mathfrak q_2\in Y$ (this happens only when $p=2$).
      All the possibilities are listed as follows.
      \begin{itemize}
        \item[\rm(i)]
          $p=3$,  $X_L$ has a unique non-normal point, $ C= \mathfrak q$,  $Y\cong \mathbb{P}_{K'}^1$ and either $\pi^*\mathfrak p = \mathfrak q$ or $\pi^*\mathfrak p = 3\mathfrak q$.
        \item[\rm(ii)]
          $p=2$, and we fall into one of the following cases
          \begin{itemize}
            \item[\rm(a)] $ C= 2\mathfrak q$,  $\mathfrak q$ is a $K'$-rational point of $Y$ and $Y \cong \mathbb{P}_{K'}^1$;
            \item[\rm(b)] $ C= \mathfrak q_1+\mathfrak q_2$, and  $Y \cong \mathbb{P}_{K'}^1$;
            \item[\rm(c)] $ C= \mathfrak q$, $\kappa(\mathfrak q)/K'$ is an extension of degree two, and either
              \begin{itemize}
                \item[\rm(c1)] $Y\subset\mathbb{P}^2_{K'}$ is a smooth conic (possibly $\mathbb P^1_{K'}$), or
                \item[\rm(c2)] $Y$ is isomorphic to the curve defined by $sx^2 + ty^2 + z^2 = 0$ for some $s,t\in K'\setminus K'^2$ such that $[K'^2(s,t):K'^2]=4$.
              \end{itemize}
          \end{itemize}
        \end{itemize}
  \end{itemize}
\end{prop}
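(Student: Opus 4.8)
The plan is to reduce the statement to a degree count on the normalization, fed by the canonical bundle formula under purely inseparable base change \eqref{eq:compds} together with the classification of arithmetic-genus-zero curves in Proposition~\ref{prop:ga0}. For part~(1), I would carry out the analysis of purely inseparable base change of a normal curve as in \cite{CWZ23}*{Section~3}. Among the intermediate fields $K\subseteq M\subseteq K^{1/p}$ there is a minimal one, call it $L$, for which $X_M$ ceases to be normal; using that such base change is a universal homeomorphism (so $X_M$ stays irreducible) and preserves $(S_2)$, one checks that $X_L$ is still integral, being non-normal only in codimension one. That such an $L$ is already found inside $K^{1/p}$, rather than only in a higher $K^{1/p^n}$, is where Tate's genus-change theorem enters: geometric non-reducedness forces the arithmetic genus to strictly drop, and a drop from $p_a(X)=1$ is exhausted in a single purely inseparable step. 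Passing to the normalization $Y\to X_L$, the field $K':=H^0(Y,\mathcal O_Y)$ is a purely inseparable extension of $L$ inside $K^{1/p}$ with $L\subsetneq K'$ (again because $X$ is geometrically non-reduced), and $Y$ is by construction the normalization of $(X_{K'})_{\rm red}$.

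For part~(2), write $\pi\colon Y\to X$ for the composite. Applying \eqref{eq:compds} with $S=\Spec K$, $T=\Spec K'$ and observing that a $g$-exceptional divisor on the curve $Y$ is necessarily zero, we get $\pi^*K_X\simQ K_Y+(p-1)C$ with $C\ge 0$ a Weil divisor. Since $K_X\sim 0$ we have $\pi^*K_X\sim 0$, so taking degrees over $K'$ gives $0=(2p_a(Y)-2)+(p-1)\deg_{K'}C$, hence $(p-1)\deg_{K'}C=2-2p_a(Y)\ge 0$ and therefore $p_a(Y)\le 1$. The case $p_a(Y)=1$ would force $C=0$, contradicting the genus drop from geometric non-reducedness; so $p_a(Y)=0$ and $(p-1)\deg_{K'}C=2$, and in particular $p\in\{2,3\}$. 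Finally $\Pic^0_{Y/K'}$ has dimension $p_a(Y)=0$, so it is trivial and $\Pic(Y)$ is torsion free; hence the relation $K_Y+(p-1)C\simQ 0$ upgrades to $K_Y+(p-1)C\sim 0$, which is part~(2).

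For part~(3), we feed $p_a(Y)=0$ into Proposition~\ref{prop:ga0}. If $p=3$, then $\deg_{K'}C=1$, so $C=\mathfrak q$ is a single $K'$-rational point; then $Y$ has a rational point, so $Y\cong\mathbb P^1_{K'}$ by Proposition~\ref{prop:ga0}(2), and a local computation of $\pi$ over the unique non-normal point $\mathfrak p$ of $X_L$ leaves only $\pi^*\mathfrak p=\mathfrak q$ or $\pi^*\mathfrak p=3\mathfrak q$. If $p=2$, then $\deg_{K'}C=2$ and $C$ is $\mathfrak q_1+\mathfrak q_2$, or $2\mathfrak q$ with $\mathfrak q$ rational, or a single point of degree two; in the first two cases $Y$ has a $K'$-rational point, so $Y\cong\mathbb P^1_{K'}$, while in the last case Proposition~\ref{prop:ga0}(3) forces $Y$ to be either a smooth conic (possibly $\mathbb P^1_{K'}$) or the geometrically non-reduced quadric $sx^2+ty^2+z^2=0$, the regularity of $Y$ being equivalent to $[K'(s,t):K']=4$.

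The hardest part will be part~(1): correctly selecting $L$ inside $K^{1/p}$, verifying that $X_L$ is integral but not normal, and that its normalization already picks up sections over a field $L\subsetneq K'\subseteq K^{1/p}$, which rests on the careful genus-drop bookkeeping of \cite{CWZ23}*{Section~3}. The local ramification analysis separating the subcases in part~(3) is a secondary technical point, whereas part~(2) is essentially formal once \eqref{eq:compds} and Proposition~\ref{prop:ga0} are available.
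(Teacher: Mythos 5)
The paper does not actually prove this proposition; it states it as a collection of facts and refers to \cite{CWZ23}*{Section~3}, so there is no in-paper argument to compare against. Your reconstruction follows the route that citation takes — choose a height-one base change over which $X$ becomes integral but non-normal, normalize, apply the conductor formula, and feed the resulting arithmetic-genus-zero curve into Proposition~\ref{prop:ga0} — and parts (2) and (3) are sound as outlined (in particular the degree count, the vanishing of $p_a(Y)$, and the upgrade from $\simQ$ to $\sim$ via Riemann--Roch on a genus-zero curve are all fine).

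One step needs repair: in part (2) you apply \eqref{eq:compds} with $T=\Spec K'$, but the hypothesis there is that $X_{K(T)}$ be \emph{integral}, and $X_{K'}$ is in fact non-reduced. Indeed, the element $a^{1/p}\in K'\setminus L$ witnessing $L\subsetneq K'$ already lies in $K(X_L)=K(Y)$, so $X_{L(a^{1/p})}$, and hence $X_{K'}=X_{L(a^{1/p})}\otimes_{L(a^{1/p})}K'$, acquires nilpotents. You should instead invoke \eqref{eq:compds} with $T=\Spec L$, where $X_L$ is integral by construction and $Y=(X_L)^\nu$, and then observe that $K_{Y/L}$ and $K_{Y/K'}$ agree as divisor classes since they differ by the pullback of the one-dimensional $K'$-module $\omega_{K'/L}$. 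A second, milder remark: the appeal to Tate's genus-change theorem in part (1) is unnecessary. That non-normality already appears inside $K^{1/p}$ follows from the elementary facts that $K(X)\otimes_K K^{1/p}$ is non-reduced (inseparability of $K(X)/K$ is detected by height-one extensions) and that $H^0(X_M,\mathcal O_{X_M})=M$ by flat base change for every intermediate $M$: along any chain $K=L_0\subset\cdots\subset L_n=K^{1/p}$ with steps of degree $p$, if $X_{L_{j-1}}$ were normal then $a^{1/p}\notin K(X_{L_{j-1}})$ for $L_j=L_{j-1}(a^{1/p})$, so $X_{L_j}$ would stay integral; hence the last integral member of the chain is non-normal and may be taken as $X_L$, and the same computation yields $L\subsetneq K'$. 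With these adjustments your outline is correct; the remaining technical content, which you rightly flag, is the local analysis at the non-normal point in part (3).
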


\begin{rmk}\label{rmk:pdeg2}%
  We note that if $\mathop{\rm pdeg} K := [K^{1/p}:K] = 2$, then {\it the case {\rm(c2)} in Proposition~\ref{prop:curve-nonreduced} does not occur}.
  This follows from Schröer's classification of regular genus-one curves \cite{Sch22}.
  Indeed, by \cite[Theorem~2.3]{Sch22}, the $p$-degree of $K$ is at least $r+1$, where $r$ is the geometric generic embedding dimension (defined as the embedding dimension of the local Artin ring $\mathcal O_{X,\eta} \otimes_K K^{\rm perf}$, see \cite[Section~1]{FS20}).
  If $Y = (X\times_K K^{1/p})^\nu_{\rm red}$ is not smooth, then by \cite[Theorem~2.3]{Sch22} the second Frobenius base-change $X\times_K K^{1/p^2}$ is isomorphic to some standard model $C^{(i)}_{r,F,\Lambda}$ with $i = 2$ (see \cite{Sch22} for the definition of $C^{(i)}_{r,F,\Lambda}$).
  Note that $i\le r$ by Schröer's construction (\cite[page~8]{Sch22}).
  Thus, the case (c2) occurs only when $\mathop{\rm pdeg} K \ge r + 1 \ge 3$.
\end{rmk}

\section{Constructions of semi-ample divisors}
This section concerns the construction of semi-ample divisors.
Let $X$ be a $K$-trivial variety and denote by $a_X\colon X\to A$ the Albanese morphism of $X$.
In case $\dim a_X(X) = 1$, we construct a semi-ample divisor which is relatively ample over $A$; and in case $a_X$ has relative dimension one, we prove a semi-ampleness criterion for divisors with numerical dimension one.
We use these results to derive the second fibration $g\colon X\to Z$ which is transversal to $a_X$.

\subsection{A construction of semi-ample divisors}
The construction we present here was recently developed in \cite{EP23, Eji23} by use of a powerful positivity engine.  Similar approaches were used to treat surfaces (\cite[Theorem 8.10]{BadescuAS}) and threefolds equipped with fibrations to elliptic curves  (\cite[Section 4.4]{Zh20}).
In the settings of \cite{Eji23, EP23}, a pair $(X,\Delta)$ is assumed to be strongly $F$-regular and the Cartier index of $K_X+\Delta$ is assumed to be indivisible by $p$ (i.e., $K_X+\Delta$ is a $\mathbb{Z}_{(p)}$-Cartier). By refining their argument, we can slightly relax the $F$-regularity condition and replace the second condition by that $X$ is $\QQ$-factorial.  Here, we only explain how to modify the argument.

First, we recall a positivity result due to Ejiri, which is crucial for proving Theorem~\ref{thm:pos-EP}, one of the main results of this subsection.
\begin{thm}[{\cite[Corollary~6.5, Examples~5.7, 5.8]{Eji24}}]\label{thm:pos-Eji}
  Let $f\colon X\to Y$ be a surjective morphism between normal projective varieties and let $a\colon Y \to A$ be a generically finite morphism to an abelian variety. Let $\Delta \geq 0$ be a $\mathbb{Z}_{(p)}$-divisor on $X$. Let $V'\subseteq A$ be an open dense subset, $V= a^{-1}V'$ and $U = f^{-1}V$. Assume that
  \begin{itemize}
    \item $(U, \Delta|_U)$ is $F$-pure;
    \item $K_U + \Delta|_U$ is $\mathbb{Q}$-Cartier and  $f$-ample; and
    \item $V\to V'$ is a finite morphism.
  \end{itemize}
  Let $H'$ be a free ample symmetric divisor on $A$ and $H= a^*H'$. Let $i$ be a positive integer such that $i(K_X + \Delta)$ is integral.
  Then there exists an integer $m_0$ such that for any $m\geq m_0$,
  \begin{itemize}
    \item[\rm(i)] $f_*\mathcal{O}_X(im(K_X + \Delta))\otimes \mathcal{O}_Y((\dim Y + 1)H)$ is globally generated over $V$; and
    \item[\rm(ii)] $\mathbb{B}_-(f_*\mathcal{O}_X(im(K_X + \Delta))) \subseteq Y\setminus V$ {\em (\em see \em\cite[Section~4.1]{Eji24} \em for the definition of $\mathbb B_-$\em)}.
  \end{itemize}
\end{thm}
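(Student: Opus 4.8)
Since this statement is quoted from \cite{Eji24}*{Corollary~6.5}, I will only describe how I would organize the argument; it splits into a \emph{positivity} input over the open locus and an \emph{effective global generation} step on the abelian variety $A$. For the positivity input I would first pass to the composition $h:=a\circ f\colon X\to A$. As $A$ is an abelian variety, $K_A\sim 0$, so $K_X+\Delta = K_{X/A}+\Delta$; and since $a|_V\colon V\to V'$ is finite, the $f$-ampleness of $K_U+\Delta|_U$ over $V$ upgrades to relative ampleness of $K_{U/V'}+\Delta|_U$ over $V'$. Together with $F$-purity of $(U,\Delta|_U)$, this is exactly the input of the $F$-pure analogue of Viehweg weak positivity (\cite{Pat14}, \cite{Eji17}, \cite{Eji19}): applying the relative Frobenius trace map to high powers of the relatively ample log-canonical bundle, one deduces that for each $i$ with $i(K_X+\Delta)$ integral and each $m$ the sheaf $\mathcal E_m := h_*\mathcal O_X(im(K_X+\Delta))$ is weakly positive over $V'$ on $A$. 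By the projection formula and the finiteness of $a$ over $V'$, the claimed global generation of $f_*\mathcal O_X(im(K_X+\Delta))\otimes\mathcal O_Y((\dim Y+1)H)$ over $V$ then reduces to that of $\mathcal E_m\otimes\mathcal O_A((\dim Y+1)H')$ over $V'$.

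The second step would be an effective Castelnuovo--Mumford type regularity statement on $A$: a sheaf that is weakly positive over an open set $V'$ should become, after twisting by roughly $\dim Y+1$ copies of a free ample symmetric divisor $H'$, globally generated over $V'$, with the locus of non-generation contained in $A\setminus V'$; letting $m\to\infty$ this then yields (i) and the inclusion $\mathbb B_-(\mathcal E_m)\subseteq Y\setminus V$ in (ii). Over $\CC$ this is the Pareschi--Popa regularity package ($\mathcal E\otimes H'$ being $0$-regular with respect to $H'$, and $\dim A$ further globally generated copies of $H'$ forcing global generation). In characteristic $p$ the needed Kodaira-type vanishing can fail, so I would instead run the regularity argument after the iterated absolute Frobenius base changes $A^{\frac1{p^e}}\to A$, along which a symmetric polarization is only multiplied by $p^e$, and then use $F$-purity to split the corresponding Frobenius pushforwards and descend global generation back to $A$; this is the content of \cite{Eji24}*{Examples~5.7, 5.8}.

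The hard part is precisely this last step in positive characteristic. Without Kodaira vanishing one cannot simply invoke Mumford regularity, so the argument must be rerouted through Frobenius, which forces one to propagate the $F$-purity hypothesis to all the iterated Frobenius pushforwards and to check its compatibility with relative ampleness over $V'$. Equally delicate is the bookkeeping of where generation can fail, so that the extra twist $(\dim Y+1)H'$ is only needed over $Y\setminus V$ and the lower base locus is confined to $Y\setminus V$. For the complete proof I refer the reader to \cite{Eji24}.
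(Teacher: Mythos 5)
The paper offers no proof of this statement: it is imported verbatim from \cite{Eji24}*{Corollary~6.5, Examples~5.7, 5.8}, so your decision to defer to that reference mirrors exactly what the authors do. Your two-step outline — weak positivity of the pushforward to $A$ over $V'$ coming from $F$-purity plus relative ampleness of $K_U+\Delta|_U$, followed by a Frobenius-rerouted Mumford-regularity argument on the abelian variety to get global generation after the $(\dim Y+1)H$ twist with failure confined to $Y\setminus V$ — is consistent with the strategy of the cited results, and nothing more is required here since the theorem is used as a black box in the paper.
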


We adapt \cite[Theorem~7.1]{EP23} to the following setting, which originally requires that $K_X + \Delta$ is a $\mathbb{Z}_{(p)}$-Cartier divisor and that $(X, \Delta)$ is $F$-pure.
\begin{cor}\label{cor:pos-nef}
  Let $X$ be a normal $\mathbb{Q}$-factorial projective variety of dimension $n$ and $f\colon X \to Y$ a fibration to a smooth curve $Y$ with $g(Y) \geq 1$. Let $\Delta\geq 0$ be a divisor on $X$ and $L$ be a nef Cartier divisor on $X$.
  Assume that $(X_{\eta}, \Delta_{\eta})$ is strongly $F$-regular and that $K_X + \Delta + L$ is $f$-nef. Then $K_X + \Delta + L$ is nef.
\end{cor}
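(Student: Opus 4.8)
The plan is to transcribe the proof of \cite{EP23}*{Theorem~7.1}, feeding in Ejiri's Theorem~\ref{thm:pos-Eji} as the positivity engine, and to check that the two weakenings of the hypotheses here are harmless: $\QQ$-factoriality in place of $\ZZ_{(p)}$-Cartierness of $K_X+\Delta$, and strong $F$-regularity of $(X_\eta,\Delta_\eta)$ in place of global $F$-purity of $(X,\Delta)$. Write $N:=K_X+\Delta+L$ and let $a_Y\from Y\to A$ be the Albanese morphism, which is a closed immersion since $g(Y)\ge1$; in particular $a_Y^*(\text{ample})$ has positive degree on $Y$ and any base change $V\to V'$ occurring below is finite automatically. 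First I would reduce to the case where $N$ is $f$-ample. For an ample Cartier divisor $H$ on $X$ and rational $\varepsilon>0$, the divisor $N_\varepsilon:=N+\varepsilon H=K_X+\Delta+(L+\varepsilon H)$ is $f$-ample (an $f$-nef plus an $f$-ample divisor), while $L+\varepsilon H$ is an ample $\QQ$-divisor; picking a general effective $D\in|m(L+\varepsilon H)|$ with $m$ coprime to $p$ and $m(L+\varepsilon H)$ very ample, and setting $\Delta_\varepsilon:=\Delta+\tfrac1m D$, one obtains an effective $\ZZ_{(p)}$-divisor with $K_X+\Delta_\varepsilon\simQ N_\varepsilon$ $\QQ$-Cartier (this is where $\QQ$-factoriality enters) and $f$-ample, and — by a Bertini theorem for strong $F$-regularity applied over the generic fibre — $(X_\eta,(\Delta_\varepsilon)_\eta)$ is still strongly $F$-regular. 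Since the nef cone of $X$ is closed and $N_\varepsilon\to N$ as $\varepsilon\downarrow 0$, it is enough to prove nefness of every $N_\varepsilon$; so I may assume from now on that $\Delta$ is an effective $\ZZ_{(p)}$-divisor, $(X_\eta,\Delta_\eta)$ is strongly $F$-regular, and $N=K_X+\Delta$ is $\QQ$-Cartier and $f$-ample.

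Next I would argue by contradiction. If $N$ is not nef, there is an irreducible curve $C\subset X$ with $N\cdot C=-\delta<0$; since $N$ is $f$-ample, $C$ is not contained in a fibre, so $f|_C\from C\to Y$ is finite of some degree $d\ge1$. By openness of strong $F$-regularity in fibres, after shrinking around the generic point there is a dense open $V'\subseteq A$ such that, with $V:=a_Y^{-1}V'$ and $U:=f^{-1}V$, the pair $(U,\Delta|_U)$ is strongly $F$-regular, hence $F$-pure; together with $K_U+\Delta|_U$ being $\QQ$-Cartier and $f$-ample and $V\to V'$ finite, this is precisely the configuration required by Theorem~\ref{thm:pos-Eji}. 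Applying it with a free ample symmetric divisor $H'$ on $A$, $H_Y:=a_Y^*H'$, and $i>0$ chosen with $i(K_X+\Delta)$ integral and divisible by a relative Cartier index of $N$, and using $\dim Y=1$, I obtain an integer $m_0$ such that $f_*\mathcal O_X(imN)\otimes\mathcal O_Y(2H_Y)$ is globally generated over $V$ for all $m\ge m_0$.

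The contradiction then comes from intersecting with $C$. Enlarging $m_0$ so that $imN$ is $f$-globally generated, the map $f^*f_*\mathcal O_X(imN)\twoheadrightarrow\mathcal O_X(imN)$ is surjective on all of $X$; tensoring with $f^*\mathcal O_Y(2H_Y)$ exhibits $\mathcal O_X(imN)\otimes f^*\mathcal O_Y(2H_Y)$ as a quotient of $f^*\bigl(f_*\mathcal O_X(imN)\otimes\mathcal O_Y(2H_Y)\bigr)$, which is globally generated over $U$. A global section nonvanishing at a point of $C\cap U\neq\varnothing$ then yields an effective $\QQ$-Cartier divisor $G\simQ imN+f^*2H_Y$ with $C\not\subseteq\Supp G$, so
\[
  0\le G\cdot C = imN\cdot C + 2d\deg_Y H_Y = -im\delta + 2d\deg_Y H_Y ,
\]
forcing $m\le 2d\deg_Y H_Y/(i\delta)$, which contradicts $m\ge m_0$ once $m_0$ is large. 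Hence $N$ is nef, and undoing the reduction of the first paragraph finishes the proof.

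I expect the main obstacle to be the reduction in the first paragraph: one must replace the nef $\QQ$-divisor $L+\varepsilon H$ by a $\QQ$-linearly equivalent effective divisor $\tfrac1m D$ whose restriction $\tfrac1m D_\eta$ does not destroy strong $F$-regularity of the generic-fibre pair. This requires a Bertini-type theorem for $F$-singularities adapted to the $F$-finite but non-perfect field $K(Y)$ and to the very ample linear system at hand, and it is exactly here that the hypothesis must be ``strongly $F$-regular on the generic fibre'' — the notion that survives such Bertini arguments — rather than merely ``$F$-pure''. The other modification costs nothing: Theorem~\ref{thm:pos-Eji} asks only that $K_U+\Delta|_U$ be $\QQ$-Cartier, which $\QQ$-factoriality supplies, so the $\ZZ_{(p)}$-Cartierness hypothesis of \cite{EP23} is not needed; and global $F$-purity of $(X,\Delta)$ was used there only to guarantee $F$-purity over a neighbourhood of the generic fibre, which generic strong $F$-regularity already provides.
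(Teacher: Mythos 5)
Your proposal is correct and follows essentially the same route as the paper: perturb by an ample divisor to restore the $\ZZ_{(p)}$-Cartier condition while preserving strong $F$-regularity of the generic fibre (the paper does this by citing \cite{Pat14}*{Lemma~3.15}, which is precisely the Bertini/perturbation statement you flag as the main obstacle), and then run the argument of \cite{EP23}*{Theorem~7.1} powered by Theorem~\ref{thm:pos-Eji}, observing that only $F$-purity over an open subset of $Y$ is needed. The only cosmetic difference is that the paper keeps $L$ as a separate nef summand and invokes the proof of \cite{EP23}*{Theorem~7.1} as a black box, whereas you absorb $L+\varepsilon H$ into the boundary and unfold that proof.
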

\begin{proof}
We only need to prove that for every ample $\mathbb{Q}$-divisor $A$ on $X$, $K_X + \Delta + L + A$ is nef. Fix an ample divisor $A$ on $X$. By \cite[Lemma 3.15]{Pat14}, we can find an ample $\mathbb{Q}$-divisor $A'> 0$, such that
\begin{itemize}
  \item[\rm(i)] $A-A'$ is ample,
  \item[\rm(ii)] $K_X + \Delta'$ is a $\mathbb{Z}_{(p)}$-Cartier divisor, where $\Delta'= \Delta + A'$, and
  \item[\rm(iii)] $(X_{\eta}, \Delta'_{\eta})$ is strongly $F$-regular.
\end{itemize}
Here, we remark that in order to apply Theorem \ref{thm:pos-Eji}, we only require the pair $(X, \Delta)$ to be $F$-pure over an open subset of $Y$, which is guaranteed by condition (iii). Therefore, since $K_X + \Delta' + L$ is $f$-nef, we can apply the same proof of \cite[Theorem~7.1]{EP23} to the pair $(X,\Delta')$ and $L$, which yields that $K_X + \Delta' + L\simQ K_X + \Delta + L + A'$ is nef. From this, we conclude that $K_X + \Delta + L$ is nef.
\end{proof}

\begin{thm}\label{thm:pos-EP}
Let $X$ be a normal $\mathbb{Q}$-factorial projective variety of dimension $n$ and $f\colon X \to Y$ a fibration to a smooth curve $Y$ with $g(Y) \geq 1$. Let $\Delta\geq 0$ be a divisor on $X$. Assume that
\begin{itemize}
  \item[\rm a)] $-(K_{X}+\Delta)$ is nef;
  \item[\rm b)] the Cartier index of $K_{X_{\eta}}+\Delta_{\eta}$ is indivisible by $p$; and
  \item[\rm c)] $(X_{\eta}, \Delta_{\eta})$ is strongly $F$-regular.
\end{itemize}
Let $A$ be an ample divisor on $X$. Choose positive integers $a,b$ such that $(aA - bF)^n = a^nA^n - na^{n-1}bA^{n-1}F = 0$, where $F$ is a fiber of $f$.

Then
\begin{itemize}
  \item[\rm(i)] The divisor $D=aA - bF$ is nef with $v(D) = n-1$.
  \item[\rm(ii)] $g(Y)=1$.
  \item[\rm(iii)] For any sufficiently divisible integer $m>0$, the sheaf $f_*(\mathcal O_X(mD))$ is a numerically flat vector bundle, that is, both $f_*(\mathcal O_X(mD))$ and its dual $(f_*(\mathcal O_X(mD)))^\vee$ are nef. By \cite{Oda71}, there exists an isogeny $\pi\colon Z \to Y$ from an elliptic curve $Z$ such that $\pi^*f_*\mathcal{O}_X(mD) \cong \bigoplus_i L_i$, where $L_i \in \mathrm{Pic}^0(Z)$.
  \item[\rm(iv)] Assume moreover that
    \begin{itemize}
      \item[\rm d)] $(X, \Delta)$ is strongly $F$-regular, the Cartier index of $K_{X}+\Delta$ is indivisible by $p$ and $-(K_X + \Delta)$ is numerically semi-ample.
    \end{itemize}
Then there exists some $L \in \mathrm{Pic}^0 (Y )$ such that $D + f ^* L$ is semi-ample.
\end{itemize}
\end{thm}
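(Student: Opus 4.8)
The plan is to establish the four assertions in turn, the genuine work lying in~(i) and~(iv).

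For~(i): since $F^2\equiv 0$, the equations cutting out $a,b$ give $D^n=0$ together with $D^{n-1}\cdot A=\tfrac{a^{n-1}}{n}A^n>0$, so once $D$ is shown to be nef the equality $\nu(D)=n-1$ is immediate. To prove nefness I would argue by contradiction: an irreducible curve $C$ with $D\cdot C<0$ satisfies $aA\cdot C>0$, hence $bF\cdot C>aA\cdot C>0$, so $C$ is $f$-horizontal; then its normalization $\widetilde C\to Y$ is finite surjective, so $g(\widetilde C)\ge g(Y)\ge 1$ by Riemann--Hurwitz. Such a curve is excluded by a positivity/semistability-type constraint on $f_*\mathcal O_X(mA)$ (for $m\gg0$) which the nefness of $-(K_X+\Delta)$ together with $g(Y)\ge 1$ forces, and which is exactly sharp for the class $aA-bF$; here the strong $F$-regularity of $(X_\eta,\Delta_\eta)$ and the $\ZZ_{(p)}$-Cartier hypothesis on the generic fibre are what let the positivity engine of \cites{EP23,Eji23} operate, so this step refines \cite{EP23}*{Theorem~7.1}. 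Both hypotheses are genuinely needed --- an $\mathbb F_1$-fibration over $\mathbb P^1$ shows $aA-bF$ can fail to be nef when $g(Y)=0$ --- and \textbf{this control of $f$-horizontal curves is the main obstacle of the theorem}.

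Next I would pass to direct images. For $m$ sufficiently divisible, $f$-ampleness of $D$ gives $R^{>0}f_*\mathcal O_X(mD)=0$ and $f$-freeness of $mD$, so $\mathcal E_m:=f_*\mathcal O_X(mD)$ is locally free of rank $\chi(X_\eta,mD_\eta)$; combining the positivity engine (Theorem~\ref{thm:pos-Eji} and Corollary~\ref{cor:pos-nef}) with $D^n=0$ and the nefness of $-(K_X+\Delta)$ one shows $\mathcal E_m$ is numerically flat (both $\mathcal E_m$ and $\mathcal E_m^\vee$ nef). For~(ii): then $\deg\mathcal E_m=0$, so Riemann--Roch on $Y$ and Leray give
\[
0=\deg\mathcal E_m=\chi(X,mD)-\bigl(1-g(Y)\bigr)\,\chi(X_\eta,mD_\eta).
\]
Expanding both sides as polynomials in $m$ by Grothendieck--Riemann--Roch and using $D^n=0$ to kill the $m^n$-term, the $m^{n-1}$-coefficient reads $\tfrac12 D^{n-1}\cdot(-K_X)=\bigl(1-g(Y)\bigr)\,a^{n-1}(A^{n-1}\cdot F)$; the left side is $\ge 0$ because $-K_X=-(K_X+\Delta)+\Delta$ is nef-plus-effective and $D$ is nef, while $a^{n-1}(A^{n-1}\cdot F)>0$, so $g(Y)\le 1$ and hence $g(Y)=1$. (One may also argue via the canonical bundle formula: a general fibre has $p_a(X_\eta)\le 1$ by nefness of $-(K_{X_\eta}+\Delta_\eta)$, and when $p_a(X_\eta)=1$ one gets $K_{X/Y}+\Delta\simQ f^*L_0$ with $\deg L_0\ge 0$, so nefness of $-(K_X+\Delta)\simQ -f^*(L_0+K_Y)$ forces $2g(Y)-2\le\deg(L_0+K_Y)\le 0$.)

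For~(iii): $Y$ being now an elliptic curve, numerical flatness makes $\mathcal E_m$ a direct sum of Atiyah-type bundles $F_{r_i}\otimes M_i$ with $M_i\in\mathrm{Pic}^0(Y)$, and by \cite{Oda71} a suitable isogeny $\pi\colon Z\to Y$ from an elliptic curve splits each $\pi^*F_{r_i}$ into line bundles, so $\pi^*\mathcal E_m\cong\bigoplus_i L_i$ with $L_i\in\mathrm{Pic}^0(Z)$. For~(iv): since $\mathrm{Pic}^0(Y)$ is divisible I choose $L\in\mathrm{Pic}^0(Y)$ with $mL\cong M_{i_0}^\vee$ for some $i_0$, so that $H^0(X,m(D+f^*L))=H^0(Y,\mathcal E_m\otimes mL)\ne 0$; then $m(D+f^*L)$ is effective, nef (as $f^*L\equiv 0$ and $D$ is nef), and $f$-semi-ample. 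To upgrade it to a semi-ample divisor I would exploit the additional hypotheses: numerical semi-ampleness of $-(K_X+\Delta)$ produces a morphism $h\colon X\to W$ with $-(K_X+\Delta)\equiv h^*(\text{ample})$, and on a fibre of $h$ --- where $-(K_X+\Delta)$ is numerically trivial, so the geometry is essentially $K$-trivial --- the restriction of $D+f^*L$ is semi-ample by a lower-dimensional instance of the same circle of ideas; a positive-characteristic semi-ampleness criterion (Keel's theorem on the non-big locus, with the base-point-free step carried out using that $(X,\Delta)$ is strongly $F$-regular and $K_X+\Delta$ is $\ZZ_{(p)}$-Cartier) then promotes $D+f^*L$ to a semi-ample divisor. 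Alongside the nefness in~(i), this last upgrade is where the real difficulty sits.
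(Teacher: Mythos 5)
Your outline reproduces the skeleton of the argument (a nef $D$ with $\nu(D)=n-1$, numerical flatness of $f_*\mathcal O_X(mD)$, Oda's splitting, a twist by a degree-zero line bundle), but the two steps you yourself flag as the real content are not actually carried out, and in one place the logic is circular. For (i), the nefness of $D$ does not come from any ``semistability-type constraint on $f_*\mathcal O_X(mA)$'': an arbitrary ample divisor $A$ has no a priori relation to the positivity engine, which only controls adjoint-type classes $K+\Delta+(\text{nef})$. The missing idea is to \emph{create} such a relation: for an ample $\QQ$-divisor $H$ on $Y$ the class $D+f^*H$ is big and $f$-ample, so one chooses an effective $\Gamma\simQ D+f^*H$ and a small $\epsilon>0$ with $(X_\eta,(\Delta+\epsilon\Gamma)_\eta)$ still strongly $F$-regular, writes
\[
\epsilon(D+f^*H)\;\equiv\;K_X+(\Delta+\epsilon\Gamma)+\bigl(-(K_X+\Delta)\bigr),
\]
and applies the relative-to-global nefness criterion (Corollary~\ref{cor:pos-nef}); letting $H\to0$ gives the nefness of $D$. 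Without a device of this kind your contradiction argument with an $f$-horizontal curve has nothing to run on.

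For (ii), your Riemann--Roch computation needs $\deg f_*\mathcal O_X(mD)=0$, i.e.\ numerical flatness, which you place \emph{before} knowing $g(Y)=1$; but the proof of numerical flatness (following \cite{EP23}*{Theorems~7.4 and 7.5}) is carried out only after $g(Y)=1$ is known, and mere nefness of $f_*\mathcal O_X(mD)$ gives $\deg\ge0$, which is the wrong inequality for your purpose. (Moreover Grothendieck--Riemann--Roch in the form you invoke requires $X$ smooth, whereas $X$ is only normal and $\QQ$-factorial; and the parenthetical alternative misreads the setup, since $X_\eta$ has dimension $n-1$, not $1$.) The paper's route avoids all of this: if $g(Y)\ge2$ then $K_Y$ is big, and since $D$ is nef, $f$-ample and $D-(K_X+\Delta)=D-(K_{X/Y}+\Delta)-f^*K_Y$ is nef, \cite{Zha19a}*{Theorem~1.5} forces $D$ to be big, contradicting $\nu(D)=n-1$. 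Finally, for (iv) the appeal to Keel's theorem cannot work as stated: $D+f^*L$ has numerical dimension $n-1$, so it is big on no component of $X$ and its exceptional locus is all of $X$, making the criterion vacuous; producing one effective member $m(D+f^*L)\geL0$ is far from semi-ampleness, and the actual argument is extracted from the proof of \cite{Eji23}*{Theorem~6.1}. As it stands, (i), (ii) and (iv) all contain genuine gaps.
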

\begin{proof}
First, we borrow the argument of \cite[Theorem~7.3]{EP23} to show that the divisor $D$ is nef as follows.
Take an ample $\mathbb{Q}$-divisor $H$ on $Y$. Let us show that $D+ f^*H$ is nef. By the construction of $D$, the divisor $D+ f^*H$ is big and $f$-ample. Take an effective divisor $ \Gamma \simQ D+ f^*H$ and a small rational number $\epsilon >0$ such that $(X, \Delta'= \Delta + \epsilon \Gamma)$ is strongly $F$-regular on $X_{\eta}$. Since $-(K_X +  \Delta)$ is nef, Corollary \ref{cor:pos-nef} applies and shows that
$$\epsilon (D+ f^*H)  \equiv K_X +  \Delta' + (-(K_X +  \Delta)),$$
is a nef divisor. Therefore, we conclude that $D$ is nef, and hence $\nu(D) = n-1$ by $D^n=0$. This proves (i).

Next, we prove (ii): $g(Y) = 1$. Otherwise, $H = K_Y$ is big. Since $D$ is nef and $f$-ample and $D-(K_{X/Y}+\Delta) -f^*K_Y = D-(K_{X}+\Delta) $ is nef, we can apply \cite[Theorem~1.5]{Zha19a} to show that $D$ is big, which contradicts $\nu(D) = n-1$.

Having proved $\nu(D) = n-1$ and $g(Y)=1$, by the same argument of \cite[Theorem~7.4 and Theorem~7.5]{EP23} we can show that $f_*\mathcal{O}_X(mD)$ is numerically flat. This proves (iii).

Finally, we can conclude the assertion (iv) from the proof of \cite[Theorem~6.1]{Eji23} under these conditions.
\end{proof}

\begin{rmk}
For the assertion (iv) in the theorem above, it seems not easy to drop the Cartier index condition in the proof of \cite[Theorem~6.1]{Eji23}.
\end{rmk}

When $\mathop{\rm char}k\geq 5$, we may run the minimal model program for $3$-dimensional klt pairs (\cite{HX15,Bir16,HW22}).   Taking advantage of this, the arguments of \cite[Section~4.4]{Zh20} yields the following theorem.
\begin{thm}\label{thm:3fold-rdim2-semiample}
  Assume $p=\mathop{\rm char}k\geq 5$. Let $(X, \Delta)$ be a normal projective $\mathbb{Q}$-factorial three-dimensional klt pair with $K_X + \Delta \equiv 0$. Let $f\colon X\to Y$ be a fibration to a curve with $g(Y) \geq 1$.
  Let $A$ be an ample divisor on $X$ and choose positive integers $a,b$ such that $(aA - bF)^3 = 0$, where $F$ is a fiber of $f$. 
  Then there exists some $L \in \mathrm{Pic}^0 (Y )$ such that $aA - bF+ f ^* L$ is semi-ample.
\end{thm}

\subsection{A semi-ampleness criterion}
If $X$ is a $K$-trivial variety equipped with a quasi-elliptic fibration $f\colon X\to A$ to an abelian variety, the non-smooth locus of $f$ provides a divisor. To prove this divisor is semi-ample, we abstract a semi-ampleness criterion from the argument of \cite[Section~4.2]{Zh20}, which proves a nonvanishing result up to a twist by a numerically trivial line bundle. In \cite[Section~4.2]{Zh20}, the author treated only threefolds and used results of minimal model program. But our situation is special, we can avoid running MMP in the argument.

We first prove the following nonvanishing result by use of \cite[Theorem~3.7]{Zh20} and a similar argument of \cite[Section~4.2]{Zh20}.

\begin{thm}\label{thm:non-van}
  Let $X$ be a normal projective variety equipped with a surjective morphism  $f\colon X\to A$ to an abelian variety $A$ of dimension $d$. Let $\Delta \geq 0$ be an effective $\QQ$-divisor on $X$ and $D$ a Cartier divisor on $X$. Assume that
\begin{itemize}
  \item[\rm(a)] $K_X+\Delta$ is a $\mathbb{Q}$-Cartier $\mathbb{Q}$-divisor with the Weil index of $K_X+\Delta$ being indivisible by $p$;
  \item[\rm(b)] the Cartier index of $(K_X+\Delta)|_{X_{\eta}}$  is indivisible by $p$;
  \item[\rm(c)] the divisor $D-(K_X+\Delta)$ is nef and relatively ample over $A$;
  \item[\rm(d)] $r=\dim_{K(\eta)}S_{\Delta}^0(X_{\eta}, D|_{X_{\eta}})> 0$ {\em(\em see \em\cite[page~10]{Zha19a} \em for the definition of $S_{\Delta}^0$\em)}.
\end{itemize}
Then
\begin{itemize}
  \item[\rm(1)] $V^0(f_*\mathcal{O}_X(D)) = \{\alpha \in \widehat{A}=\mathrm{Pic}^0(A)\mid h^0(f_*\mathcal{O}_X(D)\otimes \mathcal{P}_{\alpha}) > 0\} \neq \emptyset$, where $\mathcal{P}$ denotes the Poincar\'{e} line bundle over $A\times \widehat{A}$; and
  \item[\rm(2)] if $\dim V^0(f_*\mathcal{O}_X(D)) =0$, then there exist a subsheaf $\mathcal{F} \subseteq f_*\mathcal{O}_X(D)$ of rank $r$ such that $\mathcal{F}|_{X_{\eta}}=S_{\Delta}^0(X_{\eta}, D|_{X_{\eta}})$, an isogeny $\pi\colon A_1\to A$ of abelian varieties, some $P_1, \ldots, P_r \in \mathrm{Pic}^0(A_1)$ and a generically surjective homomorphism
$$\beta\colon  \bigoplus_iP_i \to \pi^*\mathcal{F}.$$
\end{itemize}
\end{thm}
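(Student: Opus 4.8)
The plan is to mimic the strategy of the generic vanishing / nonvanishing machinery for fibrations over abelian varieties, combining the relative version of Fujita-type positivity (encoded in Theorem~\ref{thm:pos-Eji}) with the theory of $S^0$-sections due to Schwede--Tucker (and its version in \cite{Zh20}). The input $r = \dim_{K(\eta)} S^0_\Delta(X_\eta, D|_{X_\eta}) > 0$ tells us that, after restricting to the generic fiber, $D$ has enough Frobenius-stable sections; the goal is to spread this out over $A$. First I would set up the sheaf $\mathcal V_m := f_*\mathcal O_X(D) $ (or rather the Frobenius-stable image $S^0$-subsheaf inside twists of it under iterated Frobenius pushforwards, as in \cite{Zh20}*{Theorem~3.7}), using condition~(c): since $D - (K_X+\Delta)$ is nef and $f$-ample, the divisor $D - (K_X+\Delta)$ behaves like a relatively ample divisor and the relevant $S^0$-sheaf on the generic fiber deforms to a coherent subsheaf $\mathcal F \subseteq f_*\mathcal O_X(D)$ with $\mathcal F|_{X_\eta} = S^0_\Delta(X_\eta, D|_{X_\eta})$ of rank $r$. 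Conditions~(a) and~(b) on the Weil/Cartier indices being prime to $p$ are exactly what is needed to make $K_X+\Delta$ (and its restriction to fibers) play well with Frobenius pushforward, so that the $S^0$-construction is stable and nonzero.

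Next I would invoke the positivity of $\mathcal F$ over $A$. The point is that $D - (K_X+\Delta)$ nef together with $K_X+\Delta$ $\mathbb Q$-Cartier of index prime to $p$ puts us in the situation where Ejiri's positivity theorem (Theorem~\ref{thm:pos-Eji}), applied with $a = \mathrm{id}_A$ (so $Y = A$, $V' = A$, $V = U = X$ after shrinking if necessary), gives that $\mathcal F$ (or an iterated-Frobenius twist of it) is weakly positive / globally generated over a dense open after tensoring with $(\dim A + 1)H'$ for a free ample symmetric $H'$ on $A$, and that its lower base locus is contained in the complement of the locus where everything is ample. Concretely this yields that $\mathcal F$ is a ``$GV$-sheaf''-type object — its cohomological support loci behave as in Green--Lazarsfeld theory — which forces $V^0(f_*\mathcal O_X(D)) = \{\alpha : h^0(\mathcal F \otimes \mathcal P_\alpha) > 0\} \neq \emptyset$. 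This is assertion~(1). The key technical ingredient here is that Frobenius-stable sections survive twisting by torsion-in-the-limit line bundles and that Ejiri's theorem is uniform in $m$, so one can pass to the limit and extract an honest global section of some twist.

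For assertion~(2), suppose $\dim V^0(f_*\mathcal O_X(D)) = 0$, i.e.\ $V^0$ is a nonempty finite set of torsion-ish points $\{\alpha_1,\dots\}$ in $\widehat A$. Then I would argue as in the classical Chen--Jiang / Hacon structure theory adapted to positive characteristic: a $GV$-sheaf whose $V^0$ is zero-dimensional is, after pulling back along an isogeny $\pi\colon A_1 \to A$ that trivializes those torsion points, a quotient of a direct sum of line bundles in $\mathrm{Pic}^0(A_1)$. More precisely, the finitely many $\alpha_i \in V^0$ give maps $\mathcal P_{\alpha_i}^{-1} \to f_*\mathcal O_X(D)$ (via $H^0(\mathcal F\otimes\mathcal P_{\alpha_i}) \neq 0$), and pulling these back by $\pi$ — chosen so that each $\pi^*\mathcal P_{\alpha_i}$ becomes trivial, or rather so that the $\alpha_i$ pull back into a controlled finite subgroup — produces a morphism $\beta\colon \bigoplus_i P_i \to \pi^*\mathcal F$ with $P_i \in \mathrm{Pic}^0(A_1)$; the rank count $r = \dim S^0_\Delta(X_\eta, D|_{X_\eta})$ together with the generic-finiteness of $V\to V'$ (here automatic) gives that $\beta$ is generically surjective on the rank-$r$ subsheaf $\mathcal F$. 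The main obstacle, and where I expect the real work to be, is the passage from the $S^0$-sections on the generic fiber to the genuine subsheaf $\mathcal F \subseteq f_*\mathcal O_X(D)$ together with the verification that $\mathcal F|_{X_\eta}$ is exactly $S^0_\Delta(X_\eta, D|_{X_\eta})$ and not something smaller — this requires controlling base change of $S^0$ under localization at the generic point of $A$ and using that, by Ejiri's theorem, the relevant pushforwards have no lower base locus over a dense open, so that formation of $\mathcal F$ commutes with restriction to $X_\eta$. The index-prime-to-$p$ hypotheses~(a)--(b) are exactly what make the Frobenius-stable image well-behaved under this localization, and the nef-ness in~(c) is what guarantees the positivity needed for the limit argument to converge.
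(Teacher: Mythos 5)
There is a genuine gap at the heart of your proposed route to assertion (1). You want to deduce nonvanishing of $V^0(f_*\mathcal O_X(D))$ by first applying Theorem~\ref{thm:pos-Eji} to get weak positivity / generic global generation of $\mathcal F$ after an ample twist, and then declaring $\mathcal F$ to be a ``$GV$-sheaf'' so that Green--Lazarsfeld theory forces $V^0\ne\emptyset$. In characteristic $p$ this chain breaks: generic vanishing in the Green--Lazarsfeld sense is false in general, and global generation of $\mathcal F\otimes\mathcal O_A((\dim A+1)H)$ over a dense open gives no control on $h^0(\mathcal F\otimes\mathcal P_\alpha)$ for $\alpha\in\Pic^0(A)$. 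Moreover Theorem~\ref{thm:pos-Eji} is stated for the pluricanonical pushforwards $f_*\mathcal O_X(im(K_X+\Delta))$ with $K_X+\Delta$ $f$-ample; here $D$ is an arbitrary Cartier divisor with $D-(K_X+\Delta)$ nef and $f$-ample, so that theorem does not apply as you invoke it (and the paper's proof of this statement does not use it at all). You do name \cite{Zh20}*{Theorem~3.7}, which is indeed the correct engine, but your argument never verifies its hypotheses, and those hypotheses --- not weak positivity --- are where the work lies.

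Concretely, the paper's proof proceeds as follows. Using (a) one fixes $g$ with $(p^g-1)\Delta$ integral, so that for $g\mid e$ the divisor $D_e=(1-p^e)(K_X+\Delta)+p^eD$ is integral and one has trace maps $\mathrm{Tr}^e_D\colon \mathcal F^e:=f_*(F^e_{X*}\mathcal O_X((1-p^e)(K_X+\Delta))\otimes\mathcal O_X(D))\to f_*\mathcal O_X(D)$; their images decrease and stabilize to a subsheaf $\mathcal F$ whose rank is $r$ by (b) and (d) --- this is your $\mathcal F$, and the identification $\mathcal F|_{X_\eta}=S^0_\Delta(X_\eta,D|_{X_\eta})$ is by construction rather than a deformation argument. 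The crucial step is then the vanishing $H^i(A,\mathcal F^e\otimes\widehat H^*)=0$ for $i>0$ and $e$ sufficiently divisible, obtained from hypothesis (c) via the argument verifying condition (C2) in \cite{Zh20}*{Theorem~4.2}; together with global generation of the Fourier--Mukai transforms this lets one apply \cite{Zh20}*{Theorem~3.7} to the tower of trace maps, yielding the injectivity of $\alpha_{\mathcal F}\colon\mathcal F^*\to(-1)_A^*R^0\Psi_{\mathcal P}R^0\Phi_{\mathcal P}D_A(\mathcal F)$ and hence $V^0\ne\emptyset$. Assertion (2), including the generic surjectivity of $\beta$ onto the rank-$r$ sheaf $\pi^*\mathcal F$, is likewise part of the conclusion of \cite{Zh20}*{Theorem~3.7}; your ``rank count gives generic surjectivity'' is not a substitute for that injectivity statement. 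To repair your proposal you would need to replace the weak-positivity/$GV$ step by the verification of these Fourier--Mukai vanishing conditions.
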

\begin{proof}
By the assumption (a), we may fix an integer $g>0$ such that $(p^g - 1)\Delta$ is integral.
For an integer $e>0$ divisible by $g$, setting $D_e=(1-p^e)(K_X + \Delta)+ p^eD$, we have the trace map
$$\mathrm{Tr}^{e}_D: \mathcal{F}^{e}:=f_*(F_{X*}^{e}\mathcal O_X((1-p^e)(K_X + \Delta)) \otimes \mathcal O_X(D)) \cong f_*(F_{X*}^{e}\mathcal O_X(D_e)) \to f_*\mathcal O_X(D),$$
and we denote its image by $\mathcal{F}_0^e$.
Note that for positive integers $e'>e$ divisible by $g$, the trace map $\mathrm{Tr}^{e'}_D$ factors through the trace map (\cite[Section~2.7]{Zh20})
$$\mathrm{Tr}^{e'-e}_{D_e}: \mathcal{F}^{e'}=f_*(F_{X*}^{e'-e}\mathcal O_X((D_{e'})))   \to \mathcal{F}^{e}=f_*\mathcal O_X(D_e),$$
which implies that $\mathcal{F}^{e'}_0\subseteq \mathcal{F}^{e}_0$.
Therefore, there exists a positive integer $e_0$ such that for all $e \geq e_0$ divisible by $g$, the sheaf $\mathcal{F}_0^e$ has constant rank $r$, which means that
\begin{itemize}
\item[(c$'$)]\emph{the trace map
$\mathrm{Tr}^{e}_D\colon \mathcal{F}^{e} \to \mathcal{F} :=\mathcal{F}^{e_{0}}_0 $ is generically surjective.}
\end{itemize}

We shall apply \cite[Theorem~3.7]{Zh20}, and we need to verify the three conditions required there. We refer to  \cite[Section~3]{Zh20} for the related notions of Fourier-Mukai transform.

With the assumption that $D-(K_X+\Delta)$ is nef and relatively ample over $A$, the proof of the vanishing condition (C2) in \cite[Section~4.2]{Zh20}, if substituting the pair $(X,B)$ with $(X,\Delta)$ and the divisor $l(K_X+B)$ with $D$,  still works and yields the following:
\begin{quote}\it
  {\sc Claim}. If $H$ is an ample line bundle on $\widehat{A}$, then for any $i>0$ and sufficiently divisible integer $e>0$,
  $$H^i(A, \mathcal{F}^{e} \otimes \widehat{H}^*) = 0.$$
\end{quote}
By induction, we can find integers $e_0< e_1 < e_2 < \cdots <e_{d}$ divisible by $g$, and ample line bundles $H_0, H_1, \ldots, H_{d-1}$ on $\widehat{A}$ such that, if setting
$$  \mathcal{F}_0= \mathcal{F},\,
    \mathcal{F}_1=\mathcal{F}^{e_{1}},\,
    \mathcal{F}_2=\mathcal{F}^{e_{2}},\ldots,\, \mathcal{F}_d=\mathcal{F}^{e_{d}},$$
    we have
\begin{itemize}
\item[(a$'$)]
  \emph{for $0\leq l \leq d-1$ and every $i$, the sheaf $R^i\Phi_{\mathcal{P}}D_A(\mathcal{F}_{l}) \otimes H_l$ is globally generated, and if $j>0$ then $H^j(\widehat{A}, R^i\Phi_{\mathcal{P}}D_A(\mathcal{F}_{l}) \otimes H_l) = 0$; and}
\item[(b$'$)]
\emph{for $0\leq l < l' \leq d$, if $j>0$ then $H^j(A, \mathcal{F}_{l'}\otimes \widehat{H}_l^*) = 0$.}
\end{itemize}
We see that the conditions  (a,\,b,\,c) of \cite[Theorem~3.7]{Zh20} are guaranteed by the above conditions (a$'$,\,b$'$,\,c$'$) respectively. Therefore, we can apply \cite[Theorem~3.7]{Zh20}: 
\begin{itemize}
\item 
by \cite[Theorem~3.7 (i)]{Zh20} the homomorphism $\alpha_{\mathcal{F}}\colon  \mathcal{F}^{*} \to (-1)_A^*R^{0}\Psi_{\mathcal{P}}R^{0}\Phi_{\mathcal{P}}D_A(\mathcal{F})$ is injective, and in turn applying \cite[Proposition~3.4~(2)]{Zh20} we can show that
$$V^0(f_*\mathcal{O}_X(D)) =\mathrm{Supp}(-1)_{\widehat{A}}^*R^{0}\Phi_{\mathcal{P}}D_A(\mathcal{F}) \neq \emptyset;$$
\item
by \cite[Theorem~3.7 (ii)]{Zh20}, the second assertion of the theorem follows.
\end{itemize}
\end{proof}

Next, we prove the following semi-ample criterion.
\begin{thm}\label{thm:hor-map-1}
Let $X$ be a normal $\mathbb{Q}$-factorial projective variety equipped with a fibration $f\colon X \to A$ of relative dimension one onto an abelian variety. Assume that
\begin{itemize}
  \item[\rm(a)] $K_X \simQ 0$,
  \item[\rm(b)] there exists an irreducible $f$-horizontal divisor $B$ on $X$ such that $B|_{B^\nu} \equiv 0$.
\end{itemize}
Then $B$ is semi-ample, and the associated morphism $g\colon X\to C$ is a fibration to a curve.
\end{thm}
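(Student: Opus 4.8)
The plan is to show first that $B$ is nef, then that $\nu(B)=1$, and finally to boost $B$ to semi-ampleness using the nonvanishing engine of Theorem~\ref{thm:non-van} together with the fact that numerical dimension one makes the Iitaka fibration well-behaved. For nefness, the key observation is that $B$ is $f$-relatively numerically trivial: since $f$ has relative dimension one and $B$ is $f$-horizontal, the restriction of $B$ to a general fiber $X_s\cong F$ is an effective divisor of some positive degree, but by hypothesis~(b) the self-intersection data is controlled along $B$ itself. More precisely, I would argue that any irreducible curve $\Gamma\subset X$ with $B\cdot\Gamma<0$ must lie inside $\Supp B$ (otherwise $B|_\Gamma\ge 0$), and then lie inside a single component $B_0$ of $B$; restricting to $B_0^\nu$ and using $B|_{B^\nu}\equiv 0$ gives $B\cdot\Gamma = (B|_{B_0^\nu})\cdot\Gamma'=0$, a contradiction. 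Hence $B$ is nef. For the numerical dimension: since $f$ has relative dimension one and $B$ is horizontal, $B$ restricted to a general fiber is ample on that curve, so $B\cdot F'>0$ for a fiber class $F'$ of a curve in $F$ — more carefully, $B^{\dim X - 1}\cdot(\text{fiber of }f)>0$ forces $\nu(B)\ge \dim X-1$? No: the cleaner route is that $B$ is $f$-relatively ample (it is horizontal and its restriction to each fiber, a curve, is an effective nonzero divisor, hence ample), while $B|_{B^\nu}\equiv 0$ shows $B^2$ is ``supported on $B$'' and iterating gives $B^{k}\equiv 0$ for $k\ge 2$; thus $\nu(B)=1$.

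Next I would produce a section of a multiple of $B$. Apply Theorem~\ref{thm:non-van} with $A$ the abelian variety, $\Delta=0$, and $D=B$: hypothesis~(a) gives $K_X\simQ 0$ so the Weil index conditions (a),(b) of that theorem hold trivially (index $1$); condition (c) $D-(K_X+\Delta)\simQ B$ is nef (just proved) and relatively ample over $A$ (since $B$ is $f$-horizontal and restricts to an ample divisor on the one-dimensional fibers); condition (d) requires $S^0(X_\eta, B|_{X_\eta})\ne 0$, which holds because $X_\eta$ is a curve of arithmetic genus one with $K_{X_\eta}\simQ 0$ (restricting $K_X\simQ 0$) and $B|_{X_\eta}$ is an effective divisor of positive degree, so $h^0$ is positive and $F$-regularity/surjectivity of the relevant trace maps on a genus-one curve gives $S^0\ne 0$. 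The conclusion gives $V^0(f_*\O_X(mB))\ne\emptyset$ (after passing to a suitable multiple $mB$ to clear $\QQ$-Cartier issues, though here everything is already Cartier), i.e.\ there is $\alpha\in\Pic^0(A)$ with $h^0(X,\O_X(mB)\otimes f^*\mathcal P_\alpha)>0$; since $f^*\mathcal P_\alpha$ is numerically trivial, this says $mB+f^*P$ is effective for some $P\in\Pic^0(A)$. Because $B$ is nef with $\nu(B)=1$, and $B\equiv (mB+f^*P)/m$ up to the numerically trivial twist, standard theory (the class $B$ lies on the boundary of the nef cone in a $1$-dimensional face) lets me conclude $\kappa(X,B)\ge 1$: the Iitaka fibration of $B$ exists and, since $\nu(B)=1$, it is a fibration to a curve $C$ with $B\simQ g^*(\text{ample on }C)$ plus something numerically trivial; chasing through $\Pic^0$ one arranges $B$ itself (not a twist) to be semi-ample. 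Then $g\colon X\to C$ is the associated fibration, and $g$ is to a curve because $\nu(B)=1$.

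The main obstacle I expect is the last boosting step: going from ``$mB+f^*P$ is effective for some $P\in\Pic^0(A)$'' to ``$B$ (untwisted) is semi-ample.'' The nonvanishing from Theorem~\ref{thm:non-van} only gives effectivity up to a numerically trivial twist pulled back from $A$, and one must absorb that twist. Here I would use that $B$ is $f$-horizontal of relative dimension one so the Iitaka fibration $g$ associated to $B$ (once we know $\kappa(X,B)\ge 0$ and $\nu(B)=1$, hence $\kappa(X,B)=1$ by abundance-type arguments valid because the only potentially problematic direction is along fibers of $f$, which are curves) is transversal to $f$; then $g$ restricted to a general fiber $F$ of $f$ is finite, and pulling the ample generator on $C$ back via $g$ recovers $B$ up to a $g$-vertical, $f$-horizontal divisor which, being numerically trivial on the general fiber of $f$ and vertical over $C$, must be numerically trivial and indeed trivial modulo the action of $\Pic^0$. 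Making this rigorous is where the dimension-one hypothesis on $f$ and the condition $B|_{B^\nu}\equiv 0$ are essential, and it parallels the argument of \cite{Zh20}*{Section~4.2} referenced just before the theorem; if a direct argument proves elusive I would instead invoke that reference's machinery verbatim to conclude semi-ampleness of $B$ after checking its hypotheses are met in our (simpler, non-MMP) setting.
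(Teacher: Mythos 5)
Your skeleton (nef, then $\nu(B)=1$, then nonvanishing via Theorem~\ref{thm:non-van}, then boost to a pencil) matches the paper's, but two steps have genuine gaps.

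First, condition (c) of Theorem~\ref{thm:non-van} requires $D-(K_X+\Delta)$ to be \emph{relatively ample} over $A$, and your justification (``$B$ restricts to an ample divisor on the one-dimensional fibers'') only works on general fibers. A special fiber of $f$ can be reducible, with components disjoint from $B$; on such a component $B$ has degree zero, so $B$ is not $f$-ample on $X$ and the theorem cannot be applied there directly. The paper's fix is to first apply adjunction and Lemma~\ref{lem:adj-charA} to $(K_X+B)|_{B^\nu}\equiv0$ to conclude that $B$ is an abelian variety, hence finite over $A$; this forces the relative exceptional locus $\mathbb E(B)$ to be disjoint from $B$, so Keel's theorem makes $B$ relatively semi-ample over $A$, and Theorem~\ref{thm:non-van} is applied on the resulting contracted model $Y$ where $B_Y$ genuinely is $f$-ample. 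You never use the fact that $B$ is an abelian variety, and this omission also undermines your last step (see below).

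Second, the final boost from ``$mB+f^*P$ is effective for some $P\in\Pic^0(A)$'' to semi-ampleness is exactly the hard point, and your proposal does not close it: asserting $\kappa(X,B)=1$ from $\nu(B)=1$ ``by abundance-type arguments'' is circular (that implication is what is being proved), and invoking \cite{Zh20} verbatim is not available since that machinery runs the threefold MMP. Note also that the theorem does not require removing the twist: it suffices to produce a \emph{pencil} in some $|lB+L|$ with $L$ numerically trivial, since $B$ is then supported on a fiber of the induced map to a curve and is therefore itself semi-ample. The paper gets this pencil as follows: when $\dim V^0=0$, Theorem~\ref{thm:non-van}(2) together with the covering theorem produces several effective divisors $D_i\sim mB+f^*L_i$ with $L_i\in\Pic^0(A)$; writing $D_i=aB+D'$ and using $\nu(B)=1$ plus $B|_{B}\geQ0$ one shows $D_i|_{B}\simQ0$, hence $f^*(L_1-L_2)|_{B}\simQ0$, and since $B\to A$ is dominant this forces $L_1\simQ L_2$; multiplying by a suitable $N$ gives $ND_1\sim ND_2$, i.e.\ two distinct sections of the single line bundle $NmB+NL_1$. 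This restriction-to-the-abelian-variety-$B$ trick is the missing idea in your write-up.
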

\begin{proof}
Since $B|_{B^\nu} \equiv 0$, $B$ is a nef divisor with numerical dimension $\nu(B) =1$.
To show the semi-ampleness of $B$, it suffices to find an integer $l>0$ and a numerically trivial line bundle $L$ such that $h^0(X, lB +L) >1$.
Indeed, granted this, by Lemma~\ref{lem:nu=1} the divisor $lB +L$ is semi-ample and induces a fibration $g\colon X\to C$ to a curve, then since $lB\equiv lB+L$ and $B$ is irreducible by assumption, we see that a multiple of $B$ coincides with a fiber of $g$, and hence $B$ is semi-ample.

\medskip
Since $(K_X+B)|_{B^\nu} \equiv 0$, by Lemma~\ref{lem:adj-charA}, $B$ is isomorphic to an abelian variety and thus $B\to A$ is a finite morphism.
Note that since $X$ has relative dimension one over $A$, the divisor $B$ is relatively big over $A$.
Let $\mathbb E(B)$ denote the relatively exceptional locus with respect to $B$, namely, the union of $f$-exceptional irreducible varieties on which the restriction of $B$ is not $f$-big.

\begin{quote}\it
  {\sc Claim}: The intersection $\mathbb E(B) \cap B = \emptyset$, and $B$ is relatively semi-ample over $A$. 
\end{quote}
{\it Proof of the claim.}
Since $B$ is finite over $A$, if there is an irreducible component $Z$ of $\mathbb E(B)$ intersecting $B$, then $Z\cap B$ is also finite over $f(Z)$, hence $B|_Z$ is big over $f(Z)$, a contradiction.
Therefore, we conclude that $\mathbb E(B)$ does not intersect $B$, and hence $B$ is $f$-semi-ample by \cite[Theorem~0.2]{Keel99}.
\qed\medskip

The $f$-relative semi-ample divisor $B$ induces a birational contraction $\sigma\colon X\to Y$, which is isomorphic both near $B$ and on the generic fiber $X_{\eta}$ of $f$, and there exists a $\mathbb{Q}$-Cartier divisor $B_Y$ on $Y$ such that $B=\sigma^*B_Y$.
\medskip

Since $B|_{X_{\eta}}$ is an ample divisor on $X_{\eta}$, we may take a sufficiently divisible integer $l>0$ such that $lB$ is Cartier and $r:=\dim_{K(\eta)}S^0(X_{\eta}, lB) > 1$. Set $\mathcal L = \mathcal{O}_X(lB)$.
By construction, we have  $B_Y$ is  relatively ample over $A$, and $K_Y \simQ 0$. If setting $D =lB_Y$ then $D-K_Y$ is relatively ample over $A$. Therefore, Theorem~\ref{thm:non-van} applies to $Y$ and $D =lB_Y$ and yields that $V^0(f_*\mathcal{L}) \neq \emptyset$.
\smallskip

If $\dim V^0(f_*\mathcal{L}) >0$, then we can apply the argument Step~1 of the proof of \cite[Theorem~4.2]{Zh20} to show that $\kappa(X, B+ f^*L)\geq 1$ for some $L \in \mathrm{Pic}^0(A)$, which is sufficient to conclude the proof.
\smallskip

Now, assume $\dim V^0(f_*\mathcal{L}) =0$. Then by Theorem~\ref{thm:non-van}~(2)
there exist a subsheaf $\mathcal{F} \subseteq f_*\mathcal{L}$ of rank $r$, an isogeny $\pi\colon A_1\to A$ of abelian varieties, some $P_1, \ldots, P_r \in \mathrm{Pic}^0(A_1)$ and a generically surjective homomorphism
$$\beta\colon  \bigoplus_iP_i \to \pi^*\mathcal{F}.$$
Applying the covering theorem as in Step 2 of the proof of \cite[Theorem~4.2]{Zh20}, we show that there exist an integer $m>0$ and some $L_1, \ldots, L_r \in \Pic^0(A)$ such that
\begin{itemize}
  \item $H^0(X, mB + f^*L_i) \neq 0$;
  \item the sub-linear system of $|(mB)_{K(\eta)}|$ corresponding to the subspace $\sum_iH^0(X, mB + L_i)\otimes_k K(\eta) \subseteq H^0(X_{\eta}, mB|_{X_{\eta}})$ defines a non-trivial map.
\end{itemize}
We may assume each $h^0(X, mB + f^*L_i)=1$, thus there exists a unique effective divisor $D_i \sim mB + f^*L_i$.

% If $B$ is an irreducible component of $D_i$ and $a$ the coefficient of $B$ in $D_i$. Write that $D_i = aB + D'$. By $D_i|_{B}\equiv 0$, applying the adjunction formula we have
% \begin{align*}
%   0\equiv \frac{D_i}{a}|_{B} \simQ B|_{B} +  \frac{D'}{a}|_{B} \simQ (K_X+\frac{D_i}{a})|_{B}
%                              \simQ (K_X+B)|_{B} +  \frac{D'}{a}|_{B}\simQ K_{B} + \Delta_{B} +  \frac{D'}{a}|_{B} \geQ 0.
% \end{align*}
% In turn we conclude that $D_i|_{B} \simQ 0$.

% If $B$ is not a component of $D_i$, by $D_i|_{B}\equiv 0$ we conclude that $\mathrm{Supp}D_j \cap B = \emptyset$, as a result  $D_i|_{B} \simQ 0$.
  Write $D_i = aB + D'$ such that $B\not\subseteq\Supp D'$. We have $D_i|_{B}\equiv 0$. By $\nu(B)=1$, we conclude that $\Supp B\cap \Supp D' = \emptyset$, thus $D'|_{B} \sim 0$.
  Moreover, by Lemma~\ref{lem:adj-charA}, $B|_{B} \simQ (K_X + B)|_{B} \geQ 0$. Then it follows that $D_i|_{B} \simQ 0$.

%Take $D_1 \neq D_2$. By $D_1 - D_2 \sim f^*(L_1-L_2)$, from the above we conclude that $f^*(L_1-L_2) |_{B} \simQ 0$.
  Take $D_1 \neq D_2$. By $D_1 - D_2 \sim f^*(L_1-L_2)$, we conclude that $f^*(L_1-L_2) |_{B} \simQ 0$.
Since $B \to A$ is dominant, we have $L_1 \simQ L_2$, that is, there exists some $N>0$ such that $NL_1\sim NL_2$. But then $ND_1 \sim ND_2$. This shows $h^0(X, NmB+ NL_1) \geq 2$, which concludes the proof.
\end{proof}

\section{Structure theorems of $K$-trivial irregular varieties with $\dim a_X(X) = 1$}\label{sec:str-thms-albdim=1}
In this section, we treat $K$-trivial irregular varieties $X$ with $\dim a_X(X) = 1$. The main result is the following.
\begin{thm}\label{thm:q1}
  Let $X$ be a normal $\QQ$-factorial projective variety with $K_X\equiv 0$ and $\dim a_X(X)=1$, where $a_X\colon X\to A$ is the Albanese morphism of $X$.
  Assume moreover that, either 
  \begin{itemize}
    \item[\rm(a)] $X$ is strongly $F$-regular and the Cartier index of $K_X$ is indivisible by $p$, or 
    \item[\rm(b)] $\dim X = 3$, $p\ge5$, and $X$ has at worst terminal singularities.
  \end{itemize}
  Then 
  \begin{itemize}
    \item[\rm(1)] $q(X) = 1$, thus $E:=A$ is an elliptic curve and $f=a_X\colon X\to E$ is a fibration.
%   \item[\rm(2)] for all $t\in E$, the fiber $X_t$ of $f$ is isomorphic to a fixed fiber $F=X_{t_0}$; and
    \item[\rm(2)] there exists an isogeny $\pi\colon \bar E \to E$ of elliptic curves such that $X\times_E \bar E \cong \bar E\times F$, where $F$ is a fiber of $f$. 
      More precisely, there is a faithful action of $H= \mathrm{ker} (\pi)$ on $F$, such that \[
        X\cong \bar E\times F/H,
      \]
      where $H$ acts diagonally on $\bar E\times F$.
  \end{itemize}
\end{thm}

\begin{proof}
   Note that under the assumption (b), the generic fiber of $f$ is a regular surface. So in both cases (a) and (b), we can apply Theorem~\ref{thm:pos-EP} (i) to show the assertion (1). 
\smallskip

Next we claim that there exists a divisor $D$ such that
  \begin{itemize}
    \item[$(*)$] for sufficiently divisible $m>0$, $f_*\mathcal{O}_X(mD)$ is a numerically flat vector bundle on $E$, and there exists an isogeny $\tau_1\colon E_1 \to E$ such that $\tau_1^*f_*\mathcal{O}_X(mD) \cong \bigoplus_{i=1}^{r}L_i$ for some $L_i \in \mathrm{Pic}^0(E_1)$; and
    \item[$(**)$] $D$ is semi-ample and $f$-ample, and $\nu(D) = \dim X-1$.
  \end{itemize}
Such a divisor $D$ satisfying the condition $(*)$ exists by Theorem~\ref{thm:pos-EP}, and the condition $(**)$ is guaranteed by Theorem~\ref{thm:pos-EP} (iv) in case (a) and by Theorem~\ref{thm:3fold-rdim2-semiample} in case (b).

Now let $g\colon X\to Z$ be the fibration associated with $D$, where $Z$ is a normal projective variety with $\dim Z=\dim X-1$. To summarize, we obtain the following ``bi-fibration'' structure:
$$\xymatrix@R=4ex@C=4ex{&X\ar[d]_{f}\ar[r]^{g} & Z.\\
&E &
}$$
Here, for a general closed point $z\in Z$, the fiber $X_z$ of $g$ is a curve which is finite and dominant over $E$. Since  $K_{X_z}\equiv 0$, $X_z$ is in fact an elliptic curve.
Moreover, since $D$ is $f$-ample, we conclude that $g\colon X\to Z$ is equidimensional and every component of a fiber $X_z$ of $g$ over an arbitrary closed point $z\in Z$ is dominant over $E$.

In the following, we fix a sufficiently large $m$ such that for any $l>0$, the natural homomorphism $\eta\colon S^l(f_*\mathcal{O}_X(mD)) \to f_*\mathcal{O}_X(lmD)$ is surjective, where $S^l$ denotes  the $l$-th symmetric power.

\smallskip
Following the approach of \cite[Proposition 7.6]{EP23}, we can prove the following result.
\medskip

{\sc Lemma.}
  There exists an isogeny $\tau \colon \tilde{E} \to E$ of elliptic curves such that $\tilde{X}:=X\times_E\tilde{E} \cong \tilde{E} \times  F$.
\medskip

{\it Proof of the lemma.}
First we prove the following claim:
\begin{quote}\it
  {\sc Claim}. There exists an isogeny $\tau \colon \tilde{E} \to E$ of elliptic curves such that $\tau^*f_*\mathcal{O}_X(mD) \cong \bigoplus^r\mathcal{O}_{\tilde{E}}$, where $r:=\mathop{\rm rank}f_*\mathcal O_X(mD)$.
\end{quote}

{\it Proof of the claim.}
  By $(*)$ there exists an isogeny $\tau_1\colon E_1 \to E$, such that $\tau_1^*f_*\mathcal{O}_X(mD) \cong \bigoplus_{i=1}^{r}L_i$ for some $L_i \in \mathrm{Pic}^0(E_1)$.
  Let $X_1= (X\times_EE_1)^\nu$. Denote by $\pi_1\colon X_1 \to X$ and $f_1\colon  X_1\to E_1$ the natural projections. Then $|\pi_1^*mD - f_1^*L_i| \neq \emptyset$. The divisor $\pi_1^*D$, being semi-ample with $\nu(\pi_1^*D) =n-1$, induces a fibration  $g_1\colon X_1 \to Z_1$. Then for a general fiber $C_1$ of $g_1$, the linear system $|\pi_1^*mD - f_1^*L_i|_{C_1}   \neq \emptyset$. We conclude that $f_1^*L_i|_{C_1} \sim 0$, thus each $L_i$ is a torsion point in $\mathrm{Pic}^0(E_1)$. There exists a further isogeny of elliptic curves $\tau_2\colon  E_2 \to E_1$ such that $\tau_2^*L_i \cong \mathcal{O}_{E_2}$. We may take $\tau\colon  \tilde{E} = E_2 \to E_1 \to E$  to complete the proof of the claim.
\qed\smallskip

Denote by $\tilde{\pi}\colon \tilde{X}:=X\times_E\tilde{E}\to X$ and $\tilde{f}\colon\tilde{X}\to \tilde{E}$ the natural projections. Since for each $l\geq 1$, $\eta\colon S^l(f_*\mathcal{O}_X(mD)) \to f_*\mathcal{O}_X(lmD)$ is surjective, the homomorphism
$$\varphi_l: S^l(\tilde{f}_*\mathcal{O}_{\tilde{X}}(m\tilde{\pi}^*D)) \cong \tau^*S^l(f_*\mathcal{O}_X(mD)) \cong  \bigoplus \mathcal{O}_{\tilde{E}}  \to  \tilde{f}_*\mathcal{O}_{\tilde{X}}(lm\tilde{\pi}^*D) \cong  \tau^*f_*\mathcal{O}_X(lmD)$$
is surjective. Combining this with the numerically flatness of $f_*\mathcal{O}_X(lmD)$, $l\ge1$, we conclude that $\tilde{f}_*\mathcal{O}_{\tilde{X}}(lm\tilde{\pi}^*D) \cong  \bigoplus \mathcal{O}_{\tilde{E}}$, and $\varphi_l$ is determined by the corresponding homomorphism of the global sections. From this, we conclude that $\tilde{X}=X\times_E \tilde{E}\cong \tilde{E}\times F$.
\qed\smallskip 

We regard the morphism $\tau\colon \tilde{E} \to E$ as a morphism of abelian varieties and write $\tilde{H} = \mathrm{ker}(\tau)$. The natural action of $\tilde{H}$ on $\tilde{E}$ induces an action of $\tilde{H}$ on the base change $\tilde{X}=X\times_E \tilde{E}$. To summarize, we have a commutative diagram:
\begin{equation}\label{eq:VHBR}
  \vcenter{\xymatrix@R=2ex@C=3.5ex{
   &&& F \ar[rr] && Z \\
   \tilde{E}\times F  &&\tilde{X} \ar[ll]^<<<<<<\cong_<<<<<<{\varphi=(\tilde f, \tilde g)} \ar[ru]^{\tilde g} \ar[rr]^<<<<<<<{\tilde{\pi}}\ar[dd]^{\tilde{f}} && X\ar[dd]^f\ar[ru]^<<g &\\
   \\
   &&\tilde{E}\ar[rr]^{\tau} &&E\rlap{\,.}  \\
}}\end{equation}

\medskip
Through the isomorphism $\varphi\colon\tilde{X}\buildrel\sim\over\to \tilde{E}\times F$, $\tilde{H}$ acts on $\tilde{E}\times F$.
Our next step is to show that this action of $\tilde H$ on $\tilde E\times F$ is diagonal.
For this purpose, we consider the second fibration $g\colon X\to Z$.
% The generic fiber $X_{\xi}$ of $g\colon X\to Z$ is a regular curve with arithmetic genus one.
% In fact it is smooth over $K(Z)$ since it is dominant over an elliptic curve $E$.
As observed before, for a general closed point $z\in Z$, the fiber $X_z$ of $g$ is an elliptic curve, so $X_z\to E$ is an isogenous of a fixed degree. 
Let $Z^\circ \subset Z$ be an open subset such that $Z^\circ$ is regular and for each closed point $z\in Z^\circ$, the fiber $X_z$ of $g$ is an elliptic curve.
Set $X^\circ := g^{-1}(Z^\circ)$ and $F^\circ := F\cap X^\circ$.
By Proposition~\ref{isotrivial-prod}, we conclude that:
\begin{itemize}
  \item[$(*{*}*)$] There exists an isogeny $\tau'\colon E'\to E$ from an elliptic curve $E'$ such that $X^\circ \times_E E' \cong E' \times F^\circ$. Moreover the induced action of $G:=\ker\sigma$ on $X^\circ \times_E E'$ is diagonal on $E' \times F^\circ$, and the following commutative diagram commutes
$$\xymatrix@R=2.5ex@C=3ex{
   &&&  F^\circ \ar[rr]&& Z^{\circ}\cong  F^\circ/G \\
  E'\times F^\circ\ar@/^1pc/[rrru]^{p_2}  && X^{\circ}_1:=X^\circ\times_E E'\ar[ll]_<<<<<\cong \ar[ru]^{g_1} \ar[rr]^>>>>>>>>{\pi_1}\ar[dd]^{f_1} && X^{\circ}\ar[dd]^f\ar[ru]^<<<g \\
   \\
   &&E'\ar[rr]^{\tau'} &&E \rlap{\,.} \\
}$$
\end{itemize}

Remark that if we take a further isogeny $\tau''\colon E'' \to E'$ of elliptic curves, then the action of $\ker\tau''$ on the base change $X^{\circ}_1\times_{E'}E''\cong E''\times F^{\circ}$ induces an action on the product $E''\times F^{\circ}$; precisely, $\ker\tau''$ acts on $E''$ by translation and on $F^{\circ}$ trivially. Thus the base change $\tau'\circ \tau''\colon E'' \to E' \to E$ induced a diagonal action of $\ker\tau'\circ \tau''$ on $E''\times F^{\circ}$.
Therefore, we may choose an isogeny $\tilde{\tau}'\colon \tilde{E}'\to E$ which factors through both $\tau\colon \tilde E\to E$ and $\tau'\colon E'\to E$, and obtain the
following commutative diagram 
\[
  \xymatrix@M=4pt@C6pt@R7pt{
    & F^\circ \ar[rr]\ar@{_{(}-->}[dd]|>>>>>>\hole && F^\circ/G \ar[dd] \\
    X^\circ \times_E \tilde E' \ar@{_{(}->}[dd]\ar[rr]\ar[ur]^{\tilde g_0'} && X^\circ \ar@{_{(}->}[dd]\ar[ur]^{g_0} \\
    & F \ar@{-->}[rr]|<<<<<<<\hole && Z \\
    X\times_E \tilde E' \ar[rr]\ar[dd]\ar@{-->}[ur]^{\tilde g'} && X \ar[dd]\ar[ur]^g \\
    \\
    \tilde E' \ar[rr]^{\tilde{\tau}'} && E \rlap{\,.}
  }
\]
Thus the natural action of $\tilde H'=\ker \tilde\tau'$ on the base change $X\times_E \tilde E' \cong \tilde E' \times F$ is compatible with its action on $X^\circ \times_E E_1$ given by $(*{*}*)$, which is diagonal.
From this we conclude that $\tilde H'$ acts on $\tilde E' \times F$ diagonally.
Finally, let $K$ be the kernel of the action of $\tilde H'$ on $F$. Set $H := \tilde H'/K$ and $\bar E := \tilde E'/K$.
Then $H$ acts on $F$ faithfully and the proof is complete.
\end{proof}

\section{Structure of $K$-trivial irregular varieties with $\dim a_X(X) = \dim X-1$}\label{sec:str-thms-reldim=1}
In this section, we treat $K$-trivial irregular varieties whose Albanese morphism has relative dimension one. We work in the following setting.
\begin{assumption}\label{Ass-C}\rm
Let $X$ be a normal projective $\mathbb{Q}$-factorial variety with $K_X \equiv 0$. Let $a_X\colon X\to A$ be the Albanese morphism of $X$ and assume that $a_X\colon X\to a_X(X)$ is of relative dimension one.
Then by Theorem~\ref{thm:cb-formula} and Corollary~\ref{cor:fibration}, we see that $K_X \simQ 0$ and $a_X\colon X\to A$ is a fibration. In the following we set  $f=a_X\colon X\to A$. We fall into one of the following three cases:
\begin{itemize}
  \item[(C1)] $X_{\eta}$ is smooth over $k(\eta)$, that is, $f$ is an elliptic fibration;
  \item[(C2)] $X_{\eta}$ is non-smooth but geometrically integral, that is, $f$ is a quasi-elliptic fibration;
  \item[(C3)] $X_{\eta}$ is geometrically non-reduced, that is, $f$ is inseparable.
\end{itemize}
\end{assumption}
Note that the latter two cases occur only when the characteristic $p = 2$ or $3$ (see Section \ref{sec:curve-pa1}).

\subsection{Case (C1): $X_{\eta}$ is smooth.}\label{sec:elliptic-3fold}
% In this case, we give a thorough description of $X$.
% First we give a rough description of the structure of $X$ as the following theorem.
In this case, we give a thorough description of $X$.

\subsubsection{A rough description} 
We first give a rough description of the structure of $X$. 
\begin{thm}\label{thm:elliptic-3fold}
  Under Assumption~\ref{Ass-C} (C1), there exist an abelian variety $A'$ and an isogeny $\tau\colon A'\to A$ such that $X \times_A A' \cong A'\times F$, where $F$ is a general fiber.
  More precisely, there is a faithful action of $G:=\ker(\tau)$ on $F$, such that $X \cong A'\times F/G$, where $G$ acts on $ A'\times F$ diagonally.
\end{thm}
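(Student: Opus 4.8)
\emph{The plan is to} show that $f$ is a smooth isotrivial elliptic fibration, trivialize it over a finite \'etale cover of $A$ by means of the Isom functor of Proposition~\ref{isotrivial-torsor}, arrange that this cover is an isogeny and that the descended deck action is diagonal by choosing a degree-one polarization on the fibre, and finally pass to a quotient to make the action faithful. First, under Assumption~\ref{Ass-C} we already have $K_X\simQ0$ and that $f=a_X\colon X\to A$ is a fibration; since $K_A\sim0$ this gives $K_{X/A}\simQ0$, hence $\kappa(X,K_{X/A})=0$. As $X_\eta$ is a smooth curve of genus one, $f$ is an elliptic fibration, and by \cite{CZ15}*{Theorem~2.14} (combined with Theorem~\ref{rel-can-ellfib-sm}, which forces $\Var(f)=0$) the fibration $f$ is isotrivial; in particular it is smooth and flat, and every closed fibre is isomorphic to a fixed elliptic curve $F$. (If one only extracts $\Var(f)=0$ from the cited results, one still rules out singular or multiple fibres directly: such a fibre would contain an $a_X$-contracted prime divisor along which $K_{X/A}$ is non-trivial, contradicting $K_{X/A}\simQ0$.)

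Next, fix a $k$-rational point $e\in F$ and set $\mathcal L_0:=\mathcal O_F(e)$, a polarization of degree one. A direct computation with divisor classes shows that $\Aut(F,\mathcal L_0)$ has no translation, and a fortiori no infinitesimal, part: it coincides with the finite, \'etale, indeed constant (as $k=\bar k$) automorphism group scheme $G_0:=\Aut(F,0)$. Choosing an $f$-ample line bundle on $X$ whose restriction to a general fibre is $\mathcal L_0$ and applying Proposition~\ref{isotrivial-torsor}, we obtain a $G_0$-torsor $q\colon I\to A$ together with an isomorphism $X\times_AI\cong I\times F$ over $I$. Since $G_0$ is constant \'etale, $q$ is finite \'etale; let $A':=I_0$ be a connected component of $I$, which surjects onto $A$ because $A$ is connected. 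As a connected finite \'etale cover of an abelian variety, $A'$ is, after translating the origin, itself an abelian variety, and $\pi:=q|_{A'}\colon A'\to A$ is an isogeny; put $H:=\ker\pi$, a finite \'etale commutative group scheme. Restricting the trivialization to $I_0$ yields an isomorphism $X\times_AA'\cong A'\times F$ over $A'$.

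It remains to descend to $X=(X\times_AA')/H$ and to check diagonality and faithfulness. Writing the trivialization as $(x,a')\mapsto(a',\phi_{a'}(x))$ with $\phi_{a'}\colon X_{\pi(a')}\buildrel\sim\over\to F$, the action of $h\in H$ (translation by $h$ on the $A'$-factor of $X\times_AA'$) becomes $(a',y)\mapsto(a'+h,\theta_h(a')(y))$ with $\theta_h(a')=\phi_{a'+h}\circ\phi_{a'}^{-1}\in\Aut(F)$. But $\phi_{a'}$ and $\phi_{a'+h}$ are two points of the fibre of the $G_0$-torsor $I$ over $\pi(a')$, so $\theta_h(a')\in G_0=\Aut(F,0)$; as $A'$ is connected and $G_0$ is discrete, $\theta_h(a')$ is a constant $d_h\in G_0$. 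Hence $h$ acts on $A'\times F$ by $(a',y)\mapsto(a'+h,d_h(y))$, the product of a translation on $A'$ and an automorphism of $F$, and (using that $H$ is abelian) $h\mapsto d_h$ defines a homomorphism $H\to\Aut(F)$. Finally set $K:=\ker(H\to\Aut(F))$; since $K\subseteq A'$ acts freely by translations and trivially on $F$, we get $(A'\times F)/K\cong(A'/K)\times F$, and replacing $A'$ by $A'/K$ and $H$ by $G:=H/K=\ker(A'/K\to A)$ makes the resulting diagonal action of $G$ on $F$ faithful. This gives $X\cong A'\times F/G$ with $G$ acting diagonally and faithfully, as claimed; this last step mirrors the concluding argument in the proof of Theorem~\ref{thm:q1}.

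\emph{The main obstacle} I expect is the first step: extracting from \cite{CZ15}*{Theorem~2.14} (or re-proving in the present normal $\QQ$-factorial $K$-trivial setting) that $f$ is genuinely smooth with all fibres isomorphic to $F$, i.e.\ that no singular or multiple fibres occur — this is where positive characteristic could a priori intervene. A second, smaller point is the observation that a degree-one polarization eliminates any translation or infinitesimal part of the structure group $G_0$: this is precisely what makes $\theta_h$ land in the discrete group $\Aut(F,0)$ and hence the descended action diagonal. Without it one would be left with a potentially non-zero obstruction class in $H^1(H,\Hom(A',F))$ to diagonalizing the action.
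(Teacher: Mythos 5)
Your overall architecture (isotriviality, trivialization via the Isom functor, descent along an isogeny, diagonalization, then killing the kernel to get faithfulness) matches the paper's, and your final faithfulness step and the identification $X\cong(A'\times F)/G$ are essentially the paper's Step~4. However, there are two genuine gaps in the middle.

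First, you assert that $f$ is smooth with \emph{all} closed fibres isomorphic to $F$. Isotriviality (i.e.\ $\Var(f)=0$) does not give this, and your fallback argument fails: a singular or non-reduced fibre sitting over a codimension-$\ge2$ locus of $A$ sweeps out a subset of codimension $\ge2$ in $X$, not a prime divisor, so it produces no $a_X$-contracted divisor and no contradiction with $K_{X/A}\simQ0$. The paper's Step~1 only proves smoothness and isotriviality of the fibres over a \emph{big} open subset $A^\circ\subseteq A$ (by cutting with general hyperplanes and an adjunction computation), and it then needs two further nontrivial inputs that your argument omits: an extension of the torsor from $A^\circ$ to $A$ (Ekedahl's purity theorem \cite{Eke88} plus Nori's theorem \cite{Nor83} realizing the torsor through some $n_A\colon A\to A$), and a separate argument (equidimensionality of $f$ from \cite{PZ19}, normality of the base change, and a relative-ampleness/Zariski's-main-theorem argument) showing that the resulting birational map $X\times_{A,n_A}A\dashrightarrow A\times F$ is an honest isomorphism. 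Without these, your trivialization is only established over an open subset whose complement you cannot control.

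Second, your choice of the degree-one polarization $\mathcal L_0=\mathcal O_F(e)$ is not available in general: you need an $f$-ample line bundle on $X$ of relative degree one over $A$, i.e.\ the generic fibre $X_\eta$ must have index one over $K(A)$, which fails already for bielliptic-type examples (e.g.\ case b2) of Section~\ref{sec:explicite-of-C1}, where no $G$-invariant relative degree-one divisor class on $A\times F$ exists). Consequently the structure group $\Aut(F,\mathcal L_0)$ of the torsor can contain translation and, in characteristic $p$, infinitesimal parts; this is precisely why the paper must invoke purity for torsors under extensions of \'etale by infinitesimal group schemes and why your ``$G_0$ is discrete, hence $\theta_h$ is constant'' diagonalization argument does not apply as stated. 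The diagonality in the paper comes instead from Nori's theorem, which produces the action directly through a homomorphism $A[n]\to\Aut(F,\mathcal L_0)$.
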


\begin{proof}
We follow the strategy of \cite{PZ19}.

\smallskip{\bf Step~1.}
Let $d=\dim A$, and let $H$ be a sufficiently ample line bundle on $A$.
For general choices of $H_i \in|H|$, the curve $C := H_1\cap\cdots\cap H_{d-1}$ is smooth, and $f_C\colon X_C= X\times_A C \to C$ is flat and smooth over generic point of $C$.
In this step, we aim to show that {\em there exists a finite flat morphism $C'\to C$ such that $f_{C'}\colon X_{C'}=X\times_A C' \to C'$ is a trivial fibration.}

\smallskip

Set $Z_{i} := H_1 \cap \cdots \cap H_i$ for $i=1,\ldots,d-1$. As $H_1 \in |H|$ is general, we may assume that $X_{Z_1}$ is a prime divisor. Then applying the adjunction formula, we have
\begin{equation}\label{eq:8F7E}
  \begin{aligned}
    0\simQ (K_{X/A}) |_{ X_{Z_1}^\nu }    \simQ K_{X_{Z_1}^\nu/Z_1} + \Delta_{X_{Z_1}^\nu},
  \end{aligned}
\end{equation}
where $\Delta_{X_{Z_1}^\nu}\ge0$.
By Theorem~\ref{rel-can-ellfib-sm}, we have $K_{X_{Z_1}^\nu/Z_1} \geQ 0$. In turn, we deduce that $K_{X_{Z_1}^\nu/Z_1} \simQ 0$ and $\Delta_{X_{Z_1}^\nu} = 0$, which implies that $X_{Z_1}$ is normal in codimension one. Inductively we can prove this holds for each $X_{Z_i}$. In particular for $i=d-1$, the surface $X_{C}$ is normal in codimension one and $K_{X_{C}^\nu/C} \simQ 0$. Since $X$ is Cohen-Macaulay in codimension two, we can show that $X_{C}$ is Cohen-Macaulay by induction. Together with that $X_{C}$ is regular in codimension one, we see that $X_C=(X_{C})^\nu$ by Serre's criterion.
 Finally, since $K_{X_C/C}\simQ 0$, we conclude Step~1 by Proposition~\ref{tri-rel-can-ellfib-isotrivial}.

% Let $\sigma\colon S\to X_{C}^\nu$ be a minimal resolution. Then $K_{S/C} = \sigma^*K_{X_{C}^\nu/C} -E$, {\color{red}where $E$ is an effective exceptional divisor}. But as $K_{S/C} \geQ 0$, we deduce that $E=0$ and $K_{S/C} \simQ 0$. Applying Proposition~\ref{tri-rel-can-ellfib-isotrivial}, we see that exists a finite flat morphism $C'\to C$ such that $S\times_C C' \cong F\times C'$. Moreover since $f\colon S \to C$ is a smooth morphism, we conclude that $S=X_{C}^\nu$

\medskip

Therefore, there exists a big open subset $A^\circ$ of $A$ covered by such curves $C$ as above. Over $A^\circ$, the fibration $f\colon X \to A$ is smooth, and all the closed fibers are isomorphic to a fixed one $F=f^{-1}(t_0)$ of $f$ for some $t_0 \in A^\circ$. In the following, we equip $F$ with a fixed abelian variety structure.

\medskip
{\bf Step~2.}  
In this step, we aim to show that {\em there is a finite group scheme $G$ and a $G$-torsor $I\to A^\circ$ such that $X^\circ \times_{A^\circ} I \to I$ is a trivial fibration, where $X^\circ := f^{-1}(A^\circ)$.}
\smallskip

  We follow the proof of \cite[Theorem~9.1]{PZ19}.
 %We use the notation in Step~1.
  Let $L$ be a very ample line bundle on $X$, and set $L_0 :=  L|_{F}$.
  We take $G$ to be the automorphism group scheme $\Aut(F, L_0)$ which is finite by \cite[Proposition~10.1]{PZ19},
  and $I$ to be the quasi-projective scheme over $A^\circ$ representing the Isom functor $\Isom_{A^\circ } \bigl((X^\circ , L|_{X^\circ}), ( A^\circ \times F, \mathrm{pr}_2^* L_0)\bigr)$ as constructed in \cite[Construction~7.5]{PZ19}.
  Following the proof of \cite[Theorem~9.1]{PZ19}, we only need to show that $I\to A^\circ$ is surjective and flat.
% For this, we use the result in Step~1.
  For each curve $C\subset A^\circ$ as in Step~1, there exists a finite flat morphism $C'\to C$ such that $f_{C'}\colon X_{C'}\to C'$ is a trivial fibration.
  By the base change property \cite[Proposition~7.8]{PZ19} and flattening decomposition \cite[Lecture~8]{Mum66}, it suffices to verify that $I_{C'}:= \Isom_{C'} \bigl( ( X_{C'}, L|_{X_{C'}} ),  (C' \times F, \mathrm{pr}_2^*  L_0)\bigr) \to C'$ is surjective and flat.
  To verify this, we apply \cite[Lemma~8.6]{PZ19}, which requires the condition that $-K_{X_{C'}/C'}$ is nef and semi-ample.
  This condition is satisfied because $f_{C'}$ is a trivial fibration.
  This finished the proof of this step.
  % \footnote{
  %   Here we remark that, \cite[Lemma~8.6]{PZ19} also assumes that $L^2=0$, but the isomorphisms between $( X_{C'}, L|_{X_{C'}} )$ and $(C' \times F, \mathrm{pr}_2^*  L_0)$ are taken over $C'$, thus we may replace $L$ by $aL - bF$ for some positive integers $a,b$ to assume that $L^2 = 0$. 
  % }
% By the base change property (\cite[Proposition~7.8]{PZ19}), it remains to verify at each closed point $t\colon \Spec k\to C'$ that $\Isom_k ( (F, t^*\psi^* L), (F,  L_0) )(k) \ne \emptyset$.  Actually, since $\deg (t^*\psi^* L) = \deg( L_0) > 0$, there exists a translation $\tau$ of $F$ such that $t^*\phi^* L \cong \tau^* L_0$, which confirms the existence of a polarized isomorphism $(F,t^*\psi^* L)\cong (F, L_0)$.

\smallskip{\bf Step~3.}
We extend the $G$-torsor $I\to A^\circ$ over $A^\circ$ to a $G$-torsor $\bar I\to A$ over $A$ as follows.

Regard $F$ as an abelian variety with identity $0_F$. Since $G=\Aut(F, L_0)$ is a finite group scheme, $G$ is an extension of an \'etale group scheme by an infinitesimal group scheme.
Thus by a ``Purity'' theorem (\cite[Proposition~1.4]{Eke88}), the torsor $I\to A^\circ$ can be extended to a torsor $\bar I\to A$.
  Furthermore, according to \cite[Proposition]{Nor83}, there exist an integer $n$, a homomorphism $\varphi\colon A[n]\to \Aut(F, L_0)$, and a morphism $\eta\colon  A\to\bar I$,
  which is $A[n]$-equivariant with the action of $A[n]$ on $\bar I$ induced by $\varphi$,
  leading to the following commutative diagram:
  \[
  \xymatrix@C=70pt@R=20pt{
      A \ar[dr]_{n_A} \ar[r]^{ \text{ $A[n]$-equivariant}}_{\eta} & \bar I \ar[d] \\
      & A \rlap{\,.}
  }\]
Let $A^{\circ\circ}$ denote the preimage of $A^{\circ}$ under $n_A\colon A\to A$. By the construction, we obtain an isomorphism $X\times_{A,n_A} A^{\circ\circ}\cong A^{\circ\circ} \times F$.

  \medskip
  {\bf Step~4.}
  We show that the birational map $\psi\colon Y :=X\times_{A,n_A}A \dashrightarrow Y':=F\times A$, which is determined by $X\times_{A,n_A} A^{\circ\circ}\cong A^{\circ\circ} \times F$, is an isomorphism.
  \smallskip

  Since $f\colon X\to A$ is equidimensional by \cite[Theorem 4.1]{PZ19}, $Y\to A$ is equidimensional too.
  Since $\psi$ is an isomorphism in codimension one, $Y$ is regular in codimension one, thus $Y$ is  normal by Lemma~\ref{lem:flat-base-change-integral}.

  Let $H_Y$ be an ample Cartier divisor on $Y$, and denote by $H_{Y'}= \psi_*H_Y$ the strict transform of $H_Y$. Observe that $H_{Y'}$ is relatively ample over $A$ since each fiber of $Y' \cong F\times A \to A$ is irreducible.  If necessary adding the pullback of an ample divisor on $A$, we may assume that $H_{Y'}$ is ample. Since $\psi\colon Y \dashrightarrow Y'$ is an isomorphism in codimension one, we have a natural ring isomorphism $\bigoplus_{m\geq 0 }H^0(Y, mH_Y) \cong \bigoplus_{m\geq 0 }H^0(Y', mH_{Y'})$, which implies that $\psi\colon Y \to Y'$ is an isomorphism. 

%{\color{orange}
%   We can verify that $\psi\colon Y\to Y'$ is an isomorphism as follows.
%    Indeed, by Kodaira's classification of singular elliptic fibers, any singular fiber of $f_1: Y\to A$ must be either covered by rational curves or a multiple of an elliptic curve. The former is ruled out since $Y'\to A$ has only elliptic fibers.
%    So the morphism $Y\to Y'$ is one-to-one on closed points by \cite[Proposition~1.3]{Kollar-Mori-1998}.
%    Then $\psi$ is an isomorphism by Zariski's main theorem.
%  }

  \medskip
  {\bf Step~5.}
 Finally if the action of $A[n]$ on $F$ is not faithful, we may set
$H=\mathop{\rm Ker} (A[n] \to \Aut(F, L_0))$ and $G=A[n]/H$. In turn, we get a faithful action of $G$ on $F$ and an action on $A' = A/H$ such that $X \cong (A\times F)/G \cong (A'\times F)/H$.
\end{proof}

\subsubsection{Explicit description of Case (C1)}\label{sec:explicite-of-C1}
Based on the analysis in \cite[pages~36-37]{BM77}, we give all the possibilities of $X$ in the context of Theorem~\ref{thm:elliptic-3fold}.
\smallskip

Recall that $X \cong (A \times F)/G$, where $A$ is an abelian variety, $F$ is an elliptic curve, and $G$ is a finite group scheme acting diagonally on $A \times F$ via injections $G\hookrightarrow A$ and $\alpha\colon G\hookrightarrow \Aut(F)\cong F\rtimes \Aut(F,0_F)$. In particular, $G$ is commutative.
As observed in \cite{BM77}, the commutativity of $G$ severely constrains its possible structure. 
In fact, the argument of \cite[page~36]{BM77} shows that $G$ must have the form
\[ 
  \alpha(G)\cong G_0\times \mathbb Z/n\mathbb Z,
\]
where $G_0$ is a finite subgroup scheme of $F$, and $\mathbb Z/n\mathbb Z\subseteq \Aut(F,0_F)$ is a cyclic subgroup with order $n = 2,3,4$ or $6$.
Moreover, if we denote by $\sigma\in G$ the element corresponding to some generator of $\mathbb Z/n\mathbb Z$, then $G_0 \subseteq \mathfrak F$, where $\mathfrak F\subset F$ is the fixed subscheme of $\sigma$.
There are the following possibilities of the fixed subscheme $\mathfrak F$:
\begin{itemize}
  \item[(a)] $n=2$, (so $\sigma = -1_F$), then $\mathfrak F \cong \mathop{\rm Ker} 2_{F}$.
  \item[(b)] $n=3$, then $\mathop{\rm ord}\mathfrak F = 3$, so \ \
  \begin{minipage}[t]{28em}
  	$\mathfrak F \cong \mathbb Z/3\mathbb Z$, if $\mathop{\rm char} \ne 3$;\par
  	$\mathfrak F \cong \alpha_3$, if $\mathop{\rm char} = 3$ ($j(F) = 0$ and thus $F$ is supersingular);\end{minipage}
  \item[(c)] $n=4$,
    then $\mathop{\rm ord}\mathfrak F = 2$, so \ \
    \begin{minipage}[t]{28em}
    $\mathfrak F \cong \ZZ/2\ZZ$ if $\mathop{\rm char} \ne 2$; \par
    $\mathfrak F \cong \alpha_2$ if $\mathop{\rm char} = 2$ ($F$ is supersingular); \end{minipage}
  \item[(d)] $n=6$, then $\mathfrak F = (e)$.
\end{itemize}

\smallskip
We can now give a complete list of the possible $G$:
\columnratio{0.4}
\begin{paracol}{2}
	a1) $X \cong A \times F /(\ZZ/2\ZZ)$.
	\switchcolumn
	\noindent The action is given by $(x,y) \mapsto (x+a, -y)$ for some $a\in A[2]$.
	Moreover, $A$ has $p$-rank $\ge1$ if $p=2$.
	
    \medskip
	\switchcolumn*
	a2) $X \cong A \times F /(\ZZ/2\ZZ\cdot \ZZ/2\ZZ)$.
	\switchcolumn
	\noindent The action is given by $(x,y) \mapsto (x+a,y+b)$ and $(x,y) \mapsto (x+c,-y)$ for some $a\not=c\in A[2]$ and $b\in F[2]$.
	Moreover, $A$ and $F$ are both ordinary if $p=2$.
	
    \medskip
	\switchcolumn*
	a3) $X \cong A \times F /((\ZZ/2\ZZ)^2\cdot \ZZ/2\ZZ)$.
	\newline\centerline{ ($p \neq 2$)}
	\switchcolumn
	\noindent The factor $\ZZ/2\ZZ$ acts as $(x,y) \mapsto (x+c,-y)$ and $(\ZZ/2\ZZ)^2$ acts by translation on both factors.
	
    \medskip
	\switchcolumn*
	a4) $X \cong A \times F /(\mu_2 \cdot \ZZ/2\ZZ)$. \newline\centerline{ ($p=2$) }
	\switchcolumn
	\noindent The factor $\ZZ/2\ZZ$ acts as $(x,y) \mapsto (x+c,-y)$ and $\mu_2 $ acts by translation on both factors.
	Here $A$ has $p$-rank $\ge1$ and $F$ is ordinary.
	
    \medskip
	\switchcolumn*
	a5) $X \cong A \times F /(\ZZ/2\ZZ\cdot \mu_2 \cdot \ZZ/2\ZZ)$. \newline\centerline{ ($p=2$)}
	\switchcolumn
	\noindent One factor $\ZZ/2\ZZ$ acts as $(x,y) \mapsto (x+c,-y)$ and the residue factor $\ZZ/2\ZZ\cdot \mu_2 $ acts by translation on both factors.
	Here, $A$ and $F$ are both ordinary.
	
    \medskip
	\switchcolumn*
	a6) $X \cong A \times F /(\alpha_2 \cdot \ZZ/2\ZZ)$. \newline\centerline{ ($p=2$)}
	\switchcolumn
	\noindent The factor $\ZZ/2\ZZ$ acts as $(x,y) \mapsto (x+c,-y)$ and $\alpha_2$ acts by translation on both factors.
	Here $A$ has $p$-rank $1$ and $F$ is supersingular.
	
    \medskip
	\switchcolumn*
	a7) $X \cong A \times F /(M_2 \cdot \ZZ/2\ZZ)$,
	 \newline\centerline{ ($p=2$)}
	\switchcolumn
	\noindent where $M_2$ is the non-split extension of $\alpha_p$ by $\alpha_p$.
	The factor $\ZZ/2\ZZ$ acts as $(x,y) \mapsto (x+c,-y)$ and  $M_2$ acts by translation on both factors.
	Here $A$ has $p$-rank $1$, and $F$ is supersingular.
	
    \medskip
	\switchcolumn*
	b1) $X \cong A \times F /(\ZZ/3\ZZ)$.
	\newline\centerline{ ($j(F)=0$)}
	\switchcolumn
	\noindent The group $\ZZ/3\ZZ$ acts as $(x,y) \mapsto (x+a,\omega y)$, where $a\in A[3]$ and  $\omega$ is an automorphism of $F$ of order $3$.
	
    \medskip
	\switchcolumn*
	b2) $X \cong A \times F /(\ZZ/3\ZZ)^2$.
	\newline\centerline{ ($j(F)=0$ and $p \neq 3$)}
	\newline\centerline{\ }
	\switchcolumn
	\noindent The action is given by $(x,y) \mapsto (x+a,\omega y)$ and $(x,y) \mapsto (x+b,y+c)$ for some $a,b\in A[3]$ and $c\in F[3]$, $\omega$ as before and $a\not=b,2b$, $\omega c=c$.
	
    \medskip
	\switchcolumn*
	b3) $X \cong A \times F /(\alpha_3\cdot \ZZ/3\ZZ )$.
	\newline\centerline{ ($j(F)=0, p=3$)}
	\switchcolumn
	\noindent The factor $\ZZ/3\ZZ$ acts as in b1) and $\alpha_3$ acts by translation on both factors.
	Here $A$ has $p$-rank $1$ and $F$ is supersingular.
	
    \medskip
	\switchcolumn*
	c1) $X \cong A \times F /(\ZZ/4\ZZ)$.
	\newline\centerline{ ($j(F)=12^3$)}
	\switchcolumn
	\noindent The group $\ZZ/4\ZZ$ acts as $(x,y) \mapsto (x+a,i y)$, where $a\in A[4]$ and  $i$ is an automorphism of $F$ of order $4$.
	
    \medskip
	\switchcolumn*
	c2) $X \cong A \times F /(\ZZ/2\ZZ \cdot \ZZ/4\ZZ )$.
	\newline\centerline{ ($j(F)=12^3, p \neq 2$)}
	\switchcolumn
	\noindent The group $\ZZ/4\ZZ$ acts as in c1) and $\ZZ/2\ZZ$ acts as $(x,y) \mapsto (x+b, y+c)$ for some $b\in A[2], c\in F[2]$ and $b\not=2a$, $ic=c$.
	
    \medskip
	\switchcolumn*
	c3) $X \cong A \times F /(\alpha_2 \cdot \ZZ/4\ZZ) $.
	\newline\centerline{ ($j(F)=0, p=2$)}
	\switchcolumn
	\noindent The factor $\ZZ/4\ZZ$ acts as in c1) and $\alpha_2$  by translation on both factors.
	Here $A$ has $p$-rank $1$ and $F$ is supersingular.

    \medskip
	\switchcolumn*
	d)  $X \cong A \times F /(\ZZ/6\ZZ)$.
	\newline\centerline{ ($j(F)=0$)}
	\switchcolumn
	\noindent The group $\ZZ/6\ZZ$ acts as $(x,y) \mapsto (x+a,-\omega y)$, where $a\in A[6]$ and  $\omega$ is as in b1).
\end{paracol}

\subsection{Case (C2): $f$ is a quasi-elliptic fibration}\label{sec:C-quasi-ell}
In this case, we prove the following theorem.

\begin{thm}\label{thm:C-quasi-ell}
  Let notation and assumptions be as in Assumption~\ref{Ass-C} {\rm(C2)}. Then
\begin{itemize}
  \item[\rm(i)] The characteristic $p$ is $2$ or $3$.
  \item[\rm(ii)] $X$ admits another fibration $g\colon X \to \mathbb{P}^1$ that is transversal to $f$:
    $$\xymatrix@R=4ex@C=4ex{&X\ar[d]_{f}\ar[r]^{g} & \mathbb{P}^1 \,.\\ &A  &}$$
    Moreover, a fiber of $g$ is either an abelian variety or a multiple of an abelian variety.
  \item[\rm(iii)] Let $X_1$ be the normalization of $X \times_{A} A^{(-1)}$ and denote by $f_1\colon X_1\to A_1 :=A^{(-1)}$ the projection. Then $f_1$ is a smooth fibration fibred by rational curves, which falls into one of the following cases:
  \begin{itemize}
    \item[\rm(1)] $f_1\colon X_1\to A_1$ is a projective bundle described as one of the following:
      \begin{itemize}
        \item[\rm(1.a)] $X_1\cong \mathbb{P}_{A_1}(\mathcal{O}_{A_1}\oplus \mathcal{L})$, where $\mathcal{L}^{\otimes p+1}\cong \mathcal O_X$;
        \item[\rm(1.b)] $X_1\cong \mathbb{P}_{A_1}(\mathcal{E})$, where $\mathcal{E}$ is a unipotent vector bundle of rank two, and
          there exists an \'etale cover $\mu\colon A_2\to A_1$ of degree $p^v$ {\rm(}$0\le v\le d:=\dim A${\rm)}, such that $\mu^*F^{(d-v)*}_{A_1}\mathcal{E}$ is trivial.
      \end{itemize}
    \item[\rm(2)] $p=2$, and there exists a purely inseparable isogeny $A_2 \to A_1$ of degree two, such that $X_2 :=X_1\times_{A_1}A_2\to A_2$ is a projective bundle described as one of the following:
      \begin{itemize}
        \item[\rm(2.a)] $X_2\cong \mathbb{P}_{A_2}(\mathcal{O}_{A_2}\oplus \mathcal{L})$, where $\mathcal{L}^{\otimes 4}\cong \mathcal{O}_{A_2}$;
        \item[\rm(2.b)] $X_2\cong \mathbb{P}_{A_2}(\mathcal{E})$, where $\mathcal{E}$ is a unipotent vector bundle of rank two, and there exists an \'etale cover $\mu\colon A_3\to A_2$ of degree $p^v$ {\rm(}$0\le v\le d${\rm)}, such that $\mu^*F^{(d-v)*}_{A_2}\mathcal{E}$ is trivial.
      \end{itemize}
  \end{itemize}
\end{itemize}
\end{thm}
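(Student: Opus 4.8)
Part (i) is immediate: the generic fiber $X_\eta$ is a normal projective $k(\eta)$-curve with $K_{X_\eta}\equiv 0$, hence (Section~\ref{sec:curve-pa1}) $K_{X_\eta}\sim 0$ and $p_a(X_\eta)=1$, and such a curve is smooth whenever $p\ge 5$; so $p\in\{2,3\}$. For (iii) the plan is to analyze the normalized Frobenius base change. Put $A_1=A^{1/p}$, let $X_1=(X\times_AA_1)^\nu$ with projections $\pi\colon X_1\to X$ and $f_1\colon X_1\to A_1$; since the quasi-elliptic fibration $f$ is separable and $A_1\to A$ is finite flat, $X\times_AA_1$ is integral by Lemma~\ref{lem:flat-base-change-integral}, so $X_1$ is a well-defined normal variety, and $f_1$ is equidimensional because $f$ is (as $-K_X$ is nef, cf.\ \cite{PZ19}). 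By the canonical bundle formula under purely inseparable base change~(\ref{eq:4C7W}), together with $K_X\simQ K_A\simQ K_{A_1}\simQ 0$,
\[
  0\ \simQ\ \pi^*K_X\ \simQ\ K_{X_1}+(p-1)(E+V),
\]
where $E\ge 0$ is the $f_1$-horizontal conductor of the normalization, $V\ge 0$ is $f_1$-vertical, and on the generic fiber $(p-1)E$ is the conductor of the normalization of $X_\eta\otimes_{k(\eta)}k(\eta)^{1/p}$. One then reads off this generic fiber from Proposition~\ref{prop:ga1-geo-reduced}: it is a smooth rational curve already over $k(\eta)^{1/p}$ when $p=3$, or when $p=2$ and $\deg_{k(\eta)}\mathfrak p=2$; otherwise ($p=2$, $\deg_{k(\eta)}\mathfrak p=4$) it becomes one after a further purely inseparable degree-$p$ base change $A_2\to A_1$.

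Feeding this back gives $V=0$, $-K_{X_1}\simQ(p-1)E$, that the reduced conductor $E_{\rm red}$ is an $f_1$-section when $p=3$ (resp.\ becomes a section after the extra base change when $p=2$), and --- via adjunction on the fibers, $(p-1)E\cdot(\text{fiber})=2$, and $p_a(\text{fiber})=0$ --- that $X_1$ is regular and $f_1$ is a smooth $\PP^1$-fibration. Since a smooth $\PP^1$-fibration over the abelian variety $A_1$ with a section is a projective bundle $\PP_{A_1}(\mathcal E)$ for a rank-two $\mathcal E$, writing the section as a quotient $\mathcal E\twoheadrightarrow\mathcal M$ with kernel $\mathcal N$ and using the relative Euler sequence and $K_{A_1}\simQ 0$, the relation $-K_{X_1}\simQ(p-1)E$ turns into an identity of line bundles on $A_1$; once regularity of $X_1$ is used to pin down $\mathcal M\mathcal N^{-1}=E_{\rm red}|_{E_{\rm red}}$, this forces either $\mathcal E\cong\mathcal O_{A_1}\oplus\mathcal L$ with $\mathcal L^{\otimes(p+1)}\cong\mathcal O_{A_1}$ (case (1.a)), or $\mathcal E$ a nonsplit extension of trivial line bundles (a unipotent bundle), in which case the structure theory of vector bundles on abelian varieties (cf.\ \cite{Oda71}) yields the trivialization $\mu^*F^{(d-v)*}_{A_1}\mathcal E\cong\mathcal O$ after a suitable étale cover, i.e.\ (1.b). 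If $f_1$ has no section one is forced into $p=2$, and the degree-two purely inseparable base change $A_2\to A_1$ produces one; applying the same analysis to $f_2\colon X_2:=(X_1\times_{A_1}A_2)^\nu\to A_2$ then yields (2.a)/(2.b).

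For (ii) the plan is to apply Theorem~\ref{thm:hor-map-1} to $B=\Sigma$, the closure in $X$ of the non-smooth locus of $f$; by Proposition~\ref{prop:ga1-geo-reduced}(1) this is an irreducible $f$-horizontal prime divisor, finite over $A$. By (iii) the reduced conductor $E_{\rm red}$ (in $X_1$, or in $X_2$ when $p=2$) is an $f_1$-section, hence $E_{\rm red}\cong A_1$ is abelian; as $\pi$ is a finite purely inseparable universal homeomorphism carrying $E_{\rm red}$ onto $\Sigma$, it induces a finite purely inseparable $\rho\colon E_{\rm red}\to\Sigma^\nu$. Since $\Sigma^\nu$ has maximal Albanese dimension, $K_{\Sigma^\nu}\geQ 0$ (Proposition~\ref{char-abel-var}), and comparing with $K_{E_{\rm red}}\simQ 0$ through the formula~(\ref{eq:pullback-cano}) for $\rho$ forces $K_{\Sigma^\nu}\simQ 0$, so $\Sigma^\nu$ is an abelian variety; moreover the bundle computation in (iii) gives $E_{\rm red}|_{E_{\rm red}}\simQ 0$, and since $\pi^*\Sigma$ is a positive multiple of $E_{\rm red}$ this yields $\Sigma|_{\Sigma^\nu}\equiv 0$. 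Theorem~\ref{thm:hor-map-1} now produces a fibration $g\colon X\to C$ onto a curve with $\Sigma$ one of its fibers. As $g$ is equidimensional and $\Sigma$ is finite over $A$, every fiber of $g$ is finite over $A$, so a general (and therefore normal) fiber $G$ has $K_G\simQ 0$ and maximal Albanese dimension; hence $G$ is an abelian variety (Proposition~\ref{char-abel-var}), and adjunction shows every fiber of $g$ is supported on an abelian variety. Finally $C\cong\PP^1$: if the target had positive genus, then --- the fibers of $f$ being rational curves --- $g$ would contract them and factor as $h\circ f$ with $h\colon A\to C$, forcing $f(G)=h^{-1}(c)\subsetneq A$ for a general fiber $G$ of $g$ and contradicting $G\to A$ finite; so $C$ has genus $0$ and, being dominated by a rational curve, is $\PP^1$.

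The main obstacle is the heart of (iii): proving that the normalized (possibly iterated) Frobenius base change $f_1\colon X_1\to A_1$ --- or $f_2\colon X_2\to A_2$ --- is genuinely a \emph{smooth} $\PP^1$-fibration with $-K_{X_1}$ equal to $(p-1)$ times the reduced conductor section. This forces one to marry the canonical-bundle formula under purely inseparable base change with the fine local classification of genus-one curves in characteristics $2$ and $3$; the subsequent classification of the resulting projective bundles is comparatively routine, except for the unipotent case, which requires the structure theory of vector bundles on abelian varieties.
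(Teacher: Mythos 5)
Your overall strategy---normalized Frobenius base change, the conductor formula of \S\ref{sec:base-change}, adjunction via Lemma~\ref{lem:adj-charA}, the semi-ampleness criterion of Theorem~\ref{thm:hor-map-1} for the transversal fibration, and the projective-bundle analysis with Lange--Stuhler in the unipotent case---is the same as the paper's, and parts (i), the final bundle classification, and most of your argument for (ii) match the paper's Steps 1--2 and 4--5.

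However, there is a genuine gap exactly at the point you yourself flag as ``the main obstacle'': you assert that $V=0$ and that $f_1$ is a smooth $\PP^1$-fibration with $X_1$ regular ``via adjunction on the fibers, $(p-1)E\cdot(\text{fiber})=2$, and $p_a(\text{fiber})=0$''. Neither conclusion follows from this data: knowing that the generic fiber is a smooth conic and that $-K_{X_1}$ has degree $2$ on it says nothing about the fibers over special points of $A_1$ (a priori $f_1$ could acquire singular, multiple or otherwise degenerate fibers over a divisor in $A_1$), and the vanishing of the vertical part $V$ requires a separate argument. In the paper both facts are obtained only \emph{after} the transversal fibration exists: one first proves $C|_{C^\nu}\simQ 0$ directly from the adjunction $0\simQ (K_{X_1}+nC+V)|_{C^\nu}$ (with an auxiliary purely inseparable base change to $C^\nu$ when $n=1$, $p=2$), invokes Theorem~\ref{thm:hor-map-1} to produce $g_1\colon X_1\to\PP^1$ with irreducible abelian fibers, uses connectedness of those fibers to kill $V$, and then---the essential step you omit---performs the isogeny base change by multiplication by $n=\deg(G'_t\to A_1)$, which makes a general fiber of $g_1$ a birational section, and shows via the vanishing of the conductor and Zariski's main theorem that $X_1\times_{A_1}B\cong B\times\PP^1$, whence smoothness of $f_1$ descends. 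Your proposed order of proof ((iii) before (ii)) is therefore circular: your (ii) borrows $E_{\rm red}|_{E_{\rm red}}\simQ 0$ and the bundle structure from (iii), while the only available route to (iii) passes through the fibration of (ii). A secondary inaccuracy: the dichotomy between cases (1) and (2) is governed by whether the horizontal conductor equals $2C$ with $C$ a section or equals $C$ with $C\to A_1$ purely inseparable of degree two, i.e.\ by whether the point $\mathfrak q$ of Proposition~\ref{prop:ga1-geo-reduced} is rational, not by $\deg_{k(\eta)}\mathfrak p$ alone; in case (3b) of that proposition one lands in case (2) even when $\deg_{k(\eta)}\mathfrak p=2$.
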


\begin{proof}
The assertion (i) is well known. Let us prove the remaining ones.

Note that $X \times_A A^{\frac{1}{p}}$ is not normal by Proposition \ref{prop:ga1-geo-reduced}.
Let $X_1:=(X \times_A A^{\frac{1}{p}})^{\nu}$ be the normalization, and let $\pi\colon X_1\to X$, $f_1\colon X_1\to A^{\frac1p}$ be the natural morphisms.
We have the following commutative diagram
\begin{equation*}%\label{diag:qs-ell-bc1}
 \vcenter{\xymatrix{
      &X_1\ar[r]\ar[rd]_{f_1}\ar@/^1pc/[rr]^{\pi} &X \times_A A^{\frac{1}{p}} \ar[r]\ar[d]^{f'}&X\ar[d]^{f}\\
      &  &A_1:=A^{\frac{1}{p}}\ar[r]^>>>>>{\sigma}&A \rlap{\,.}
  }}
\end{equation*}
By results of Section~\ref{sec:base-change}, we can write that
  \begin{equation}
    \pi^* K_X \sim K_{X_1} + (p-1)\mathcal C,  \quad\text{with } \mathcal C\ge0,
  \end{equation}
where $\mathcal C$ can be chosen such that $\mathcal C|_{X_{1,\eta}}$ coincides with the conductor of the normalization of the generic fiber of $f'$.  Hereafter, we fix such $\mathcal C$.
According to Proposition~\ref{prop:ga0} (1), $\deg_{K(A_1)}(p-1)\mathcal C =2$.
Write $(p-1)\mathcal C = H + V$, where $H$ is the $f_1$-horizontal part and $V$ the vertical part of $\mathcal C$.
Note that $H$ is irreducible by Proposition~\ref{prop:ga1-geo-reduced} (1), thus $H=nC$, where $C$ is reduced and $n=1$ or $2$.
Let $D$ be the reduced divisor supported on $\pi(C)$. We have $\pi^*D = C$ or $pC$ by Proposition~\ref{prop:ga1-geo-reduced}.

\medskip
{\bf Step~1.}
{\it We prove that and both $C$ and $D$ are semi-ample with numerical dimension one.
}

\smallskip
Since $X_1\to X$ is a finite purely inseparable morphism, the semi-ampleness of one of the divisors $C$ or $D$ implies the semi-ampleness of the other.
We shall show that $D$ is semi-ample with numerical dimension one.
  By Theorem~\ref{thm:hor-map-1}, it suffices to verify that $D|_{D^{\nu}}\simQ 0$, which is equivalent to that $C|_{C^{\nu}}\simQ 0$.

\smallskip
{\it {\sc Claim}.  Let $T$ be a $f_1$-horizontal prime divisor of $X_1$. Then $T|_{T^\nu} \geQ 0$.}
\smallskip

To prove this claim, set $\bar{T} := \pi(T)$.
By Lemma~\ref{lem:adj-charA} (1), $\bar{T}|_{\bar{T}^{\nu}} \simQ (K_X+ \bar{T})|_{\bar{T}^{\nu}} \geQ 0$.
Then, since $X_1 \to X$ is finite and purely inseparable, we have $T|_{T^{\nu}} \geQ 0$.
\qed\smallskip

Consequently, since $C$ is $f_1$-horizontal, we have $C|_{C^{\nu}}\geQ 0$.
Thus, applying Lemma~\ref{lem:adj-charA} to
  \begin{equation}
    0\simQ (K_{X_1} + \mathcal C)|_{C^\nu} \simQ (K_{X_1} + nC+V)|_{C^\nu}
  \end{equation}
 we see that $C$ is an abelian variety, $V|_{C^\nu}\simQ 0$, and moreover, 
in case $n=2$, we have $C|_{C^\nu} =C|_{C}\simQ 0$. 
Thus we only need to show $C|_{C} \simQ 0$ in case $n=1$.
In this case, we have $p=2$, and $C\to A_1$ is purely inseparable of degree $2$ since $C|_{X_{K(A_1)}}$ is a point which is inseparable of degree $2$ over $\Spec K(A_1)$ by Proposition \ref{prop:ga1-geo-reduced}.
By doing the base change $C \to A_1$ and setting $Z=(X_1\times_{A_1} C)^{\nu}$, we have the following commutative diagram:
  $$\xymatrix{
    Z\ar[r]_<<<<{\nu}\ar[dr]_{f_2}\ar@/^1pc/[rr]^>>>>>{\phi}\ar[dr]\ar@/^2pc/[rrr]^{\pi'}
      &X_1\times_{A_1} C\ar[r]\ar[d]&X_1\ar[r]_{\pi}\ar[d]_{f_1}&X\ar[d]^{f} \\
      &C\ar[r]&A_1:=A^{\frac{1}{p}}\ar[r]&A \rlap{\,.}
  } $$
where $\phi, \pi',f_2$ denote the natural morphisms fitting into the above diagram. Since $C\times_{A_1} C$ is non-reduced and $f_1\colon X_1\to A_1$ is smooth over the generic point of $A_1$, we see that $\phi^*C=2E$ for some $\ZZ$-divisor $E$ on $Z$.
  Then we have \begin{equation}
    \pi'^*K_X = K_Z + 2E + V',
  \end{equation}
  where $V'\ge0$ is vertical over $C$.
  Consequently, $(K_Z + 2E + V')|_{E^\nu} \simQ 0$.
  Since $C|_{C}\geQ0$, we have $E|_{E^\nu}\geQ 0$.
  It follows from Lemma~\ref{lem:adj-charA} that $E|_{E^\nu} \simQ 0$, which is equivalent to that $C|_{C} \simQ 0$.

  In summary, the semi-ample divisor $D$ (resp.\ $C$) induces a fibration $g\colon X\to B$ (resp.\ $g_1\colon X_1\to B_1$) to a curve.
 %The associated morphisms $g\colon X\to B$ and $g_1\colon X_1\to B_1$ of $D$ and $C$ respectively are morphisms to curves.
  Since general fibers of $f$ and $f_1$ are rational curves, we have $g(B) = g(B_1) =0$.
 In summary, there is a commutative diagram:
  $$\xymatrix@C=2ex@R=2ex{
    &\PP^1 \ar[rr] && \PP^1\\
    X_1 \ar[rr]\ar[dd]_{f_1}\ar[ru]^{g_1} && X\ar[dd]_{f}\ar[ru]^<<{g} \\
    \\
    A_1 \ar[rr]   && A \,.
  }$$

\medskip

  \medskip
  {\bf Step~2.}  We prove the following statements:
  \begin{itemize}\it
    \item[\rm(a)] $D$ is isomorphic to an abelian variety (we have shown this for $C$ in the last step).
    \item[\rm(b)] $X$ (resp.\ $X_1$) is regular at codimension one points of $D$ (resp.\ $C$).
    \item[\rm(c)] $V = 0$.
    \item[\rm(d)] A general fiber of $g$ (resp.\ $g_1$) is an abelian variety, and a special fiber of $g$ (resp.\ $g_1$) is a multiple of an abelian variety.
  \end{itemize}

  First, since $C$ and $D$ are irreducible and $C|_{C^\nu} \simQ D|_{D^\nu} \simQ 0$, we have $(K_X + D)|_{D^\nu} \simQ (K_{X_1} + C)|_{C^\nu} \simQ 0$, thus the statements (a,\thinspace b) follow from Lemma~\ref{lem:adj-charA}.

  To show the last two statements, we denote by $G_t$ (resp.\ $G^1_t$) the fiber of $g$ (resp.\ $g_1$) over $t\in \PP^1$.
  We first consider the fibration $g_1\colon X_1\to \PP^1$.
  Write $G^1_t=mT+V'$, where $T$ is an $f_1$-horizontal component and $V'$ is the remaining part with $T\not\subset \mathop{\rm Supp}V'$. 
  By the claim in Step~1, we have $T|_{T^\nu}\geQ 0$. 
  Since $C$ is irreducible, we see that $C\simQ rG^1_t$ for some positive rational number $r$, and it follows that $C|_{T^\nu}\simQ  rG^1_t|_{T^\nu}\simQ 0$.
  Thus \[
    0 \simQ (K_{X_1} + nC + V+G^1_t)|_{T^\nu}\simQ  (K_{X_1} +  mT+V'+ V)|_{T^\nu}.\]
  By Lemma~\ref{lem:adj-charA} (2), we see that $T$ is an abelian variety, $V|_{T^\nu}=V'|_{T^\nu}=0$, which implies that $\mathop{\rm Supp}V' \cap \mathop{\rm Supp}T= \mathop{\rm Supp}V \cap \mathop{\rm Supp} T = \emptyset$.
  Since the fiber $G^1_t$ is connected, we have $V'=0$, and consequently $G^1_t=mT$ is (a multiple of) an abelian variety.
  It follows that $\mathop{\rm Supp}G^1_t \cap \mathop{\rm Supp}V =\emptyset$, and thus $V=0$, which is the statement (c).
  Moreover, since $g_1$ is a fibration to a curve, by \cite[Corollary~7.3]{BadescuAS}, a general fiber of $g$ is integral, so we obtain the statement (d) for $g_1$.

  Finally, by writing $G_t = mT + V'$ and using $(K_X + G_t)|_{T^\nu} \simQ (K_X + mT + V')|_{T^\nu} \simQ 0$, Lemma~\ref{lem:adj-charA} implies that $G_t$ is also (a multiple of) an abelian variety.
Thus we obtain the statement (d) for $g$.

  \medskip
   {\bf Step~3.} {\it
   We show that there exists an isogeny $\tau \colon B\to A_1$ of abelian varieties such that $X_1 \times _{A_1} B \cong B \times \PP^1$.
   In turn, $f_1\colon X_1 \to A_1$ is a smooth morphism.
   }
\smallskip

Take a general fiber $G^1_t$ of $g_1\colon X_1\to \PP^1$ over $t\in \PP^1$, which is an abelian variety as established in the previous step.  
We denote by $n$ the degree $\deg(G^1_t\to A_1)$, which is independent of $t\in \PP^1$.
  Then the morphism $\times n\colon B:=A_1 \to A_1$ factors through the isogeny $G^1_t\to A_1$ for a general $t$ (see \cite[page~169, Remark]{MumfordAV}).

  By Lemma~\ref{lem:flat-base-change-integral}, the fiber product $X_{1,B} := X_1\times_{A_1}B$ is integral.
Let $W$ be the normalization of $X_{1,B}$.
  We have the following commutative diagram
$$
    \xymatrix{
      & &\mathbb P^1 \ar[rd] \\
      W \ar@/^4mm/[urr]^q \ar[r]^{\nu} \ar[rd]^p \ar@/^6mm/[rr]|{\,\pi\,} & X_{1,B} \ar[r] \ar[d] & X_1 \ar[d]^{f_1} \ar[r]^{g_1} & \mathbb P^1 \\
      & B \ar[r]^{\tau} & A_1 \rlap{,} 
    }
$$
  where $q$ is the fibration resulting from the Stein factorization of $W\to X_1 \buildrel g_1\over\to\PP^1$.
  By our choice of the base change $B\to A_1$, a general fiber $Q_{t}$ of $q$ is a birational section of $p$.
  Since each fiber of $g_1$ contains no vertical components over $A_1$ and $\nu$ is finite, each fiber of $q$ contains no vertical components over $B$.
  Thus, for each fiber $Q_t$ of $q$, the morphism $Q_t\to B$ is birational.
  Denote by $\mathcal N$ the conductor divisor of $\nu$.
  Remark that, since $X_1\to A_1$ is generically smooth, $\mathcal N$ is vertical over $B$.
  It follows that $K_W + (p-1)\pi^*\mathcal C + \mathcal N \simQ \pi^*(K_{X_1} + (p-1)\mathcal C) \simQ 0$.
  Thus \[
       ( K_W + (p-1)\pi^*\mathcal C + \mathcal N + Q_{t}) |_{Q^\nu_t}   \simQ 0.
  \]
Applying Lemma~\ref{lem:adj-charA} we see that $Q_t = Q^\nu_t$ is an abelian variety and $\mathcal N|_{Q_t} \simQ 0$. In turn, we conclude that
\begin{itemize}
\item $\mathcal N = 0$, thus $X_{1,B}$ is normal by Lemma~\ref{lem:flat-base-change-integral}; and
\item for each $t\in \PP^1$, the projection $Q_t\to B$ is an isomorphism, hence the morphism $X_{1,B}\cong W\to B\times\PP^1$ is an isomorphism by Zariski's main theorem.
\end{itemize}

\medskip{\bf Step~4.}
 We consider the case $(p-1)\mathcal C=2C$ and prove the statement (iii-1).
\smallskip

  We first show that $(p+1) C|_{C} \sim 0$.
  Denote by $\tilde\pi\colon C\to D$ the induced finite morphism of abelian varieties and note that $\pi^*D = pC$ by Proposition~\ref{prop:ga1-geo-reduced}.
  By Step~2 (c), $K_X+D$ is Cartier at codimension-one points of $D$.
  Therefore, the adjunction formula gives
  \begin{equation}
    (K_X + D)|^w_{D} \sim K_D \sim 0,
  \end{equation}
  where $|^w$ is the restriction on $D$ by first considering a big open subset $D^\circ \subset D$ over which $K_X + D$ is Cartier and then extending it (see \cite[pages~173-174]{Kol92}).
  Thus $\pi^* (K_X + D) |_{C} = \tilde\pi^* ( (K_X+D)|^w_{D} )\sim 0$ (here the pullback of $\pi$ makes sense since $\pi$ is finite).
  On the other hand, we have
  \begin{equation*}
      \pi^* (K_X + D) |_{C} \sim (K_{X_1}+ 2C +\pi^*D)|_{C} \sim (p+1)C|_{C}.
  \end{equation*}
  It follows that $(p+1) C|_{C} \sim 0$.

  Next, we equip the smooth morphism $f_1\colon X_1\to A_1$ with a projective bundle structure over $A_1$.
  Note that since $C\to A_1$ is birational and $C$ is isomorphic to an abelian variety, $C\to A_1$ is an isomorphism; this gives a section $s\colon A_1\to C$ of $f_1$ which is $f_1$-ample.
    Set $\mathcal{E}=f_{1*}\mathcal{O}_{X_1}(C)$. 
    Since each fiber of $f_1$ is isomorphic to $\mathbb{P}^1$, by exactly the same proof of \cite[Proposition~V.2.2]{Hartshorne-AG}, we can show that $f_1^*\mathcal E\to \mathcal O_{X_1}(C)$ is surjective and induces an isomorphism $X_1 \buildrel\sim\over\to \PP(\mathcal{E})$.
  Under this isomorphism, the section $s\colon A_1\to C$ corresponds to the exact sequence
  \begin{equation}\label{eq:G78V}
  	0\to \mathcal{O}_{A_1}\to \mathcal{E}\to s^*(\mathcal{O}_{X_1}(C)|_C)\to 0.
  \end{equation}
  Since the divisor $C|_C$ is torsion,  $\mathcal{L}:=s^*(\mathcal{O}_{X_1}(C)|_C)$ is a torsion line bundle with the same order.

  If the above exact sequence (\ref{eq:G78V}) splits, then $\mathcal{E}\cong \mathcal{O}_{A_1}\oplus \mathcal{L}$,  and the torsion order of $\mathcal{L}$ divides $p+1$.
  If (\ref{eq:G78V}) does not split, then $\mathrm{Ext}^1(\mathcal{L},\mathcal{O}_{A_1})\cong H^1({A_1},\mathcal{L}^{-1}) \ne 0$, which implies that $\mathcal{L}\cong\mathcal{O}_{A_1}$ by \cite[Lemma~7.19]{E-vdG-M}.
Thus $\mathcal{E}$ is an extension of $\mathcal{O}_{A_1}$ by $\mathcal{O}_{A_1}$, which corresponds to a nonzero element $\xi \in H^1(A_1, \mathcal{O}_{A_1}) \cong \mathrm{Ext}^1(\mathcal{O}_{A_1},\mathcal{O}_{A_1})$.
With respect to the Frobenius action $F^*$ on $H^1(A_1, \mathcal{O}_{A_1})$, we have a decomposition (\cite[pages~143-148]{MumfordAV})
$$H^1(A_1, \mathcal{O}_{A_1}) =  V_n \oplus V_s, $$
where $V_n$ is the nilpotent part and $V_s$ is the semisimple part. Moreover, the semisimple part $V_s$ admits a basis $\alpha_1,\ldots,\alpha_r$, where $r = \dim V_s$, such that $F^*(\alpha_i) = \alpha_i$ for $i=1,\ldots,r$.
Write $\xi = \xi_n + \xi_s$.
Since $\dim V_n = \dim A-r = d-r$, we see that $F^{(d-r)*} \xi_n = 0$.
By \cite[Satz~1.4]{Lange-Stuhler77}, there exist \'etale covers $\pi_i\colon A_i\to A$ ($i=1,\ldots,r$) of degree $p$ such that $\pi_i^* \alpha_i = 0$.
Define $\mu$ as the fiber products $(\cdots((A_1 \times_ A A_2) \times_A A_3) \times_A \cdots)\times_A A_r \to A$. Then $\mu^* \xi_s = 0$.
Therefore $\mu^*(F^{(d-r)*}\xi) = 0$, which is equivalent to $\mu^*(F^{(d-r)*}E)$ being a trivial extension.

\medskip{\bf Step~5.}
 We consider the case $(p-1)\mathcal C=C$ and prove the statement (iii-2).
\smallskip

In this case, $p=2$, and $C\to A_1$ is purely inseparable of degree 2. We do a further base change $A_2:=C \to A_1$.
Let $X_2= X_1\times_{A_1} A_2$. We obtain the following commutative diagram
 \begin{align*}\label{diag:qs-ell-bc2}
 \xymatrix{
 &X_2 \ar[r]^{\pi_2}\ar[d]_{f_2} &X_1\ar[r]^{\pi_1}\ar[d]_{f_1} &X\ar[d]^{f}\ar[r]^{g} &\mathbb{P}^1 \\
 &A_2\ar[r]&A_1 \ar[r]  &A \rlap{\,.}
  } \end{align*}
We see that $\pi_2^*C = 2C_2$, where $C_2$ is a section of $X_2\to A_2$. Similar to Step~4, we have $X_2 \cong \mathbb{P}_{A_2}(\mathcal{E}_2)$, where $\mathcal{E}_2 = f_{2*}\mathcal{O}_{X_2}(C_2)$.

If  $\mathcal{E}_2$  does not split, then it is as described as in Step~4 accordingly.
Otherwise $\mathcal{E}_2 \cong \mathcal{O}_{A_1}\oplus \mathcal{L}_2$ and we conclude the proof by determining the torsion order of $C_2|_{C_2}$.
By Proposition~\ref{prop:ga1-geo-reduced}, we have $\pi^*D = C$ if $\deg_{K(A)} D = 2$ and $\pi^*D = 2C$ if $\deg_{K(A)} D = 4$.
It follows that
\begin{equation*}
  \pi^* (K_X + D) |_{C} \sim (K_{X_1}+\mathcal C+\pi^*D)|_{C} \sim
  \begin{cases}
     C|_{C} &\text{if $\deg_{K(A)}D = 2$}\\
    2C|_{C} &\text{if $\deg_{K(A)}D = 4$.}
  \end{cases}
\end{equation*}
As in Step~4, we have $\pi^*(K_X + D)|_{C}\sim0$, thus $2 C|_{C} \sim 0$.
Since $\pi_2^*C = 2C_2$, we see that $4C_2|_{C_2} \sim 0$.
\end{proof}

\subsection{Case (C3): $f$ is an inseparable fibration}
\begin{thm}\label{thm:C-nonreduced}
Let notation and assumptions be as in Assumption~\ref{Ass-C} (C3). Then
\begin{itemize}
  \item[\rm(i)] The characteristic $p$ is $2$ or $3$.
  \item[\rm(ii)] $X$ is equipped with another fibration $g\colon X \to \mathbb{P}^1$ transversal to $f$, whose fibers are abelian varieties or multiples of abelian varieties:
    $$\xymatrix@R=4ex@C=4ex{&X\ar[d]_{f}\ar[r]^{g} & \mathbb{P}^1 \,.\\ &A  & }$$

  \item[\rm(iii)] Let $A_1= A^{(-1)}$, and denote $X_1 = (X\times_A A_1)_{\rm red}^\nu$.
    Then one of the following happens.
    \begin{itemize}
      \item[\rm (1)]  $X\cong A_1 \times\mathbb P^1/\mathcal F$, where $\mathcal F$ is a foliation on $A_1\times\PP^1$ of height one and $\mathop{\rm rank} \mathcal F < \dim A$.
      \item[\rm(2)] $p=2$ and $X \cong X_1/\mathcal F_1$ for some height one foliation $\mathcal F_1$ with $\mathop{\rm rank} \mathcal F_1 < \dim A$, where $X_1$ is one of the following:
        \begin{itemize}
          \item[\rm (2.a)] $X_1\to A_1$ is separable and can be trivialized by an \'etale isogeny $\tau_2\colon A_2\to A_1$ of degree $2$, namely $X_1 \times_{A_1} A_2 \cong A_2 \times \PP^1$.
          \item[\rm (2.b)] $X_1\to A_1$ is separable and can be trivialized by the Frobenius base change $\tau_2\colon A_2:=(A_1)^{(-1)}\to A_1$, namely $X_1 \times_{A_1} A_2 \cong A_2 \times \PP^1$.
          \item[\rm (2.c)] $X_1\to A_1$ is inseparable, and $X_1\cong A_2\times\PP^1/\mathcal F_2$, where $A_2$ is an abelian variety and $\mathcal F_2$ is a (height-one) foliation with $\mathop{\rm rank} \mathcal F_2 <\dim A$.
        \end{itemize}
    \end{itemize}
\end{itemize}
\end{thm}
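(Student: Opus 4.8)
The argument will closely parallel the proof of Theorem~\ref{thm:C-quasi-ell}, the difference being that the $f$-horizontal divisor driving the transversal fibration is now the \emph{movable part} produced by the inseparable Frobenius base change rather than a conductor divisor. Assertion (i) is immediate: $K_X\equiv 0$ forces $K_{X_\eta}\equiv 0$, hence $p_a(X_\eta)=1$, and a geometrically non-reduced curve of arithmetic genus one exists only when $p\in\{2,3\}$ by Proposition~\ref{prop:curve-nonreduced} (cf.\ the remark after Assumption~\ref{Ass-C}).

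For (ii) the plan is to base change along the Frobenius $\tau\colon A_1:=A^{1/p}\to A$, set $X_1=(X\times_AA_1)^\nu_{\rm red}$ with $\pi\colon X_1\to X$ and $f_1\colon X_1\to A_1$, and invoke formula (\ref{eq:6QN0}): since $f$ is inseparable,
\[
  0\simQ\pi^*K_X\sim K_{X_1}+(p-1)(\mathfrak M+\mathfrak F),
\]
where $\mathfrak M\neq 0$ is $f_1$-horizontal movable, $\mathfrak F$ is the fixed part, and the horizontal part of $(p-1)(\mathfrak M+\mathfrak F)$ has degree $2$ on the generic fibre $X_{1,\eta}$, a genus-zero curve over $K(A_1)$ by Proposition~\ref{prop:curve-nonreduced}. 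I would then select an irreducible $f_1$-horizontal component $C$ of $\mathfrak M$ and prove $C|_{C^\nu}\equiv 0$. First $C|_{C^\nu}\geQ 0$, by applying Lemma~\ref{lem:adj-charA}(1) to $\bar C:=\pi(C)\subset X$ and pulling back along the finite purely inseparable $\pi$ (as in the claim in Step~1 of the proof of Theorem~\ref{thm:C-quasi-ell}). Then, restricting the displayed equivalence to $C^\nu$: if $C$ occurs in $(p-1)(\mathfrak M+\mathfrak F)$ with multiplicity $\geq 2$ — automatic when $p=3$ — Lemma~\ref{lem:adj-charA}(3) gives $C|_{C^\nu}\simQ 0$ at once; the multiplicity-one cases (only for $p=2$: a degree-two cover $C\to A_1$, or a sum of sections, according to Proposition~\ref{prop:curve-nonreduced}(3)) are handled as in Step~1 of the proof of Theorem~\ref{thm:C-quasi-ell}, via the auxiliary base change along $C^\nu\to A_1$ (where $\phi^*C=2E$ and Lemma~\ref{lem:adj-charA} applies to $E$) or using the movability of $\mathfrak M$. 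Once $C|_{C^\nu}\equiv 0$, Theorem~\ref{thm:hor-map-1} applied to $(X_1,C)$ and to $(X,\pi(C))$ produces the transversal fibrations $g_1\colon X_1\to\PP^1$ and $g\colon X\to\PP^1$; the description of their fibres (abelian varieties, or multiples thereof) follows from the adjunction argument in Step~2 of the proof of Theorem~\ref{thm:C-quasi-ell} combined with Lemma~\ref{lem:adj-charA}.

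For (iii), I would first show that $f_1\colon X_1\to A_1$ is a smooth morphism with $\PP^1$-fibres: its generic fibre is a genus-zero curve, and since the fibres of $g_1$ are abelian varieties (hence contain no rational curves), $g_1$ restricts to a finite map on each fibre of $f_1$, so the bi-fibration $(f_1,g_1)$ is equidimensional and $f_1$ must be smooth with reduced fibres (mirroring Step~3 of the proof of Theorem~\ref{thm:C-quasi-ell}). Next, $C$ — a fibre of $g_1$ and an abelian variety finite over $A_1$ — meets a general fibre of $f_1$ in $\deg(C\to A_1)\in\{1,2\}$ points, so $(f_1,g_1)\colon X_1\to A_1\times\PP^1$ has degree $\deg(C\to A_1)$. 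If this degree is $1$ (always so for $p=3$), then $(f_1,g_1)$ is finite and birational, hence an isomorphism by normality and Zariski's main theorem, giving $X_1\cong A_1\times\PP^1$ and case (1). If the degree is $2$ (so $p=2$), I would distinguish according to whether $f_1$ is separable: for separable $f_1$ one trivialises $X_1\to A_1$ after an étale degree-two base change or after the Frobenius base change $A_1^{1/p}\to A_1$ (depending on whether the generic fibre is a pointless conic, or $C\to A_1$ is purely inseparable), yielding (2.a) and (2.b); for inseparable $f_1$, one further Frobenius base change together with the same reasoning gives $X_1\cong A_2\times\PP^1/\mathcal F_2$ as in (2.c). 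In every case $X=X_1/\mathcal F_1$ with $\mathcal F_1=\mathcal F_{X_1/X}$ a height-one foliation; the bound $\mathop{\rm rank}\mathcal F_1<\dim A$ comes from the fact that $\mathcal F_1$ is transverse to the fibres of $f_1$, hence injects into $f_1^*\mathcal T_{A_1}$, but cannot equal it generically — otherwise $X_1\to X$ would have degree $p^{\dim A}$, which is incompatible with the inseparability of $f$ — and this can be formalised via the ``pushing-down foliations'' correspondence of \S\ref{sec:etaleLift}.

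I expect the main obstacle to lie in (ii): making the proof of $C|_{C^\nu}\equiv 0$ uniform over all the shapes of the horizontal part of $\mathfrak M+\mathfrak F$ predicted by Proposition~\ref{prop:curve-nonreduced}(3), in particular the multiplicity-one $p=2$ cases where the direct adjunction argument fails and one must exploit the movability of $\mathfrak M$ or an auxiliary base change. A secondary difficulty in (iii) is the bookkeeping of the successive étale and purely inseparable base changes trivialising $f_1$, and the verification that the descent data relating $X$, $X_1$ and $A_1\times\PP^1$ (or $A_2\times\PP^1$) are indeed height-one foliations of rank $<\dim A$.
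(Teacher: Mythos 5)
Your overall architecture (Frobenius base change to $X_1$, the decomposition $0\simQ\pi^*K_X\sim K_{X_1}+(p-1)(\mathfrak M+\mathfrak F)$ with $\mathfrak M$ movable coming from $f_1^*H^0(\Omega_{A_1/A})$, then the case division for (iii) by $\deg_{K(A_1)}G_1\in\{1,2\}$ and separability of $f_1$, settled via Zariski's main theorem after étale or Frobenius base changes) matches the paper. The gap is in your central step for (ii). The paper does \emph{not} prove $C|_{C^\nu}\equiv 0$ for a horizontal component $C$ of $\mathfrak M$ and does not route through Theorem~\ref{thm:hor-map-1}; it only establishes $T|_{T^\nu}\geQ 0$ for horizontal prime divisors and then invokes \cite{CWZ23}*{Proposition~5.2} applied to the pair $(X_1,(p-1)(\mathfrak M+\mathfrak F))$, which directly yields that the \emph{movable system} $\mathfrak M$ is semi-ample with $\nu(\mathfrak M)=1$ and that the general fiber of the induced fibration is an abelian variety. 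Your substitute is not yet a proof: Lemma~\ref{lem:adj-charA}(3) only applies when $C$ occurs with multiplicity $\ge2$, which is automatic for $p=3$ but fails for $p=2$ when the horizontal part of $\mathfrak M+\mathfrak F$ is a sum of reduced multiplicity-one components (e.g.\ two sections $\mathfrak q_1+\mathfrak q_2$, or a moving section plus a fixed one). In those configurations adjunction via Lemma~\ref{lem:adj-charA}(2) only gives $T_1|_{C^\nu}\simQ 0$ for the \emph{other} components $T_1$, and for $C$ itself one gets $C|_{C^\nu}\simQ M'|_{C^\nu}\geQ0$ for another member $M'$ of the system --- i.e.\ exactly the statement $\nu(\mathfrak M)=1$ that you are trying to prove, so the argument is circular. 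The auxiliary base change along $C^\nu\to A_1$ rescues only the subcase where $C\to A_1$ is purely inseparable of degree two (where $\phi^*C=2E$); when $C\to A_1$ is étale of degree two it produces two sections and returns you to the unhandled case, and ``using the movability of $\mathfrak M$'' is not an argument. You need either the cited proposition from \cite{CWZ23} or an analogue of Theorem~\ref{thm:hor-map-1} for movable systems.

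Two smaller points. First, Theorem~\ref{thm:hor-map-1} cannot be applied to $(X_1,C)$ as you state, since its hypothesis $K\simQ 0$ fails on $X_1$ (only $K_{X_1}+(p-1)(\mathfrak M+\mathfrak F)\simQ0$ holds); only the application to $(X,\pi(C))$ is meaningful, and even that presupposes the unproved $\pi(C)|_{\pi(C)^\nu}\equiv0$. Second, in (iii) the bound $\rk\mathcal F<\dim A$ is obtained in the paper simply from $\deg(X_1\to X)<p^{\dim A}$ (the non-reducedness of $X\times_AA_1$ forces the quotient to have degree strictly less than $\deg(A_1\to A)$); your transversality argument arrives at the same place but is more roundabout than necessary.
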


\begin{proof}
In this case $X\times_A A_1$ is non-reduced. Let $X_1=(X\times_A A_1)_{\rm red}^{\nu}$. Denote by $\pi_1\colon X_1\to X$ the induced morphism, which is a purely inseparable morphism of height one with $\deg \pi_1 < p^{\dim A}$.
% We have the following commutative diagram:
% \[\xymatrix{ X_1 \ar[r]^{\pi_1}\ar[d]_{f_1}&X\ar[d]^f\\ \llap{$A_1:=$\;}A^{\frac{1}{p}}\ar[r]&A
% }\]
We have $\pi_1^*K_X \sim K_{X_1} - (p -1)\det\mathcal{F}_{X_1/X}$, where $\lvert-\det\mathcal{F}_{X_1/X}\rvert$ has non-trivial movable part generated by $f_1^*H^0(\Omega_{A^{(-1)}/A})$ (see Section~\ref{sec:base-change}).
We may write that
\begin{equation}
  \lvert -\det \mathcal F_{X_1/X} \rvert= \mathfrak M + \mathfrak F,
\end{equation}
where $\mathfrak M$ is the movable part and $\mathfrak F$ is the fixed part.
In turn we obtain that $\pi_1^*K_X \sim K_{X_1} + (p -1)( \mathfrak M + \mathfrak F)$ where $\deg_{K(A_1)}\mathfrak M >0$, and $\deg_{K(A_1)} (p-1)(\mathfrak M + \mathfrak F) = 2$ by results of Section~\ref{sec:curve-pa1}.

\smallskip
By the claim in Step~1 of Theorem~\ref{thm:C-quasi-ell}, we have $T|_{T^\nu} \geQ 0$ for any $f_1$-horizontal prime divisor $T$ of $X_1$. 
Then, since $\deg_{K(A_1)}(p-1)\mathfrak F \leq 1$, no matter $p=2$ or $p=3$, we can apply
 \cite[Proposition~5.3]{CWZ23} to the pair $(X_1, (p-1) \mathfrak M +  (p-1) \mathfrak F)$ and obtain the following:
\begin{itemize}
  \item $\mathfrak M$ is semi-ample with $\nu(\mathfrak M) =1$;
  \item Denote by $g_1\colon X_1\to \PP^1$ the induced fibration by $\mathfrak M$. Then a general fiber of $g_1$ is an abelian variety and a special fiber is a multiple of an abelian variety.
\end{itemize}

From this, by a similar argument of Step~2 (c) in Theorem~\ref{thm:C-quasi-ell}, we see that there is no $f_1$-vertical part in $\mathfrak M+\mathfrak F$.
% Let $T$ be an $f_1$-horizontal irreducible and reduced component of (a member of) $\mathfrak M$.
 %Then for any other component $T_1$ of $\mathfrak M+\mathfrak F$ different from $T$, we have $T_1|_{T^\nu}\simQ 0$.
% $\mathfrak M + \mathfrak F$ has no $f_1$-vertical component.

% {\it Proof of the claim.}
% The proof is similar to Step~2 of Theorem~\ref{thm:C-quasi-ell}. 
% We may write a member of $\mathfrak M + \mathfrak F$ as $T+ T' + V$, where $V$ is the $f_1$-vertical part, and $T'$ is the remaining part.
%   We have
%   \[
%      \bigl(K_{X_1} + (p-1)(T + T' + V)\bigr) |_{T^\nu} \simQ 0.
%   \]
%   By Lemma~\ref{lem:adj-charA} (2) and (3), we have $T'|_{T^\nu} \simQ V|_{T^\nu}\simQ 0$.
%   Since $g_1$ has connected fibers, there exists a horizontal $T$ such that $T\cap V \ne \varnothing$. We obtain that $V = 0$.

Moreover, since $X_1 \to X$ is purely inseparable there exists a fibration $g\colon X\to \PP^1$ fitting into the following commutative diagram:
  $$\xymatrix@C=2ex@R=2ex{
    &\PP^1 \ar[rr] && \PP^1\\
    X_1 \ar[rr]\ar[dd]_{f_1}\ar[ru]^{g_1} && X\ar[dd]_{f}\ar[ru]^{g} \\
    \\
    \llap{$A^{(-1)}=:\;$}A_1 \ar[rr]   && A \rlap{\,.}
  }$$

\smallskip
Let $G_1$ be a general fiber of $g_1$.
Then $\deg _{K(A_1)} G_1 =1$ or $2$ since $\deg_{K(A_1)} \mathfrak M \le 2$.
We distinguish between the following two cases:
\begin{itemize}
  \item[\rm (1)] $\mathrm{deg}_{K(A_1)} G_1 = 1$;
  \item[\rm (2)] $\mathrm{deg}_{K(A_1)} G_1 = 2$, which happens only when $p=2$.
\end{itemize}

In case~(1), the general fiber $G_1$ of $g_1$ maps to $A_1$ isomorphically.
Thus, $g_1$ has no multiple fibers, and it follows that every fiber of $g_1$ is isomorphic to $A_1$.
Then, the induced morphism $X_1 \xrightarrow{(f_1,g_1)} A_1\times\PP^1$ is an isomorphism by Zariski's main theorem.
Therefore, $X\cong A_1\times\PP^1/\mathcal F$ for some height one foliation $\mathcal F$.
By $0\simQ\pi_1^*K_{X} \sim K_{A_1\times\PP^1} - (p-1)\det\mathcal F$, we have
\begin{equation}\label{eq:9388}
  \det \mathcal F\sim \mathrm{pr}_2^*\mathcal O_{\PP^1}(-2/(p-1)).\end{equation}
Moreover, since $\deg (X_1\to X) < \dim A$, we have $\mathop{\rm rank} \mathcal F < \dim A$.

\smallskip
In case~(2), we fall into one of the following three subcases:
\begin{itemize}
  \item[(2.a)] $f_1$ is separable and $G_1\to A_1$ is \'etale.
  \item[(2.b)] $f_1$ is separable and $G_1\to A_1$ is purely inseparable.
  \item[(2.c)] $f_1$ is inseparable.
\end{itemize}

In case (2.a), since the general fiber $G_1$ of $g_1$ are \'etale of degree $2$ over $A_1$, they are isomorphic to each other.
Do the \'etale base change $A_2:= G_1 \to A_1$. Then $X_2=X_1\times_{A_1} A_2$ is normal, and the Stein factorization of $X_2 \to X_1\xrightarrow {g_1} \PP^1$ gives a fibration $g_2\colon X_2 \to \PP^1$. To summarize, we have the following commutative diagram:
\begin{equation*}%\label{eq:5X2E}
  \xymatrix@C=1.5ex@R=2ex{
    & \PP^1\ar[rr]^{\delta} && \PP^1\\
    \llap{$X_1\times_{A_1} A_2 =: \;$} X_2 \ar[ru]^{g_2} \ar[rr]^<<<<<<{\pi_2} \ar[dd]_{f_2} && X_1 \ar[dd]_{f_1} \ar[ur]^<<{g_1} \\
    \\
  \llap{$G_1 =: \;$}A_2 \ar[rr]^{\tau_2}    && A_1 \rlap{\,.}
}
\end{equation*}
Note that the scheme $G_1\times_{A_1} A_2$ has two disjoint irreducible components $G_2',G_2''$, i.e., $\pi_2^*G_1 \sim G_2' + G_2''$.
Thus, $\delta$ is a separable morphism of degree two.
Therefore, each fiber $G_2$ of $g_2$ is mapped birationally to $A_2$ via $f_2$.
By $(K_{X_2} + G_2)|_{G_2^\nu} \simQ 0$, applying Lemma~\ref{lem:adj-charA}, we see that $G_2$ is an abelian variety, hence $G_2\to A$ is an isomorphism. It follows that the morphism
$X_2 \xrightarrow{(f_2,g_2)}  A_2 \times \PP^1$ is an isomorphism.

\smallskip
We use a similar argument for case (2.b).
Take $\tau_2\colon A_2:=A_1 \buildrel F\over\to A_1$ to be the Frobenius morphism, which factors through $G_1\to A_1$ for a general fiber $G_1$ of $g_1$ (\cite[page~169, Remark]{MumfordAV}).
Let $\nu\colon X_2 \to X_1 \times_{A_1} A_2$ be the normalization morphism. Since $f_1$ is generically smooth, the projection $f_2\colon  X_2 \to A_2$ is in fact a fibration. Notice that $G_1\times_{A_1} A_2$ is non-reduced. Then by $\deg_{K(A_2)}G_1\times_{A_1} A_2=2$, we conclude $\pi_2 ^* G_1 = 2G_2$ for some integral divisor $G_2$ in $X_2$.
Let $g_2\colon  X_2 \to \PP^1$ be the induced fibration from the Stein factorization of $X_2 \to X_1 \xrightarrow{g_1} \PP^1$. Then a general fiber $G_2$ is mapped to $A_2$ birationally via $f_2$.
Let $\mathcal N$ be the conductor divisor of $\nu$, which is vertical over $A_2$ since the generic fiber of $f_1$ is smooth over $K(A_1)$. Now fix a fiber $G_2^0$, and take a horizontal irreducible component $G_2'$, which is unique since $\deg_{K(A_2)}G_2^0 = 1$ and write that $G_2^0 = G_2' + G_2''$.
Then by
 $$0\simQ (\pi_2^* K_{X_1} + G_2)|_{(G_2')^\nu} \simQ (K_{X_2} + G_2' + G_2'' + \mathcal N)|_{(G_2')^\nu},$$
and applying Lemma~\ref{lem:adj-charA}, we see that $G_2\to A_2$ is an isomorphism and $G_2''|_{(G_2')^\nu} =\mathcal N|_{(G_2')^\nu} = 0$. It follows that that $\Supp \mathcal N \cap \Supp G_2^0 = \emptyset$ and $G_2''=0$ since $G_2^0$ is connected. In turn we conclude that $\mathcal N=0$. As before, we show that the morphism $X_2 \xrightarrow{(f_2,g_2)}  A_2 \times \PP^1$ is an isomorphism.

\smallskip
Finally, we deal with the case (2.c).
Take $\tau_2\colon A_2:=A_1 \buildrel F\over\to A_1$ to be the Frobenius morphism, and let $X_2 := (X_1\times_{A_1} A_2)^\nu_{\rm red}$.
We have the following diagram
\[\xymatrix{
    X_2 \ar[r] \ar@/^2pc/[rr]|{\,\pi_2\,} \ar[dr]^{f_2} & X_1\times_{A_1} A_2 \ar[r]\ar[d] & X_1 \ar[d]^{f_1}\\
                                                     &A_2 \ar[r]^{\tau_2}   & A_1 \rlap{\,.}
  }
\]
Since $f_1$ is inseparable, there exist a moving linear system $\mathfrak N$ and $V_2\geq 0$ on $X_2$ such that \begin{equation}
  K_{X_2} + \mathfrak N + V_2\sim \pi_2^* K_{X_1}.
\end{equation}
We have that $K_{X_2} + \mathfrak N + V_2 +\pi_2^*\mathfrak M \simQ \pi_2^* \pi_1^* K_X \simQ 0$.
By $\deg_{K(A_2)} (K_{X_2}) = -2$, we see that $\deg_{K(A_2)} (\mathfrak N) =\deg_{K(A_2)} (\pi_2^*\mathfrak M) =1$. Fix a divisor $N_0\in\mathfrak N$, and take an irreducible $f_2$-horizontal component $N'$ and write that $N_0 = N' +N''$. Arguing as in the case (2.b), we can prove that $N_0 = N'$ and $V_2=0$. In turn we can prove that $X_2 \cong A_2 \times \PP^1$.
\end{proof}

\section{Irregular $K$-trivial threefolds}\label{sec:3-folds}%
In this section we focus on irregular $K$-trivial threefolds.

\subsection{Structure theorem}\label{sec:thm-3fold} %In dimension three, to summarize we have
\begin{thm}\label{thm:3fold}
  Let $X$ be a normal $\QQ$-factorial projective threefolds with $K_X\equiv0$. Denote the Albanese morphism of $X$ by $a_X\colon X\to A$.
  Assume $\dim a_X(X) > 0$. Then $X$ can be described as follows.
  \begin{itemize}
    \item[\rm(A)] If $\dim a_X(X) =3$, then $X$ is an abelian variety.
    \item[\rm(B)] If $\dim a_X(X) =1$, under the condition that 
      \begin{itemize}
        \item either {\rm(i)} $X$ is strongly $F$-regular and $K_X$ is $\ZZ_{(p)}$-Cartier;
        \item or {\rm(ii)} $X$ has at most terminal singularities and $p \geq 5$,
      \end{itemize}
      the Albanese morphism $a_X$ is a fibration and there exists an isogeny of elliptic curves $A'\to A$,
      such that  $X\times_A A' \cong A'\times F$, where $F$ is a general fiber of $a_X$. More precisely, $X\cong A'\times F/H$,
      where $H$ is a finite group subscheme of $A'$ acting diagonally on $A'\times F$.
    \item[\rm(C)] If $\dim a_X(X) =2$, then $a_X$ is a fibration and one of the following holds:
      \begin{itemize}
        \item[\rm(C1)] $a_X$ is a smooth fibration and there exists an isogeny of abelian surfaces $A'\to A$,
          such that  $X\times_A A' \cong A'\times E$, where $E$ is an elliptic curve appearing as a general fiber of $a_X$. More precisely, $X\cong A'\times E/H$,
          where $H$ is a finite group subscheme of $A'$ acting diagonally on $A'\times E$, with a full classification as in Section~\ref{sec:explicite-of-C1}.
        \item[\rm(C2)] $p=2$ or $3$, and $a_X$ is a quasi-elliptic fibration. Set $X_1$ to be the normalization of $X \times_{A} A^{(-1)}$ and $f_1\colon X_1\to A_1 :=A^{(-1)}$ the induced morphism.
          Then  $f_1\colon X_1\to A_1$ is a smooth fibration fibred by rational curves, which falls into one of the following two specific cases:
          \begin{itemize}
            \item[\rm(2.1)] In the first case, $f_1\colon X_1\to A_1 $ has a section, and
              \begin{itemize}
                \item[\rm(2.1a)] either $X_1\cong \mathbb{P}_{A_1}(\mathcal{O}_{A_1}\oplus \mathcal{L})$, where $\mathcal{L}^{\otimes p+1}\cong \mathcal O_X$; or
                \item[\rm(2.1b)] $X_1\cong \mathbb{P}_{A_1}(\mathcal{E})$, where $\mathcal{E}$ is a unipotent vector bundle of rank two, and
                  there exists an \'etale cover $\mu\colon A_2\to A_1$ of degree $p^v$ for some $v\le 2$ such that $\mu^*F^{(2-v)*}_{A_1/k}\mathcal{E}$ is trivial.
              \end{itemize}
            \item[\rm(2.2)]
              In the second case, $p=2$, and there exists a purely inseparable isogeny $A_2 \to A_1$ of degree two, such that $X_2 :=X_1\times_{A_1}A_2$ is a projective bundle over $A_1$ described as follows
              \begin{itemize}
                \item[\rm(2.2a)] $X_2\cong \mathbb{P}_{A_2}(\mathcal{O}_{A_2}\oplus \mathcal{L})$, where $\mathcal{L}^{\otimes 4}\cong \mathcal{O}_{A_2}$; or
                \item[\rm(2.2b)] $X_2\cong \mathbb{P}_{A_2}(\mathcal{E})$, where $\mathcal{E}$ is a unipotent vector bundle of rank two,
                  and there exists an \'etale cover $\mu\colon A_3\to A_2$ of degree $p^v$  for some $v\le 2$ such that $\mu^*F^{(2-v)*}_{A_1/k}\mathcal{E}$ is trivial.
              \end{itemize}
          \end{itemize}
        \item[\rm(C3)] $p=2$ or $3$, and $a_X$ is inseparable.
          Set $X_1=(X \times_{A} A_1(:={A^{(-1)}}))_{\rm red}^{\nu}$. Then the projection $X_1 \to A_1$ is a smooth morphism, and
          \begin{itemize}
            \item[\rm(3.1)] either $X_1=A_1\times\mathbb P^1$ and $X=A_1\times\mathbb P^1/\mathcal F$ for some smooth rank~one foliation $\mathcal F$, which can be described concretely (see Section~\ref{sec:descrip-C31}); or
            \item[\rm(3.2)] $p=2$, and there exists an isogeny of abelian surfaces $\tau\colon A_2 \to A_1$ such that $X_1\times_{A_1}A_2 \cong A_2\times\mathbb P^1$, where either
              \begin{itemize}
                \item[\rm(3.2a)] $\tau\colon A_2 \to A_1$ is an \'etale of degree two; or
                \item[\rm(3.2b)] $\tau=F_{A_1/k}\colon A_2 := {A_1^{(-1)}} \to A_1$ is the relative Frobenius.
              \end{itemize}
          \end{itemize}
      \end{itemize}
  \end{itemize}
\end{thm}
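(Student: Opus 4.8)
The plan is to argue case by case on the irregularity $q := q(X)\in\{1,2,3\}$, in each case checking the hypotheses of one of the structure theorems already established and then quoting it. \textbf{Case $q=3$:} here $X$ has maximal Albanese dimension, so Proposition~\ref{char-abel-var}(1) gives $K_X\geQ0$; combined with $K_X\equiv0$ an effective numerically trivial divisor must vanish, so $K_X\simQ0$, and Proposition~\ref{char-abel-var}(2) identifies $X$ with an abelian variety — this is (A). \textbf{Case $q=1$:} then $A$ is an elliptic curve and the Albanese image, which generates $A$, equals $A$, so $\dim a_X(X)=1$; the two alternatives in hypothesis~(B) are exactly those allowed in Theorem~\ref{thm:q1}, which then applies verbatim and produces (B): the fibration $f=a_X$, the isomorphism of all fibers to a fixed $F$, an isogeny $A'\to A$ with $X\times_AA'\cong A'\times F$, and $X\cong A'\times F/H$ with $H=\ker(A'\to A)$ acting diagonally.

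\textbf{Case $q=2$} is where the real work lies; here $A$ is an abelian surface and a priori $\dim a_X(X)\in\{1,2\}$, and the first task is to exclude $\dim a_X(X)=1$. Suppose it held. Taking the Stein factorization of the composite $X\to a_X(X)\hookrightarrow A$ gives a fibration $f\colon X\to C'$ onto a smooth projective curve whose Jacobian surjects onto $A$ (because $a_X(X)$ generates $A$), forcing $g(C')\ge2$. A general fiber $F$ of $f$ is a normal surface with $K_F\equiv0$, hence $mK_F\sim0$ for a suitable $m$ (abundance for surfaces), so $\omega_{X/C'}^{\otimes m}$ is trivial on general fibers, while numerically $\omega_{X/C'}^{\otimes m}\equiv -mf^*K_{C'}$ has negative horizontal degree. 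This contradicts the (weak) positivity of the direct image $f_*\omega_{X/C'}^{\otimes m}$, which is available in positive characteristic precisely because the general fibers are $K$-trivial surfaces with torsion — hence globally generated — pluricanonical sheaves (cf.\ \cites{Pat14,Eji17,Eji19}); equivalently one invokes the subadditivity $\kappa(X)\ge\kappa(C')+\kappa(X_\eta)\ge1$ for threefolds fibered over curves, contradicting $\kappa(X)\le0$. Hence $\dim a_X(X)=2$; a surface inside the abelian surface $A$ must be $A$ itself, so $a_X$ has relative dimension one. The generic fiber $X_{K(A)}$ is a regular curve, so $(X_{K(A)},0)$ is klt and Corollary~\ref{cor:fibration} gives that $a_X\colon X\to A$ is a fibration with $K_X\simQ0$. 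Thus Assumption~\ref{Ass-C} is in force with $d=\dim A=2$.

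It remains to read off (C1)--(C3) from the trichotomy of Assumption~\ref{Ass-C}. If $X_\eta$ is smooth, Theorem~\ref{thm:elliptic-3fold} provides the isogeny $A'\to A$ with $X\times_AA'\cong A'\times E$ and $X\cong A'\times E/H$, and Section~\ref{sec:explicite-of-C1} lists every possibility, giving (C1). If $X_\eta$ is non-smooth but geometrically integral, Theorem~\ref{thm:C-quasi-ell} gives $p\in\{2,3\}$, the transversal fibration $g\colon X\to\PP^1$ with fibers (multiples of) abelian varieties, and the smooth $\PP^1$-fibration $f_1\colon X_1\to A_1=A^{1/p}$ with its projective-bundle description; substituting $d=2$ (so auxiliary étale covers have degree $p^v$ with $v\le2$ and the Frobenius twist is $F^{(2-v)*}$) turns the list of Theorem~\ref{thm:C-quasi-ell}(iii) into (2.1)--(2.2). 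If $X_\eta$ is geometrically non-reduced, Theorem~\ref{thm:C-nonreduced} gives $p\in\{2,3\}$, the transversal fibration, and the structure of $X_1=(X\times_AA^{1/p})^\nu_{\rm red}$; substituting $d=2$ (each foliation there of rank $<\dim A$ now has rank one) and using the explicit description of (3.1) from Section~\ref{sec:descrip-C31} yields (3.1)--(3.2). The one extra point specific to threefolds in this last case is the assertion that $X_1\to A_1$ is \emph{smooth}, i.e.\ that the inseparable sub-case (2.c) of Theorem~\ref{thm:C-nonreduced} does not occur when $\dim A=2$; this is checked using the degree bound $\deg(X_1\to X)<p^{\dim A}$ on the height-one cover together with the classification of geometrically non-reduced genus-one curves over $K(A)$ from Section~\ref{sec:curve-pa1} (in particular Remark~\ref{rmk:pdeg2}).

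I expect the step ruling out $\dim a_X(X)=1$ when $q=2$ to be the main obstacle: it is the only place where the argument cannot simply cite Sections~\ref{sec:str-thms-albdim=1} and~\ref{sec:str-thms-reldim=1}, and it rests on a positivity statement for $f_*\omega_{X/C'}^{\otimes m}$ that is delicate in positive characteristic and here is made usable only by the strong constraint that the general fibers are $K$-trivial surfaces. The secondary technical point is the verification that, for threefolds, $X_1\to A_1$ is already smooth after a single normalized Frobenius base change in case (C3); everything else is bookkeeping — translating "$\dim A=d$" into "$d=2$" in the statements of Sections~\ref{sec:str-thms-albdim=1}--\ref{sec:str-thms-reldim=1}.
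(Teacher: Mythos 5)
Your overall architecture matches the paper's: (A) via Proposition~\ref{char-abel-var}, (B) via Theorem~\ref{thm:q1}, and the three subcases of (C) via Theorem~\ref{thm:elliptic-3fold}, Theorem~\ref{thm:C-quasi-ell} and Theorem~\ref{thm:C-nonreduced}, with Remark~\ref{rmk:pdeg2} disposing of subcase (2.c) when $\dim A=2$; Corollary~\ref{cor:fibration} is indeed what makes $a_X$ a fibration in case (C). The place where you depart from the paper --- and where the gap lies --- is the step you yourself flag as the main obstacle: excluding $\dim a_X(X)=1$ when $q=2$. The paper does not prove this; its proof is organized as a trichotomy on $\dim a_X(X)$ rather than on $q$, and the only mechanism it has for ruling out a fibration onto a curve of genus $\ge 2$ is Theorem~\ref{thm:pos-EP}(ii) (resp.\ Theorem~\ref{thm:3fold-rdim2-semiample}), whose proofs go through \cite{Zha19a} and Ejiri's positivity theorem (Theorem~\ref{thm:pos-Eji}) and therefore require exactly the hypotheses of case (B) --- strong $F$-regularity of the generic fiber with $\ZZ_{(p)}$-Cartier canonical divisor, or terminal singularities with $p\ge5$. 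These hypotheses are absent from case (C).

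Your proposed substitute --- that $f_*\omega_{X/C'}^{\otimes m}$ is weakly positive ``because the general fibers are $K$-trivial surfaces with torsion, hence globally generated, pluricanonical sheaves'' --- does not hold up in positive characteristic. Global generation of $\omega_F^{\otimes m}$ on fibers does not yield weak positivity of the pushforward in characteristic $p$; every positivity statement of this kind available here (Theorem~\ref{thm:pos-Eji}, \cites{Pat14,Eji17,Eji19}) requires $F$-purity of the fibers over a dense open set, and in characteristics $2$ and $3$ the $K$-trivial surface fibers need not be $F$-pure (this is precisely the ``bad phenomenon'' the paper is designed to handle). Likewise the subadditivity $\kappa(X)\ge\kappa(C')+\kappa(X_\eta)$ for threefolds over curves is only known under comparable hypotheses. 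So as written this step is unproved. The same issue silently affects your case $q=3$, where you assert maximal Albanese dimension: $\dim a_X(X)=2$ is excluded by Corollary~\ref{cor:fibration} (a fibration onto $A$ is surjective), but $\dim a_X(X)=1$ with $q=3$ again requires the positivity machinery. The repair is simply to adopt the paper's case division: argue on $\dim a_X(X)\in\{1,2,3\}$, deduce the value of $q$ as a \emph{conclusion} in each branch (with the branch $\dim a_X(X)=1$ carrying the hypotheses (i)/(ii) of Theorem~\ref{thm:q1}), and note that the three branches are exhaustive. With that reorganization the rest of your proposal is correct and coincides with the paper's proof.
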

\begin{proof}
When $\dim a_X(X) =3$, we can apply Proposition~\ref{char-abel-var} and show that $X \cong A$. This is Case (A).

When $\dim a_X(X) =1$, we apply Theorem~\ref{thm:q1} and show that $q(X)=1$ and $X$ is described as in Case (B).

At last, we consider the case $\dim a_X(X) = 2$. When  $a_X\colon X \to A$ is separable, we can apply Theorem~\ref{thm:elliptic-3fold} and \ref{thm:C-quasi-ell} to obtain the cases (C1) and (C2) respectively. When $a_X\colon X \to A$ is inseparable, we can apply Theorem~\ref{thm:C-nonreduced} to obtain (C3). Note that  as $\dim A=2$, by Remark~\ref{rmk:pdeg2}, we see that the generic fiber of $X_1\to A_1$ is geometrically normal, and thus Case (2.c) in Theorem~\ref{thm:C-nonreduced} does not occur here.
\end{proof}
%\begin{proof}[Proof of Theorem~\ref{thm:main}]
%  We can derive the statements of Theorem~\ref{thm:main} from Theorem~\ref{thm:3fold} by using (\ref{eq:MYXX}).
%\end{proof}

%Case (C1) has been fully classified. From the description above, we see that case (C3), where $a_X$ is assumed to be inseparable, seems simpler than case (C2). Especially, case (C3.1) is most simple, since in this case $X$ is the quotient of $A\times\PP^1$ by a rank one foliation $\mathcal F$, so we are able to describe the foliation concretely; in particular, the case when $p=3$ and $a_X$ is an inseparable morphism of relative dimension one is clear now.

\subsection{Concrete description and examples}%
In this subsection, we first describe concretely the foliation in case (C3.1), then give examples of case (C3.2) and (C2).
\subsubsection{Notation}\label{sec:nota-basis}
(1) On the projective line $\PP^1$, we fix an affine open cover $\PP^1 = \AA^1_{(t)} \cup \AA^1_{(s)}$, with $s=1/t$.
We identify the  line bundle $\mathcal O_{\PP^1}(i)$ as
\[\mathcal O_{\mathbb P^1}(i)|_{\mathbb A^1_{(t)} } = \mathcal O_{\AA^1_{(t)}}\cdot 1,\quad
  \mathcal O_{\mathbb P^1}(i)|_{\mathbb A^1_{(s)} } = \mathcal O_{\AA^1_{(s)}}\cdot \frac1{s^i};
\]
and the twisted tangent bundle $\mathcal T_{\PP^1}(i)$ as \[
  \mathcal T_{\mathbb P^1}(i)|_{\mathbb A^1_{(t)} } = \mathcal O_{\mathbb A^1_{(t)}}\cdot \partial_t,\quad
  \mathcal T_{\mathbb P^1}(i)|_{\mathbb A^1_{(s)} } = \mathcal O_{\mathbb A^1_{(s)}}\cdot \frac{1}{s^{i}}\partial_s,\quad
  \text{with \ } \p_t = s^2\p_s.
\]

(2) Let $A$ be an abelian surface. We can choose a basis $\alpha,\beta$ of Lie algebra $\mathop{\rm Lie} A$ which falls into one of the following cases:
\begin{itemize}
  \item[(i)]   $\alpha^p= \beta^p=0$ (superspecial);
  \item[(ii)]  $\alpha^p = \beta$ and $\beta^p=0$ (supersingular, not superspecial);
  \item[(iii)] $\alpha^p = \alpha$ and $\beta^p=0$ ($p$-rank one);
  \item[(iv)]  $\alpha^p = \alpha$ and $\beta^p=\beta$ (ordinary).
\end{itemize}
 Then $\mathcal T_A = \mathcal O_A \cdot \alpha \oplus \mathcal O_A \cdot \beta$.

\subsubsection{A concrete description of foliation in Case \rm(C3.1)} \label{sec:descrip-C31}%
In this case, $X \cong A\times\PP^1/\mathcal F$ where $\mathcal F \subset \mathcal T_{A\times\PP^1}$ is a rank one foliation.
We shall give a concrete description of $\mathcal F$.

As $\mathcal F$ is reflexive and of rank one on a factorial variety, it is locally free (\cite[Proposition~1.9]{Hartshorne80}).
By Equation~(\ref{eq:9388}), we have $\mathcal F \cong \det\mathcal F \cong \mathrm{pr}_2^*\mathcal O_{\PP^1}(-i)$, where $i=1$ if $p=3$ and $i=2$ if $p=2$, therefore the inclusion $\mathcal F \subset  \mathcal T_{A\times\PP^1}$ is determined by a non-zero element (unique up to scaling)
  \begin{equation}\label{eq:O8Q8}
    \alpha_\mathcal F \in \mathrm{Hom}(\mathrm{pr}_2^*\mathcal O_{\mathbb P^1}(-i), \mathcal T_{A\times\PP^1}) \cong
    H^0( A\times\PP^1, \mathcal T_A(i) \oplus \mathrm{pr}_2^* \mathcal T_{\PP^1}(i) ),
%   H^0(A\times \mathbb P^1, \mathrm{pr}_2^*\mathcal O_{\mathbb P^1}(i)^{\oplus 2}\oplus \mathrm{pr}_2^*\mathcal T_{\mathbb P^1}(i))\cong H^0(\mathbb P^1, \mathcal O_{\mathbb P^1}(i)^{\oplus 2}\oplus \mathcal O_{\mathbb P^1}(i+2)).
  \end{equation}
  where $\mathcal T_A(i) := \mathrm{pr}_1^*\mathcal T_A \otimes \mathrm{pr}_2^*\mathcal O_{\PP^1}(i)$.
  Using the basis of $\mathcal O_{\PP^1}(i)$, $\mathcal T_{\PP^1}(i)$ and $\mathcal T_A$ given in Section~\ref{sec:nota-basis},
  the element $\alpha_{\mathcal F}$, over the subset $A \times \mathbb A^1_{(t)}$, can be written as
  \begin{equation}\label{eq:XAAM}
    D = u \alpha + v \beta + w\partial_t,
  \end{equation}
  where $u,v,w\in k[t]$ with $\deg u, \deg v\leq i$, $\deg w \leq i+2$.

%  \begin{equation}
%       \mathrm{Hom}(p_2^*\mathcal O_{\mathbb P^1}(-i), \mathcal T_A \oplus \mathcal T_{\mathbb P^1}) \cong
%    H^0(A\times \mathbb P^1, \mathcal T_A(i) \oplus \mathcal T_{\mathbb P^1}(i)) \ni      \alpha_{\mathcal F}.
%  \end{equation}
%Here $\mathcal T_A(i)$ and $\mathcal T_{\mathbb P^1}(i)$ denote the twisted sheaves of $\mathcal T_A$ and $\mathcal T_{\mathbb P^1}$ by $\mathrm{pr}_2^*\mathcal O_{\mathbb P^1}(i)$. We may write that $\mathcal T_A = \mathcal O_A \cdot \alpha \oplus \mathcal O_A \cdot \beta$, where $\alpha,\beta$ are global invariant vector fields. Then
%$$\alpha_{\mathcal F} = U\alpha +V \beta + W \in H^0(\mathcal T_A(i) \oplus \mathcal T_{\PP^1}(i)),$$
%where $U,V \in H^0(\mathbb P^1, \mathcal O_{\mathbb P^1}(i))$ and $W \in H^0(\mathbb P^1, \mathcal O_{\mathbb P^1}(i+2))$.

%In turn, concretely, we can write that
%$$\alpha_{\mathcal F}|_{A \times \mathbb A^1_{(t)}} = u \alpha + v \beta + w\partial_t,$$
%where $u,v,w\in k[t]$ with $\deg u, \deg v\leq i$ and $\deg w \leq i+2$.

Since $\mathcal F$ is saturated, we have $\gcd(u,v,w) = 1$, thus the ideal $(u,v,w) = k[t]$.
From this we conclude that on $A \times \mathbb A^1_{(t)}$, the quotient $\mathcal T_{A\times\PP^1}/\mathcal F$ is also locally free, thus the foliation $\mathcal F$ is smooth.
For the same reason, $\mathcal F$ is smooth on  $A \times \mathbb A^1_{(s)}$. To summarize, we have

\begin{prop}\label{non-red-I-smooth}
  The foliation $\mathcal F$ is smooth, and thus $X$ is a smooth variety.\qed
\end{prop}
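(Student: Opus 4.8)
The plan is to establish the proposition in two steps. First, I would confirm---using the explicit local presentation of $\mathcal F$ recorded above---that $\mathcal F$ is a \emph{subbundle} of $\mathcal T_{A\times\PP^1}$, equivalently that the quotient $\mathcal T_{A\times\PP^1}/\mathcal F$ is locally free, i.e.\ that $\mathcal F$ is a smooth foliation. Second, I would invoke the standard fact that a smooth height-one foliation on a smooth variety has a smooth quotient, which yields that $X\cong(A\times\PP^1)/\mathcal F$ is smooth.

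For the first step, I would work chart by chart. Over $A\times\AA^1_{(t)}$ the line bundle $\mathcal F$ is generated by $D = u\alpha + v\beta + w\partial_t$ as in (\ref{eq:XAAM}), with $u,v,w\in k[t]$; over $A\times\AA^1_{(s)}$ it is generated by $\tilde D = \tilde u\alpha + \tilde v\beta + \tilde w\,\tfrac{1}{s^i}\partial_s$ with $\tilde u = s^iu(1/s)$, $\tilde v = s^iv(1/s)$, $\tilde w = s^{i+2}w(1/s)$, and the degree bounds $\deg u,\deg v\le i$, $\deg w\le i+2$ are exactly what force $\tilde u,\tilde v,\tilde w$ to be polynomials in $s$. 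Because $\mathcal F$ is saturated in $\mathcal T_{A\times\PP^1}$, $\gcd(u,v,w)$ is a unit in the principal ideal domain $k[t]$, so $u,v,w$ have no common zero and $D$ is a nowhere-vanishing section of the vector bundle $\mathcal T_{A\times\PP^1}$ over $A\times\AA^1_{(t)}$. A nowhere-vanishing section of a vector bundle spans a locally split sub-line-bundle, so $\mathcal T_{A\times\PP^1}/\mathcal F$ is locally free there; the same reasoning over $A\times\AA^1_{(s)}$---where saturation again gives $\gcd(\tilde u,\tilde v,\tilde w) = 1$---completes the first step, since the two charts cover $A\times\PP^1$.

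For the second step, I would appeal to the structure theory of smooth $p$-closed foliations (cf.\ \cite{Eke87}): near any closed point $x$ of $A\times\PP^1$ one can choose local coordinates $x_1,x_2,x_3$ and a local generator $\partial$ of $\mathcal F$ with $\partial(x_1)=1$ (possible because $\partial$ does not vanish at $x$); the ring of $\partial$-constants is then a regular local ring, and $X$, being locally the spectrum of these rings of constants, is smooth. The step that needs genuine care---and which I view as the only real obstacle here---is precisely this last one: one must justify the local ``straightening'' of a smooth $p$-closed rank-one foliation and the regularity of its ring of constants, rather than merely asserting them. If one prefers to avoid the coordinate computation, an alternative is to feed the local freeness of $\mathcal T_{A\times\PP^1}/\mathcal F$ into Ekedahl's exact sequence (\ref{eq:YPC0}) attached to the height-one morphism $\pi_1\colon A\times\PP^1\to X$ and extract the regularity of $\mathcal{O}_X$ from it; I expect the coordinate argument to produce the cleaner write-up.
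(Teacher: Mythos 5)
Your proposal is correct and follows essentially the same route as the paper: the paper likewise deduces $\gcd(u,v,w)=1$ from saturation of $\mathcal F$, concludes chart by chart that the generator is nowhere vanishing so that $\mathcal T_{A\times\PP^1}/\mathcal F$ is locally free, and then invokes (without further elaboration) the standard fact that the quotient of a smooth variety by a smooth height-one foliation is smooth. Your extra discussion of how to justify that last standard fact goes beyond what the paper writes down, but does not change the argument.
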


%\paragraph{\it$\blacktriangleright$ The case $p=3$}
\paragraph{\underline{The case $p=3$}}
In this case, $\deg u(t)\le1$, $\deg v(t)\le1$, $\deg w(t)\le3$, and we can write
\[
 D= (a_1t + a_0)\alpha + (b_1t + b_0)\beta + (c_3 t^3 + c_2 t^2 + c_1 t + c_0)\partial_t.
\]
The rank one subsheaf $\mathcal F$ is a foliation if and only if $\mathcal F^p \subseteq \mathcal F$, which is equivalent to the condition $D^p = \lambda D$ for some $\lambda \in k[t]$.
By Proposition~\ref{prop:push-foliation}, the Albanese morphism $a_X\colon X\to A_X$ is inseparable if and only if
\[
  \Delta:= a_1b_0-a_0b_1\ne0 .
\]
Therefore, we can characterize the foliation $\mathcal F$ as the following:
\begin{itemize}
  \item[$(\clubsuit)$] $\Delta\ne0$, and $D^p = \lambda D$ for some $\lambda \in k[t]$.
\end{itemize}
By a direct calculation, we have \begin{equation*}\label{eq:T07C}
  \begin{aligned}
    D^3 =\bigl(u(t)\alpha + v(t) \beta + w(t) \partial_t\bigr)^3 = u^3 \alpha^3 + v^3 \beta^3 + ww'u'\alpha + ww'v'\beta+(ww'^2 + w^2w'') \partial_t.
  \end{aligned}
\end{equation*}
Then we can translate the condition ($\clubsuit$) into the following in each of the cases (i-iv) of $\mathop{\rm Lie} A$ in Section~\ref{sec:nota-basis}(2):

(i) $\Delta\ne0$ and $w'=0$.

\smallskip
(ii)
Invalid.

\smallskip
(iii)
$\displaystyle \Delta\ne0,\
c_1 c_3 - c_2^2 = {-b_0 a_1^3 \over \Delta},\
c_2 c_3 = {b_1 a_1^3 \over \Delta},\
c_0 c_2 - c_1^2 = {b_1 a_0^3\over \Delta},\
c_0 c_1 ={-b_0 a_0^3\over \Delta}.
$

\smallskip
(iv)
$\displaystyle \Delta\ne0,\
  c_1c_3 - c_2^2  = {a_0b_1^3 - b_0a_1^3\over \Delta},\
  c_2c_3 = {a_1^3b_1 - a_1b_1^3\over \Delta},\
  c_0 c_2 - c_1^2 = {a_0^3b_1 - b_0^3a_1\over \Delta},\
  c_0c_1 = {a_0b_0^3 - b_0a_0^3\over \Delta}.\
$

\medskip
%\paragraph{\it$\blacktriangleright$ The case $p=2$}
\paragraph{\underline{The case $p=2$}}
In this case, $\deg u(t)\le2$, $\deg v(t)\le2$, $\deg w(t)\le4$ and we can write \[
    D = (a_2 t^2 + a_1 t + a_0) \alpha + (b_2t^2 + b_1t + b_0) \beta + (c_4t^4 + c_3t^3 + c_2t^2 + c_1t + c_0) \partial_t,
\]
%on the piece $A\times\AA^1_{(t)}$; while on the other piece $A\times\AA^1_{(s)}$, we have \[
%  D = \frac{1}{s}\Bigl(\tilde u \alpha + \tilde v \beta + \tilde w \p_t\Bigr)
%      = \frac{1}{s^2}\Bigl((a_2 + a_1t + a_0t^2)\alpha + (b_2 + b_1t + b_0t^2)\beta + (c_4 + c_3 t + c_2 t^2 + c_1t^3 + c_0t^4)\partial_t\Bigr).
%\]
which satisfies the following conditions
\begin{itemize}
  \item[$(\spadesuit)$]
    $
    \begin{cases}
     \hbox{$D^p = \lambda D$ for some $\lambda \in k[t]$;}\\
     \hbox{one of $\Delta_{01}:= a_0b_1 + a_1b_0$, $\Delta_{12} := a_1b_2 + a_2b_1$ and $\Delta_{02} := a_0b_2 + a_2b_0$ is nonzero;}\\
     \hbox{$\gcd(u,v,w) = 1$ and $(a_2,b_2,c_4)\ne0$.}
    \end{cases}
    $
\end{itemize}
% We have
% \begin{align*}
%     D^2 = \bigl( u(t) \alpha + v(t) \beta + w(t) \partial_t \bigr)^2 = u^2 \alpha^2 + w u'\alpha + v^2 \beta^2 + w v'\beta + ww'\partial_t.
% \end{align*}
The conditions ($\spadesuit$) can be translated into the following in each of the cases (i-iv) of $\mathop{\rm Lie} A$ in Section~\ref{sec:nota-basis}(2):

\smallskip
(i) either (1) $w=0$, $(\Delta_{01},\Delta_{12},\Delta_{02})\ne0$, $\gcd(u,v) = 1\ne0$ and $(a_2,b_2)\ne0$; or
           (2) $u'=v'=w'=0$ and $\Delta_{02}\ne0$.

(ii)
$\gcd(u,v,w)=1$, $(a_2,b_2)\ne0$, $(\Delta_{01},\Delta_{12})\ne0$ and
\[\setbox\strutbox=\hbox{\vrule height15pt depth8pt width0pt}
\begin{tabular}{|c|ccccc|}
  \hline
   & $c_0$ & $c_1$ &$c_2$ &$c_3$ &$c_4$\\
  \hline
  \strut$\Delta_{01}\ne0, \Delta_{12}\ne0$ &
    {$a_0^3\over \Delta_{01}$} &{$a_0^2a_1\over\Delta_{01}$}&{${a_0a_1^2\over \Delta_{01}} = {a_2a_1^2 \over \Delta_{12}}$}&{$a_1a_2^2\over \Delta_{12}$}&{$a_2^3\over \Delta_{12}$}\\
  \hline
  \strut$b_1\ne0, a_1=0$, $a_0$ or $a_2=0$ & $a_0^2\over b_1$ &0&0&0& $a_2^2\over b_1$\\
  \hline
\end{tabular}
\]
By symmetric reason, we omit the case $\Delta_{01}=0$ and $\Delta_{12}\ne0$ in the following.

(iii)
$\gcd(u,v,w)=1$,  $(a_2,b_2)\ne0$, $(\Delta_{01},\Delta_{12})\ne0$ and
\[\setbox\strutbox=\hbox{\vrule height15pt depth8pt width0pt}
\begin{tabular}{|c|ccccc|}
  \hline
   & $c_0$ & $c_1$ &$c_2$ &$c_3$ &$c_4$\\
  \hline
  $\Delta_{01}\ne0, \Delta_{12}\ne0,\atop
    {b_1a_0 \Delta_{02}\Delta_{12} } + {a_2b_1 \Delta_{02}\Delta_{01}} + a_1b_1 \Delta_{01}\Delta_{12} = 0
  $ &
    \multirow{2}*{$b_0a_0^2\over \Delta_{01}$} &
    \multirow{2}*{$b_1a_0^2\over \Delta_{01}$} &
    ${b_0 a_0a_2 \over \Delta_{01}} + {a_0a_2b_2 \over \Delta_{12}} + a_1$  &
    {$b_1a_2^2\over \Delta_{12}$} &
    $b_2a_2^2\over \Delta_{12}$ \\
  $\Delta_{01}\ne0, \Delta_{12} = 0, \atop a_1b_1=a_2b_1=a_2b_2=0$ &
    &&
    ${b_0a_1^2\over \Delta_{01}} + {b_1a_0^2\Delta_{02}\over \Delta_{01}^2}$ &
    0&
    $b_0a_2^2\over \Delta_{01}$ \\
  \hline
\end{tabular}
\]

(iv)
$\gcd(u,v,w) = 1$, $(a_2,b_2)\ne0$, $(\Delta_{01},\Delta_{12})\ne0$, ${(a_1b_0^2 + b_1a_0^2)\Delta_{12}^2} + {(a_1b_2^2 + b_1a_2^2)\Delta_{01}^2} + (b_1a_1^2 + a_1b_1^2)\Delta_{01}\Delta_{12} =0$, and
\[\setbox\strutbox=\hbox{\vrule height15pt depth8pt width0pt}
\begin{tabular}{|c|c@{\hskip8pt}c@{\hskip8pt}c@{\hskip8pt}c@{\hskip8pt}c|}
  \hline
   & $c_0$ & $c_1$ &$c_2$ &$c_3$ &$c_4$\\
  \hline
  $\scriptstyle \Delta_{01}\ne0, \Delta_{12}\ne0, a_1\ne0$
  &
    \multirow{3}*{$a_0b_0^2 + b_0a_0^2\over \Delta_{01}$} &
    \multirow{3}*{$a_1b_0^2 + b_1a_0^2\over \Delta_{01}$} &
    ${a_2b_0^2 + a_2a_0 b_0\over \Delta_{01}} +  {a_0b_2^2 + a_0a_2b_2\over \Delta_{12}} + a_1$ &
    \multirow{2}*{$a_1b_2^2 + b_1a_2^2\over \Delta_{12}$} &
    \multirow{2}*{$a_2b_2^2 + b_2a_2^2\over \Delta_{12}$} \\
  $\scriptstyle \Delta_{01}\ne0, \Delta_{12}\ne0, b_1\ne0$
  & & &
    ${b_0a_0b_2 + a_0^2b_2\over \Delta_{01}} + {b_0b_2a_2 + b_0a_2^2\over \Delta_{12}} + b_1$  &
    & \\
  $\scriptstyle \Delta_{01}\ne0, \Delta_{12} = 0$ &
    &&
    ${(a_1b_0^2 + b_1a_0^2)\Delta_{02} + (a_0b_1^2 + b_0a_1^2)\Delta_{01} \over \Delta_{01}^2}$ &
    0&
    $(a_0b_2^2 + b_0a_2^2)\over\Delta_{01}$ \\
  \hline
\end{tabular}
\]

\begin{rmk}
For each valid case, it is easy to give examples.
For instance, if $p=2$, $\alpha^2=\alpha$ and $\beta^2=\beta$ (so we are in case (iv)), then
\newcommand\smallmatt[6]{\bigl({#1\atop#4}{#2\atop#5}{#3\atop#6}\bigr)}%
$\smallmatt{a_0}{a_1}{a_2}{b_0}{b_1}{b_2} = \smallmatt{1}{0}{0}{0}{1}{1}$ is a solution of $(\spadesuit)$ which corresponds to the following vector field \[
  D = \alpha + (t+t^2)\beta + (t + t^4) \p_t.
\]
Some special cases have appeared in the literature, see \cite{Moret-Bailly79}, \cite[Section~3]{Schroer04} and \cite[Example~14.1]{PZ19}.
\end{rmk}

\subsubsection{Examples of Case \rm(C3.2):}%$a_X$ being an inseparable fibration
\begin{examp}[of Case (C3.2a)]\label{exam:C32A}%
  We aim to find a free action of $G=\ZZ/2\ZZ$ on $A\times\PP^1$ and a foliation $\mathcal F$ on $X_0:=A\times\PP^1$ in a compatible way
  such that $\mathcal F$ descents to a foliation $\mathcal F_1$ on $A\times\PP^1/G$ and the quotient $X := (A\times\PP^1/G)/\mathcal F_1$ is an example of Case (C3.2a).

  Let $p=2$, and let $A$ be an ordinary abelian surface with $\alpha^p=\alpha$ and $\beta^p = \beta$ for some bases $\alpha,\beta\in\mathop{\rm Lie} A$.
  Let $G = \ZZ/2$ be the cyclic group of order $2$ with a generator $\sigma$.
  Let $P_0\in A$ be a nontrivial 2-torsion point.
  Then we can define an action of $G$ on $A\times\PP^1$ diagonally by 
  \[
    \sigma (P) = P+P_0 \hbox{ for each } P\in A,
    \hbox{ and } \sigma(t) = t+1  \hbox{ for each } [t:1]\in \PP^1.
  \]
  Let $X_1:= A\times\mathbb P^1/G$.
  Since the action of $G$ is free, $\pi_2\colon A\times\PP^1\to X_1$ is \'etale.
  Let $\mathcal F$ be the foliation on $A\times\mathbb P^1$ generated by
  \[
    \alpha + (t^2 + t)\beta + (t^4 + t)\partial_t.
  \]
  Let $Y_1=(A\times\mathbb P^1)/\mathcal F$.
  Since $K_{A\times\PP^1} = \mathrm{pr}_2^*K_{\PP^1}$ and $\det \mathcal F \cong \mathrm{pr}_2^*\mathcal O_{\mathbb P^1}(-2)$, we have $K_{Y_1}\equiv 0$.

  The foliation $\mathcal F$ is invariant under the action of $\sigma$, thus it descends to a foliation on $X_1$, denoted by $\mathcal F_1$.  Moreover, the action of $\sigma$ descends to an action $\sigma_1$ on $Y_1=X/\mathcal F$ (Lemma~\ref{lem:etaleLift}). Let
  $$X :=X_1/\mathcal F_1 \cong Y_1/\langle\sigma_1\rangle.$$
  By Remark~\ref{rmk:sep-foliation-lift}, we have the following commutative diagram: \[
    \vcenter{\xymatrix@R3ex@C3ex{
        X_0=A \times \PP^1 \ar[dd]\ar[dr]^{\mu}\ar[rr] && X_0/\mathcal F = Y_1\ar[dr]^{\mu'}\ar@{-->}[dd]|\hole^<<<<{a_{Y_1}} \\
                                               & X_1\ar[rr] \ar[dd]^<<<<<{a_{X_1}} && X_1/\mathcal F_1 = X \ar[dd]^{a_X}\\
      A \ar@{-->}[rr]|\hole \ar[rd]^{\tau}  && A^{(1)} \ar@{-->}[dr]^{\tau'}\\
                                                    & A_{X_1} \ar[rr]^{F} && A_{X_1}^{(1)} 
    }}
  \]
  where $\tau\colon A \to A_{X_1}=A/\langle\sigma\rangle$ and $\tau'$ are the étale morphisms, the left and the right squares are Cartesian and thus $\mu,\mu'$ are étale.
  As stated, we have $K_{Y_1} \equiv 0$, thus $K_{X} \equiv 0$.
  Moreover, $a_{Y_1}$ is inseparable, so is $a_X$.
\end{examp}

\begin{examp}[of Case (C3.2b)]\label{exam:two-Foliation}
  In this example, we construct a threefold $X$ by two consecutive quotients of $A\times\PP^1$ by certain foliations:
  \begin{equation}\label{eq:DFX3}
    \vcenter{\xymatrix{
      X_0 = A\times\mathbb P^1\ar[r]^>>>>>{\pi_0}\ar[d]^{f_0} & X_1 = X_0/\mathcal F_0\ar[r]^>>>>>{\pi_1}\ar[d]^{f_1} &X =X_2 = X_1/\mathcal F_1\ar[d]^{f_2}\\
      A\ar[r]^{\tau_0} &A_1\ar[r]^{\tau_1} & A_1^{(1)} \rlap{,}
    }}
  \end{equation}
   where $\tau_0$ is purely inseparable of degree $p$, $\tau_1$ is the relative Frobenius over $k$, and $f_i$ ($i=0,1,2$) are the Albanese morphisms respectively.
 %It will follow by our construction that $f_i$ ($i=0,1,2$) are the Albanese morphisms and that $\tau_0\colon A\to A_1$ has degree $p$.
  We work over a base field $k$ of characteristic $2$.

  Let $E$ and $E'$ be elliptic curves with the following defining equations:
  \begin{equation}\label{eq:64Q7}
  \begin{aligned}
    E  &: y^2 + y = x^3,\\
    E' &: y'^2 + x'y' = x'^3 + 1.
  \end{aligned}
  \end{equation}
  Note that $E$ is supersingular which admits a invariant vector field $\alpha\in H^0(E,\mathcal T_E)$ such that $\alpha^p = 0$;
  and $E'$ is ordinary which admits a invariant vector field $\beta\in H^0(E',\mathcal T_{E'})$ such that $\beta^p = \beta$.
  Concretely, we can take $\alpha$ to be the dual of $dx \in H^0(E, \Omega_E^1)$ such that $\alpha(dx) = \alpha(x)=1$ and $\alpha(y) = x^2$; and $\beta$ the dual of $dx'/x' \in H^0(E, \Omega_E^1)$, so $\beta(x') = x'$, $\beta(y')=y'+x'^2$ (see \cite[Section~III.1]{SilvermanAEC}).

  Let $A = E \times E'$.
  In the following we usually describe vector fields on the given affine pieces of $E$ and $E'$ by (\ref{eq:64Q7}).  The reader can check that the vector fields involved extend to the whole variety.

With notation of Section~\ref{sec:nota-basis}, let $X_0=A\times\PP^1$ and $Y_0=E\times\PP^1$  and let $D_0= \alpha + \partial_t \in  H^0(Y_0, \mathcal T_{Y_0})$. We may also regard $D_0$ as an element of $H^0(X_0, \mathcal T_{X_0}) \cong  H^0(Y_0, \mathcal T_{Y_0}) \oplus  H^0(E', \mathcal T_{E'})$.
Then $D_0^p=0$, and $D_0$ extends to a global vector field on $X_0=A\times\PP^1$, which is non-zero everywhere.
Let $\mathcal F_0$ and $\mathcal G_0$ be the foliations generated by $D_0$ on $X_0$ and $Y_0$ respectively.
Let $Y_1=Y_0/\mathcal G_0$ and $X_1=X_0/\mathcal F_0\cong Y_0\times E'$, which are both nonsingular.
By construction  $X_1$ is equipped with two fibrations $f_1\colon X_1 \to A_1= E^{(1)} \times E'$ and $g_1\colon X_1 \to B=\mathbb{P}^1$, which fit into the following commutative diagrams
  \[
    \xymatrix{
      X_0 = A\times\mathbb P^1\ar[r]^>>>>>{\pi_0}\ar[d]^{f_0=\mathrm{pr}_1} & X_1 = X_0/\mathcal F_0\ar[d]^{f_1} &  & X_0\ar[d]^{g_0=\mathrm{pr}_2}\ar[r]  &X_1 \ar[d]^{g_1}\\
      A\ar[r]^{\tau_0} &A_1 &  &\mathbb P^1 \ar[r]^<<<<<{F} &B= (\mathbb P^1)^{(1)} .
    }
  \]
Here we have an open covering $(\mathbb P^1)^{(1)} = \mathbb A^1_{(t^p) }\cup \AA^1_{(s^p)}$. Taking the corresponding inverse images of $g_0, g_1$ we obtain
the coverings
$$X_0 = A\times \PP^1 = (X_{0,t}:=A \times \AA^1_{(s)})\cup (X_{0,s}:=A \times \AA^1_{(s)}),~~X_1 = X_{1,t^p} \cup X_{1,s^p}~\mathrm{and}~Y_1 = Y_{1,t^p} \cup Y_{1,s^p}.$$
We may identify
$$\Gamma(Y_{1,t^p}, \mathcal{O}_{Y_{1,t^p}})= R=k[x_1=x^2,y_1=y^2,t_1=t^2, u=x+t]/(y_1^2 + y_1 - x_1^3, u^2-(x_1+t_1)),$$
then $\Omega_{R/k} = R\cdot d t_1 \bigoplus R\cdot du$. Let $\alpha_1 \in H^0(Y_{1,t^p}, \mathcal T_{Y_{1,t^p}})$ be determined by $\alpha_1(dt_1) = \alpha_1(dx_1) =t_1,   \alpha_1(du) = 0$. On the other piece $Y_{1,s^p}$, we have
$$
\displaylines{
  \Gamma(Y_{1,s^p}, \mathcal{O}_{Y_{1,s^p}})\cong k[x_1=x^2,y_1=y^2, s_1=s^2, v=s^2x+s]/(y_1^2 + y_1 - x_1^3, v^2 - (s_1^2x_1 + s_1)),\cr
  \Omega_{R'/k} = R' \cdot dx_1 \oplus R' d v,\;
  \hbox{and } \alpha_1(x_1)= 1/s_1, \alpha_1(v) = \alpha_1(s_1u) = v.
}
$$
We abuse notation $\alpha_1 \in H^0(X_1, \mathcal T_{X_0}) \cong  H^0(Y_1, \mathcal T_{Y_1}) \oplus  H^0(E', \mathcal T_{E'})$ for the lifting of $\alpha_1 \in H^0(Y_1, \mathcal T_{Y_1})$ and $\beta_1$ for the lifting of $\beta \in H^0(E',\mathcal T_{E'})$.

Set  $D_1 = \alpha_1 + \beta_1 \in H^0(X_1, \mathcal T_{X_1})$.
Clearly, $D_1^2 = D_1$, thus $D_1$ determines a rank one foliation $\mathcal F_1$ on $X_1$. More precisely, on $X_{1,t}$,
applying Jacobi criterion, we see that $(u,t_1,x')$ form a local coordinate for $X_{1,t^p}$ and
$$  D_1  = t_1  \frac{\p}{\p t_1} + x' \frac{\p}{\p x'}.$$
Since the vector field $D_1$ has no zeros on $X_{1,t}$, we have $\mathcal F_1|_{X_{1,t^p}} = \mathcal{O}_{X_{1,t^p}}\cdot D_1$. While on  $X_{1,s}$, the set of functions $\{ v,x_1,x' \}$ forms a local coordinate and correspondingly
$$D_1  = \frac1{s_1} \biggl(\frac{\p}{\p x_1} + s_1 v \frac{\p}{\p v} + s_1x' \frac{\p}{\p x'}\biggr).$$
We see that $\mathcal F_1|_{X_{1,s^p}} = \mathcal{O}_{X_{1,s^p}}\cdot s_1D_1$. Thus $\mathcal F_1 \sim g_1^* \mathcal{O}_{\mathbb{P}^1}(-1)$.

Let $X= X_1/\mathcal F_1$. By Proposition~\ref{prop:push-foliation}, the Albanese morphism of $X$ is $f\colon X \to A_1^{(1)}$, and therefore the diagram (\ref{eq:DFX3}) holds.
Finally $X$ is as required since \[
    \pi_0^*\pi_1^* K_{X}\sim
    \pi_0^*(K_{X_1} - (p-1)\det \mathcal F_1 ) \sim K_{X_0} + \mathrm{pr}_2^* \mathcal{O}_{\mathbb{P}^1}(-1) \sim 0.
  \]
\end{examp}

\subsubsection{Examples of case (C2): $a_X$ being a quasi-elliptic fibration}%
\begin{examp}[of (C2.1a)]\label{exam:quasi-ell-non-isotrivial}
  We give this example to show that, in the structure theorem \ref{thm:C-quasi-ell}, the second fibration $g\colon X\to \PP^1$ is not necessarily isotrivial.
  Recall that, in this case, the general fiber $C$ of the Albanese fibration $f\colon X\to A$ is a rational curve with a cusp.
  Therefore, there does not exists an isogeny $A'\to A$ and a diagonal group action of $G$ on $A'\times C$ such that $X \cong A'\times C/G$.
% In particular, $X$ is not a quotient of a product $A'\times C$ by a group, where $C$ is the rational curve with a cusp singular point.

 Assume $p = 2$.
  Let $E$ be an supersingular elliptic curve with $\alpha\in\mathop{\rm Lie} E$ a nonzero vector field.
  Let $E'$ be a copy of $E$ over which we use $\beta$ to denote the same vector field $\alpha$.
  Let $A = E\times E'$.
  Then $\{\alpha,\beta\}$ form a basis of $\mathop{\rm Lie} A$.
  Let $\mathcal F$ be the rank two foliation on $A\times  B(:=\PP^1)$, which is generated by
  $$
  \begin{cases}
    t^2 \alpha + \beta,\\     \beta + \p_t,
  \end{cases}
  $$
  on $A\times\mathbb A^1_{(t)}$ and  by $\alpha + s^2\beta,     \beta + s^2\p_s$  on $A\times\mathbb A^1_{(s)}$. We see that $\det \mathcal F= \mathrm{pr}_2^*\mathcal{O}_{\PP^1}(-2)$ on $A\times B$.  Let $X = A\times B/\mathcal F$. Then we have $K_X \simQ 0$.
  Thus, the generic fiber of $a_X\colon X\to A^{(1)}$ is of arithmetic genus one, so it is geometrically non-normal; and by Proposition~\ref{prop:push-foliation}, $a_X$ is separable, thus $a_X$ forms a quasi-elliptic fibration.
  Since $t\not\in\mathop{\rm Ann}(\mathcal F)$, we have a fibration $g\colon X\to B'= B^{(1)}\cong \PP^1$.
  For $b=[t^2:1] \in B'$, the fiber $X_b$ of $g$ is isomorphic to $A/(t^2\alpha + \beta)$. In particular, two general closed fibers of $g$ are not isomorphic to each other, which implies that $g\colon X\to \PP^1$ is not an isotrivial fibration.
\end{examp}

\begin{examp}[{of (C2.1b)}]
  Assume $p = 2$.  Let $A$ be an ordinary abelian surface.
  We take a basis $\alpha,\beta$ of $\mathop{\rm Lie} A$ such that $\alpha^2 = \alpha$, $\beta^2 = \beta$.
  Let $G = \langle \sigma \rangle \cong \ZZ/2$ be the cyclic group of order $2$ which acts on $A \times \PP^1$ as in Example~\ref{exam:C32A}:
  \[
    \sigma (P) = P+P_0 \hbox{ for each } P\in A,
    \hbox{ and } \sigma(t) = t+1  \hbox{ for each } [t:1]\in \PP^1,
  \]
  where $P_0\in A$ is a nontrivial 2-torsion point.
  Let $\mathcal F$ be the saturated subsheaf of $\mathcal T_{A\times \PP^1}$ generated by \[
  \begin{cases}
    \alpha + (t^{4} + t) \p_t,\\
    \alpha + \beta.
  \end{cases}
  \]
  The sheaf $\mathcal F$ is $p$-closed so it is a foliation. We have $\det \mathcal F \cong \mathrm{pr}_2^*\mathcal{O}_{\PP^1}(-2)$.
  Also, $\mathcal F$ is invariant under the action of $\sigma$, thus it descends to a foliation $\mathcal G$ on $Y = (A\times\PP^1)/\langle\sigma\rangle$; or equivalently, the action of $\sigma$ descends to an action on $ (A\times\PP^1)/\mathcal F$ (Lemma~\ref{lem:etaleLift}).
  Let $X= Y/ \mathcal G \cong (A\times\PP^1)/\mathcal F/\langle\sigma\rangle$. Then $K_X \equiv 0$.
  Finally, by Proposition~\ref{prop:push-foliation}, $a_X\colon X\to A^{(1)}$ is separable, thus $a_X$ forms a quasi-elliptic fibration; for which the non-smooth locus of $a_X$ is located at ${t=\infty}$.
  This gives an example of Case (C2.1b).
\end{examp}

% \subsection{Examples of case (B)}
% Let $S_1, S_2$ are two smooth projective $K$-trivial surfaces with  $q=1$. Assume that the Albanese morphisms $f_i\colon S_i \to E$ are mapped to the same elliptic curve. Take a fiber $F_i$ of $f_i$. Then all the fibers of $f_i$ are isomorphic $F_i$.

% If both $f_i$ are elliptic fibrations, then $X=S_1\times_ES_2$ is an example of (B), the Albanese morphism is $X\to E$ and has smooth fibers; if $f_1$ is an elliptic fibration and $f_2$ is a quasi-elliptic fibration, then $X=S_1\times_ES_2$ is an example of (B) and the Albanese morphism is $X\to E$ and the general fibers are singular.

\subsection{Effectivity of the pluricanonical maps of threefolds in case $q=2$}\label{sec:effectivity}%
%\begin{proof}[Proof of Corollary~\ref{cor:effectivity}]
\smallskip
{\it Proof of Corollary~\ref{cor:effectivity}.}
We discuss separately the cases (C1), (C2) and (C3) according to Theorem~\ref{thm:3fold}.

(C1) By the classification of Section~\ref{sec:explicite-of-C1}, the torsion order of $K_X$ is exactly the same as \cite[page~37]{BM77}; namely
\begin{align*}
    \hbox{the torsion order of $K_X$} & = 2,3,4,6 \hbox{ in cases a), b), c), d) and $p\ne 2,3$}\\
                          & = 1,3,1,3             \hbox{ in cases a), b), c), d) and $p =  2$}\\
                          & = 2,1,4,2             \hbox{ in cases a), b), c), d) and $p =  3$}.
\end{align*}

(C2) We shall use the following statement.

\begin{lem}\label{lem:norm-sec}
  Let $\pi\colon Y\to X$ be a finite dominant morphism of degree $d$ between normal varieties, and let $L$ be a Weil divisor on $X$. Assume $H^0(Y,\pi^*L) \neq 0$.
  Then
  \begin{itemize}
    \item[\rm(1)] We have $H^0(X, dL) \neq 0$; in particular, if $\pi^*L\sim 0$, then $dL \sim 0$.
    \item[\rm(2)] If $\pi$ is purely inseparable of height $r$, then $H^0(X, p^rL) \neq 0$.
  \end{itemize}
\end{lem}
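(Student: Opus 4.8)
The plan is to transport sections using the pushforward (norm) of Weil divisors along the finite morphism $\pi$, augmented by a Frobenius factorization for part~(2). I would first record the facts needed: since $X$ and $Y$ are normal, for every prime divisor $P$ on $X$ the local ring $\mathcal O_{X,P}$ is a discrete valuation ring whose integral closure in $K(Y)$ is the semilocalization of $Y$ at the finitely many height-one primes $Q$ over $P$, so that $\sum_{Q\mapsto P} e_Q f_Q = [K(Y):K(X)] = d$. Hence the divisorial pushforward $\pi_*\colon \mathrm{Div}(Y)\to\mathrm{Div}(X)$ satisfies $\pi_*\pi^* L = dL$ for every Weil divisor $L$ on $X$, carries effective divisors to effective ones, and sends $\mathrm{div}_Y(g)$ to $\mathrm{div}_X(N_{K(Y)/K(X)}(g))$ for $g\in K(Y)^\times$; these are standard facts about finite morphisms of normal varieties, and $\pi^*L$ is the usual pullback of the Weil divisor $L$, well defined since $\pi$ is finite and $X,Y$ are normal.

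For part~(1): a nonzero $s\in H^0(Y,\mathcal O_Y(\pi^*L))$, regarded inside the constant sheaf $\underline{K(Y)}$, corresponds to some $g\in K(Y)^\times$ with $\mathrm{div}_Y(g)+\pi^*L\geq 0$. Applying $\pi_*$ gives $\mathrm{div}_X(N_{K(Y)/K(X)}(g))+dL=\pi_*(\mathrm{div}_Y(g)+\pi^*L)\geq 0$, so $N_{K(Y)/K(X)}(g)$ defines a nonzero section of $\mathcal O_X(dL)$. If moreover $\pi^*L\sim 0$, I would choose $g$ with $\mathrm{div}_Y(g)=-\pi^*L$, whence $dL=\pi_*\pi^*L=\mathrm{div}_X(N_{K(Y)/K(X)}(g))\sim 0$.

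For part~(2): the exponent coming from~(1) is $d$, which in general exceeds $p^r$, so a finer argument is required. That $\pi$ is purely inseparable of height $r$ means $K(Y)^{p^r}\subseteq K(X)$, equivalently $K(X)\subseteq K(Y)\subseteq K(X)^{1/p^r}=K(X^{1/p^r})$. Since $X^{1/p^r}$ is normal and finite over $X$ with function field $K(X)^{1/p^r}$, it is the normalization of $Y$ in $K(X)^{1/p^r}$, so the $r$-fold absolute Frobenius factors as $F^r_X\colon X^{1/p^r}\xrightarrow{\ \rho\ }Y\xrightarrow{\ \pi\ }X$ with $\rho$ a finite, hence surjective, morphism. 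Pulling back a nonzero $s\in H^0(Y,\mathcal O_Y(\pi^*L))$ along the dominant $\rho$ yields a nonzero section of $\rho^*\mathcal O_Y(\pi^*L)$, hence of its reflexive hull $\mathcal O_{X^{1/p^r}}(\rho^*\pi^*L)=\mathcal O_{X^{1/p^r}}((F^r_X)^*L)$. Since $(F^r_X)^*L\sim p^rL$ and $X^{1/p^r}\cong X$ as abstract schemes (this isomorphism being the identity on underlying spaces and carrying $(F^r_X)^*L$ to $p^rL$ on $X$), we obtain $H^0(X,\mathcal O_X(p^rL))\neq 0$.

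\textbf{Expected main obstacle.} Nothing here is deep; the care lies in the bookkeeping: justifying $\pi_*\pi^*L=dL$ and the formula for $\pi_*\,\mathrm{div}_Y(g)$ for Weil divisors, and, in part~(2), the passage from the possibly non-reflexive sheaf $\rho^*\mathcal O_Y(\pi^*L)$ to its reflexive hull, together with tracking $(F^r_X)^*L$ through the non-$k$-linear isomorphism $X^{1/p^r}\cong X$. An alternative for~(2) is to argue purely with function fields — a chosen section gives $g\in K(Y)$ with $\mathrm{div}_Y(g)+\pi^*L\geq 0$ and $g^{p^r}\in K(X)$ — but the geometric version via $\rho$ and $F^r_X$ is cleaner to write down.
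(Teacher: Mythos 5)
Your proof is correct and takes essentially the same route as the paper's: part (1) via the norm $N_{K(Y)/K(X)}$ (the paper restricts to the regular locus and takes the norm of the section, you phrase it through the divisorial pushforward $\pi_*\pi^*L=dL$, which amounts to the same thing), and part (2) via the factorization of the $r$-th absolute Frobenius of $X$ through $\pi$. Your write-up merely supplies the bookkeeping the paper leaves implicit.
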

\begin{proof}
  (1) On a normal variety, the global sections of a reflexive sheaf are the same as the sections over a big open subset.  Thus, by restricting on the regular locus, we may assume that $X$ and $Y$ are regular.  Then given a section $s\in H^0(Y,\pi^*L)$, taking its norm gives a section of $H^0(X, dL)$.

  (2) If $\pi$ is purely inseparable of height $r$, then the $r$-th Frobenius of $X$ factors through $\pi$. So the assertion follows.
\end{proof}

\smallskip
We only show how to treat the case (2.2b), because the other cases are similar and easier. We have the following commutative diagram \[
  \xymatrix{
    \llap{$A_3\times\PP^1 \cong\;$} X_3 \ar[r]^<<<<<<{\pi_3} \ar[d] & X_1 = (X\times_A A_1)^\nu \ar[r]^<<<<<{\pi_1} \ar[d] & X \ar[d] \\
    A_3\ar[r] & A_1 = {A^{(-1)}} \ar[r]^<<<<<<<<F & A \rlap{\,,}
  }
\]
where $A_3\to A_1$ is a composition of three morphisms\[
  \xymatrix@C=18mm{
    A_3 \ar[r]^{\text{\'etale of}}_{\text{degree $p^r$}}
    & A_2 \ar[r]^{F^{2-r}} & A_2\ar[r]^{\text{inseparable}}_{\text{of degree $2$}}
    & A_1 ,
  }\quad r=0,1\text{ or }2.
\]
Thus $\pi_3$ is a composition of three morphisms in the similar way. Therefore, $X_3\to X$ is a composition of an \'etale morphism  of degree $p^r$ and an purely inseparable morphism of height $4-r$.

Since
\[
    \pi_3^*\pi_1^*K_X =  \pi_3^*(K_{X_1} + \mathcal C)
                      = \pi_3^*(K_{X_1/A_1} + \mathcal C)
                      = K_{X_3/A_3} + \pi_3^*\mathcal C \sim 0,
\]
we conclude that $2^{4}K_X\sim 0$ by Lemma~\ref{lem:norm-sec}.

We list the results as follows, leaving the detailed computation of other cases to the reader:
\[
\text{the torsion order of } K_X \text{ is a divisor of \,}
\begin{cases}
	(p+1)p & \text{in case (2.1a) for } p=2 \text{ or } 3, \\
	p^3 & \text{in case (2.1b) for } p=2 \text{ or } 3, \\
	 4\cdot 2^2 & \text{in case (2.2a) for } p=2, \\
	2^4 & \text{in case (2.2b) for } p=2.
\end{cases}
\]

(C3) Since only purely inseparable and \'etale base change are involved in Theorem~\ref{thm:C-nonreduced}, we see that $3K_X \sim 0$ if $p=3$ and $4K_X \sim 0$ if $p=2$.
\qed
%\end{proof}

\bibliographystyle{amsalpha}

\begin{thebibliography}{CHMS14}

\bibitem[B{\u a}d01]{BadescuAS}
Lucian B{\u a}descu.
\newblock {\em Algebraic surfaces}.
\newblock Universitext. Springer-Verlag, New York, 2001.
\newblock Translated from the 1981 Romanian original by Vladimir Ma\c {s}ek and revised by the author.

\bibitem[BdF07]{B-DF}
G.~Bagnera and M.~de~Franchis.
\newblock Sopra le superficie algebraich che hanno le coordinate del punto generico esprimibili con funzioni meromorfe quadruplamente periodiche di due parametri.
\newblock {\em Rom. Acc. L. Rend. (5)}, 16(1):492--498, 596--603, 1907.

\bibitem[Bea83a]{Beauville83Rmk}
Arnaud Beauville.
\newblock Some remarks on {K}\"ahler manifolds with {$c\sb{1}=0$}.
\newblock In {\em Classification of algebraic and analytic manifolds ({K}atata, 1982)}, volume~39 of {\em Progr. Math.}, pages 1--26. Birkh\"auser Boston, Boston, MA, 1983.

\bibitem[Bea83b]{Beau83}
Arnaud Beauville.
\newblock Vari\'et\'es {K\"ahleriennes} dont la premi\`ere classe de {Chern} est nulle.
\newblock {\em J. Differential Geom.}, 18(4):755--782 (1984), 1983.

\bibitem[Bea96]{Beau96}
Arnaud Beauville.
\newblock {\em Complex algebraic surfaces}, volume~34 of {\em London Mathematical Society Student Texts}.
\newblock Cambridge University Press, Cambridge, 2 edition, 1996.
\newblock Translated from the 1978 French original by R. Barlow, with assistance from N. I. Shepherd-Barron and M. Reid.

\bibitem[Bir16]{Bir16}
C.~Birkar.
\newblock Existence of flips and minimal models for 3-folds in char {$p$}.
\newblock {\em Ann. Sci. \'Ec. Norm. Sup\'er. (4)}, 49(1):169--212, 2016.

\bibitem[BM76]{BM76}
E.~Bombieri and D.~Mumford.
\newblock Enriques' classification of surfaces in char. $p$. {III}.
\newblock {\em Invent. Math.}, 35:197--232, 1976.

\bibitem[BM77]{BM77}
E.~Bombieri and D.~Mumford.
\newblock Enriques' classification of surfaces in char. {$p$}. {II}.
\newblock In {\em Complex analysis and algebraic geometry}, pages 23--42. Iwanami Shoten Publishers, Tokyo, 1977.

\bibitem[Bog74]{Bog75}
F.~A. Bogomolov.
\newblock K\"{a}hler manifolds with trivial canonical class.
\newblock {\em Izv. Akad. Nauk SSSR Ser. Mat.}, 38:11--21, 1974.

\bibitem[CHMS14]{CHMS14}
Paolo Cascini, Christopher Hacon, Mircea Musta{\c{t}}{\u{a}}, and Karl Schwede.
\newblock On the numerical dimension of pseudo-effective divisors in positive characteristic.
\newblock {\em Amer. J. Math.}, 136(6):1609--1628, 2014.

\bibitem[CWZ23]{CWZ23}
Jingshan Chen, Chongning Wang, and Lei Zhang.
\newblock On canonical bundle formula for fibrations of curves with arithmetic genus one in positive characteristic.
\newblock {\em arXiv:2308.08927v3}, 2023.

\bibitem[CZ15]{CZ15}
Yifei Chen and Lei Zhang.
\newblock The subadditivity of the {K}odaira dimension for fibrations of relative dimension one in positive characteristics.
\newblock {\em Math. Res. Lett.}, 22(3):675--696, 2015.

\bibitem[Das15]{Das15}
Omprokash Das.
\newblock On strongly {$F$-regular} inversion of adjunction.
\newblock {\em J. Algebra}, 434:207--226, 2015.

\bibitem[dJ96]{deJ96}
A.~J. de~Jong.
\newblock Smoothness, semi-stability and alterations.
\newblock {\em Publ. Math., Inst. Hautes {\'E}tud. Sci.}, 83:51--93, 1996.

\bibitem[Eji17]{Eji17}
Sho Ejiri.
\newblock Weak positivity theorem and {Frobenius} stable canonical rings of geometric generic fibers.
\newblock {\em J. Algebraic Geom.}, 26(4):691--734, 2017.

\bibitem[Eji19]{Eji19}
Sho Ejiri.
\newblock Positivity of anticanonical divisors and {$F$-purity} of fibers.
\newblock {\em Algebra Number Theory}, 13(9):2057--2080, 2019.

\bibitem[Eji23]{Eji23}
Sho Ejiri.
\newblock Splitting of algebraic fiber spaces with nef relative anti-canonical divisor and decomposition of {$F$-split} varieties.
\newblock {\em arXiv:2308.11145}, 2023.

\bibitem[Eji24]{Eji24}
Sho Ejiri.
\newblock Direct images of pluricanonical bundles and {Frobenius} stable canonical rings of fibers.
\newblock {\em Algebr. Geom.}, 11(1):71--110, 2024.

\bibitem[Eke87]{Eke87}
Torsten Ekedahl.
\newblock Foliations and inseparable morphisms.
\newblock In {\em Algebraic geometry, {B}owdoin, 1985 ({B}runswick, {M}aine, 1985)}, volume 46, Part 2 of {\em Proc. Sympos. Pure Math.}, pages 139--149. Amer. Math. Soc., Providence, RI, 1987.

\bibitem[Eke88]{Eke88}
Torsten Ekedahl.
\newblock Canonical models of surfaces of general type in positive characteristic.
\newblock {\em Inst. Hautes \'{E}tudes Sci. Publ. Math.}, 67:97--144, 1988.

\bibitem[EP23]{EP23}
Sho Ejiri and Zsolt Patakfalvi.
\newblock The {Demailly--Peternell--Schneider} conjecture is true in positive characteristic.
\newblock {\em arXiv:2305.02157}, 2023.

\bibitem[EvdGM]{E-vdG-M}
Bas Edixhoven, Gerard van~der Geer, and Ben Moonen.
\newblock Abelian varieties.
\newblock \url{https://gerard.vdgeer.net/AV.pdf}.
\newblock in preparation.

\bibitem[FL17]{Fulger-Lehmann-2017-Dual}
Mihai Fulger and Brian Lehmann.
\newblock Positive cones of dual cycle classes.
\newblock {\em Algebr. Geom.}, 4(1):1--28, 2017.

\bibitem[FS20]{FS20}
Andrea Fanelli and Stefan Schr\"{o}er.
\newblock Del {Pezzo} surfaces and {Mori} fiber spaces in positive characteristic.
\newblock {\em Trans. Amer. Math. Soc.}, 373(3):1775--1843, 2020.

\bibitem[Gro65]{EGAIV.2}
A.~Grothendieck.
\newblock \'{E}l\'{e}ments de g\'{e}om\'{e}trie alg\'{e}brique. iv. \'{E}tude locale des sch\'{e}mas et des morphismes de sch\'{e}mas. ii.
\newblock {\em Inst. Hautes \'{E}tudes Sci. Publ. Math.}, 24:231, 1965.

\bibitem[Har77]{Hartshorne-AG}
Robin Hartshorne.
\newblock {\em Algebraic geometry}.
\newblock Graduate Texts in Mathematics, No. 52. Springer-Verlag, New York-Heidelberg, 1977.

\bibitem[Har80]{Hartshorne80}
Robin Hartshorne.
\newblock Stable reflexive sheaves.
\newblock {\em Math. Ann.}, 254(2):121--176, 1980.

\bibitem[HPZ19]{HPZ19}
Christopher~D. Hacon, Zsolt Patakfalvi, and Lei Zhang.
\newblock Birational characterization of abelian varieties and ordinary abelian varieties in characteristic $p>0$.
\newblock {\em Duke Math. J.}, 168(9):1723--1736, 2019.

\bibitem[HW22]{HW22}
Christopher Hacon and Jakub Witaszek.
\newblock The minimal model program for threefolds in characteristic 5.
\newblock {\em Duke Math. J.}, 171(11):2193--2231, 2022.

\bibitem[HX15]{HX15}
Christopher~D. Hacon and Chenyang Xu.
\newblock On the three dimensional minimal model program in positive characteristic.
\newblock {\em J. Amer. Math. Soc.}, 28(3):711--744, 2015.

\bibitem[Iit82]{Iit82}
Shigeru Iitaka.
\newblock {\em Algebraic geometry}, volume~24 of {\em North-Holland Mathematical Library}.
\newblock Springer-Verlag, New York-Berlin, 1982.
\newblock An introduction to birational geometry of algebraic varieties.

\bibitem[JW21]{JW21}
Lena Ji and Joe Waldron.
\newblock Structure of geometrically non-reduced varieties.
\newblock {\em Trans. Amer. Math. Soc.}, 374(12):8333--8363, 2021.

\bibitem[Kaw85a]{Kaw85p}
Y.~Kawamata.
\newblock Pluricanonical systems on minimal algebraic varieties.
\newblock {\em Invent. Math.}, 79(3):567--588, 1985.

\bibitem[Kaw85b]{Kaw85}
Yujiro Kawamata.
\newblock Minimal models and the {K}odaira dimension of algebraic fiber spaces.
\newblock {\em J. Reine Angew. Math.}, 363:1--46, 1985.

\bibitem[Kaw86]{Kaw86}
Yujiro Kawamata.
\newblock On the plurigenera of minimal algebraic $3$-folds with {$K\mathop {\protect \rlap {\raise 3.25pt\hbox {$\sim $}}\protect \rlap {$\approx $}\protect \phantom {\approx }}0$}.
\newblock {\em Math. Ann.}, 275(4):539--546, 1986.

\bibitem[Kee99]{Keel99}
Se\'{a}n Keel.
\newblock Basepoint freeness for nef and big line bundles in positive characteristic.
\newblock {\em Ann. of Math. (2)}, 149(1):253--286, 1999.

\bibitem[{Kol}92]{Kol92}
{Koll{\'a}r, J{\'a}nos}, editor.
\newblock {\em Flips and abundance for algebraic threefolds}.
\newblock Soci\'{e}t\'{e} Math\'{e}matique de France, Paris, 1992.
\newblock Papers from the Second Summer Seminar on Algebraic Geometry held at the University of Utah, Salt Lake City, Utah, August 1991; Ast\'{e}risque No. 211 (1992).

\bibitem[Kol13]{Kollar-Sing}
J\'{a}nos Koll\'{a}r.
\newblock {\em Singularities of the minimal model program}, volume 200 of {\em Cambridge Tracts in Mathematics}.
\newblock Cambridge University Press, Cambridge, 2013.
\newblock With a collaboration of S\'{a}ndor Kov\'{a}cs.

\bibitem[Liu02]{LiuAGAC}
Qing Liu.
\newblock {\em Algebraic geometry and arithmetic curves}, volume~6 of {\em Oxford Graduate Texts in Mathematics}.
\newblock Oxford University Press, Oxford, 2002.
\newblock Translated from the French by Reinie Ern\'{e}; Oxford Science Publications.

\bibitem[LS77]{Lange-Stuhler77}
Herbert Lange and Ulrich Stuhler.
\newblock Vektorb\"{u}ndel auf {K}urven und {D}arstellungen der algebraischen {F}undamentalgruppe.
\newblock {\em Math. Z.}, 156(1):73--83, 1977.

\bibitem[Mat89]{MatsumuraCRT}
Hideyuki Matsumura.
\newblock {\em Commutative ring theory}, volume~8 of {\em Cambridge Studies in Advanced Mathematics}.
\newblock Cambridge University Press, Cambridge, 2 edition, 1989.
\newblock Translated from the Japanese by M. Reid.

\bibitem[MB79]{Moret-Bailly79}
Laurent Moret-Bailly.
\newblock Polarisations de degr\'{e} $4$ sur les surfaces ab\'{e}liennes.
\newblock {\em C. R. Acad. Sci. Paris S\'{e}r. A-B}, 289(16):A787--A790, 1979.

\bibitem[Mor86]{Morrison86}
David~R. Morrison.
\newblock A remark on {Kawamata}'s paper ``{On} the plurigenera of minimal algebraic 3-folds with {$K\mathop {\protect \rlap {\raise 3.25pt\hbox {$\sim $}}\protect \rlap {$\approx $}\protect \phantom {\approx }}0$}''.
\newblock {\em Math. Ann.}, 275:547--553, 1986.

\bibitem[MP97]{MP97}
Yoichi Miyaoka and Thomas Peternell.
\newblock {\em Geometry of higher dimensional algebraic varieties}, volume~26 of {\em DMV Semin.}
\newblock Basel: Birkh{\"a}user, 1997.

\bibitem[Mum66]{Mum66}
D.~Mumford.
\newblock {\em Lectures on curves on an algebraic surface}, volume~59 of {\em Ann. Math. Stud.}
\newblock Princeton University Press, Princeton, NJ, 1966.

\bibitem[Mum70]{MumfordAV}
David Mumford.
\newblock {\em Abelian varieties}, volume~5 of {\em Tata Institute of Fundamental Research Studies in Mathematics}.
\newblock Published for the Tata Institute of Fundamental Research, Bombay by Oxford University Press, London, 1970.

\bibitem[Nor83]{Nor83}
Madhav~V. Nori.
\newblock The fundamental group-scheme of an abelian variety.
\newblock {\em Math. Ann.}, 263(3):263--266, 1983.

\bibitem[Oda71]{Oda71}
Tadao Oda.
\newblock Vector bundles on an elliptic curve.
\newblock {\em Nagoya Math. J.}, 43:41--72, 1971.

\bibitem[Ogu93]{Oguiso93}
Keiji Oguiso.
\newblock A remark on the global indices of {${\protect \bf Q}$-Calabi-Yau $3$-folds}.
\newblock {\em Math. Proc. Cambridge Philos. Soc.}, 114(3):427--429, 1993.

\bibitem[Pat14]{Pat14}
Zsolt Patakfalvi.
\newblock Semi-positivity in positive characteristics.
\newblock {\em Ann. Sci. \'{E}c. Norm. Sup\'{e}r. (4)}, 47(5):991--1025, 2014.

\bibitem[Pos24]{Posva24}
Quentin Posva.
\newblock Resolution of 1-foliations singularities on surfaces and threefolds.
\newblock {\em arXiv:2405.05735v1}, 2024.

\bibitem[PW22]{PW22}
Zsolt Patakfalvi and Joe Waldron.
\newblock Singularities of general fibers and the {LMMP}.
\newblock {\em Amer. J. Math.}, 144(2):505--540, 2022.

\bibitem[PZ19]{PZ19}
Zsolt Patakfalvi and Maciej Zdanowicz.
\newblock On the {Beauville--Bogomolov} decomposition in characteristic $p\ge 0$.
\newblock {\em arXiv:1912.12742}, 2019.

\bibitem[Sch04]{Schroer04}
Stefan Schr\"{o}er.
\newblock Some {Calabi-Yau} threefolds with obstructed deformations over the {W}itt vectors.
\newblock {\em Compos. Math.}, 140(6):1579--1592, 2004.

\bibitem[Sch22]{Sch22}
Stefan Schröer.
\newblock The structure of regular genus-one curves over imperfect fields.
\newblock {\em arXiv:2211.04073v1}, 2022.

\bibitem[Sil09]{SilvermanAEC}
Joseph~H. Silverman.
\newblock {\em The arithmetic of elliptic curves}, volume 106 of {\em Graduate Texts in Mathematics}.
\newblock Springer, Dordrecht, 2 edition, 2009.

\bibitem[{Sta}]{stacks-project}
The {Stacks {P}roject authors}.
\newblock The {S}tacks {P}roject.
\newblock \url{https://stacks.math.columbia.edu}.

\bibitem[Tan21]{Tan21}
Hiromu Tanaka.
\newblock Invariants of algebraic varieties over imperfect fields.
\newblock {\em Tohoku Math. J. (2)}, 73(4):471--538, 2021.

\bibitem[Tat52]{Tate52}
John Tate.
\newblock Genus change in inseparable extensions of function fields.
\newblock {\em Proc. Am. Math. Soc.}, 3:400--406, 1952.

\bibitem[Zha19]{Zha19a}
Lei Zhang.
\newblock Subadditivity of {K}odaira dimensions for fibrations of three-folds in positive characteristics.
\newblock {\em Adv. Math.}, 354:106741, 29, 2019.

\bibitem[Zha20]{Zh20}
Lei Zhang.
\newblock Abundance for 3-folds with non-trivial {A}lbanese maps in positive characteristic.
\newblock {\em J. Eur. Math. Soc. (JEMS)}, 22(9):2777--2820, 2020.
\end{thebibliography}

\end{document}